\renewcommand{\p@enumii}{}
\def\@enum@{\list{\csname label\@enumctr\endcsname}%
{\usecounter{\@enumctr}\def\makelabel##1{
\normalfont\ignorespaces\emph{{##1}~}}
\setlength{\labelsep}{3pt}
\setlength{\parsep}{0pt}
\setlength{\itemsep}{0pt}
\setlength{\leftmargin}{0pt}
\setlength{\labelwidth}{0pt}
\setlength{\listparindent}{\parindent}
\setlength{\itemsep}{0pt}
\setlength{\itemindent}{0pt}
\topsep=3pt plus 1pt minus 1 pt}}
\renewcommand{\epsilon}{\ensuremath{\varepsilon}}
\renewcommand{\phi}{\ensuremath{\varphi}}
\renewcommand{\to}{\ensuremath{\longrightarrow}}
\renewcommand{\mapsto}{\ensuremath{\longmapsto}}
\newcommand{\R}{\ensuremath{\mathbb R}}
\newcommand{\N}{\ensuremath{\mathbb N}}
\newcommand{\Z}{\ensuremath{\mathbb Z}}
\newcommand{\sn}[1][n]{\ensuremath{S_{{#1}}}}
\newcommand{\an}[1][n]{\ensuremath{A_{{#1}}}}
\DeclareRobustCommand*{\up}[1]{\textsu{#1}}
\DeclareRobustCommand*{\up}[1]{\textsuperscript{#1}}
\renewcommand{\ker}[1]{\ensuremath{\operatorname{\text{Ker}}\left({#1}\right)}}
\newcommand{\im}[1]{\ensuremath{\operatorname{\text{Im}}\left({#1}\right)}}
\newcommand{\aut}[1]{\ensuremath{\operatorname{\text{Aut}}\left({#1}\right)}}
\newcommand{\aff}[1]{\ensuremath{\operatorname{\text{Aff}}\left({#1}\right)}}
\newcommand{\id}{\ensuremath{\operatorname{\text{Id}}}}
\newcommand{\bernoulli}[1][n]{\ensuremath{\mathcal{B}_{#1}}}
\def\@map#1#2[#3]{\mbox{$#1 \colon\thinspace #2 \to #3$}}
\def\map#1#2{\@ifnextchar [{\@map{#1}{#2}}{\@map{#1}{#2}[#2]}}
\newcommand{\brak}[1]{\ensuremath{\left\{ #1 \right\}}}
\newcommand{\ang}[1]{\ensuremath{\left\langle #1\right\rangle}}
\newcommand{\setangl}[2]{\ensuremath{\ang{\left. #1 \,\right\rvert \, #2}}}
\newcommand{\ord}[1]{\ensuremath{\left\lvert #1\right\rvert}}
\newcommand{\setl}[2]{\ensuremath{\brak{\left. #1 \,\right\rvert \, #2}}}
\newcommand{\lhra}{\lhook\joinrel\longrightarrow}
\newtheoremstyle{theoremm}{}{}{\itshape}{}{\scshape}{.}{ }{}
\theoremstyle{theoremm}
\newtheorem{thm}{Theorem}
\newtheorem{lem}[thm]{Lemma}
\newtheorem{prop}[thm]{Proposition}
\newtheorem{cor}[thm]{Corollary}
\newtheoremstyle{remark}{}{}{}{}{\scshape}{.}{ }{}
\theoremstyle{remark}
\newtheorem{defn}[thm]{Definition}
\newtheorem{rem}[thm]{Remark}
\newtheorem{rems}[thm]{Remarks}
\newtheoremstyle{comment}{}{}{\bfseries}{}{\bfseries}{:}{ }{}
\theoremstyle{comment}
\newcommand{\redef}[1]{Definition~\protect\ref{def:#1}}
\newcommand{\reth}[1]{Theorem~\protect\ref{th:#1}}
\newcommand{\relem}[1]{Lemma~\protect\ref{lem:#1}}
\newcommand{\repr}[1]{Proposition~\protect\ref{prop:#1}}
\newcommand{\reco}[1]{Corollary~\protect\ref{cor:#1}}
\newcommand{\resec}[1]{Section~\protect\ref{sec:#1}}
\newcommand{\rerem}[1]{Remark~\protect\ref{rem:#1}}
\newcommand{\rerems}[1]{Remarks~\protect\ref{rem:#1}}
\newcommand{\req}[1]{equation~(\protect\ref{eq:#1})}
\newcommand{\reqref}[1]{(\protect\ref{eq:#1})}
\newcommand{\comj}[1]{\noindent\textcolor{blue}{\textbf{!!!J!!!~#1}}}
\newcommand{\comdo}[1]{\noindent\textcolor{red}{\textbf{!!!DO!!!~#1}}}
\begin{document}

\title{Almost-crystallographic groups as quotients of Artin braid groups}

\author{DACIBERG~LIMA~GON\c{C}ALVES\\
Departamento de Matem\'atica - IME-USP,\\
Rua~do~Mat\~ao~1010~CEP:~05508-090  - S\~ao Paulo - SP - Brazil.\\
e-mail:~\url{dlgoncal@ime.usp.br}\vspace*{4mm}\\
JOHN~GUASCHI\\
Normandie Univ., UNICAEN, CNRS,\\
Laboratoire de Math\'ematiques Nicolas Oresme UMR CNRS~\textup{6139},\\
CS 14032, 14032 Cedex Cedex 5, France.\\
e-mail:~\url{john.guaschi@unicaen.fr}\vspace*{4mm}\\
OSCAR~OCAMPO~\\
Universidade Federal da Bahia,\\
Departamento de Matem\'atica - IME,\\
Av. Adhemar de Barros~S/N~CEP:~40170-110 - Salvador - BA - Brazil.\\
e-mail:~\url{oscaro@ufba.br}
}

\date{10th November 2017}

\maketitle

\begin{abstract}
\noindent
\emph{Let $n,k\geq 3$. In this paper, we analyse the quotient group $B_n/\Gamma_k(P_n)$ of the Artin braid group $B_{n}$ by the subgroup $\Gamma_k(P_n)$ belonging to the lower central series of the Artin pure braid group $P_{n}$. We prove that it is an almost-crystallographic group. We then focus more specifically on the case $k=3$.
If $n\geq 5$, and $if \tau\in \N$ is such that $\gcd(\tau,6)=1$, we show that $B_n/\Gamma_3(P_n)$ possesses torsion $\tau$ if and only if $\sn$ does,  
and we  prove that there is a one-to-one correspondence between the conjugacy classes of elements of order $\tau$ in $B_n/\Gamma_3(P_n)$ with those of elements of order $\tau$ in the symmetric group $\sn$. We also exhibit a presentation for the almost-crystallographic group $B_n/\Gamma_3(P_n)$.  
Finally, we obtain some $4$-dimensional almost-Bieberbach subgroups of $B_3/\Gamma_3(P_3)$, we explain how to obtain almost-Bieberbach subgroups of $B_4/\Gamma_3(P_4)$ and $B_3/\Gamma_4(P_3)$, and we exhibit explicit elements of order $5$ in $B_5/\Gamma_3(P_5)$.}
 \end{abstract}

\section{Introduction}



In this paper, we continue our study of quotients of the Artin braid group $B_{n}$ by elements of the lower central series
$(\Gamma_k(P_n))_{k\in \N}$ of the Artin pure braid group $P_n$.  In the paper~\cite{GGO}, we analysed the group $B_n/\Gamma_2(P_n)$ in some detail, and we proved notably that it is a crystallographic group. Using different techniques, I.~Marin generalised the results of~\cite{GGO} to generalised braid groups associated to arbitrary complex reflection groups~\cite{Ma}. In the present paper, we show for all $n, k\geq3$, the quotient $B_n/\Gamma_k(P_n)$ of $B_{n}$ by $\Gamma_k(P_n)$ is an almost-crystallographic group, and we investigate more thoroughly the group $B_n/\Gamma_3(P_n)$.
As in~\cite{GGO}, some natural questions that arise are the existence or not of torsion, the realisation of elements of finite order and that of finite subgroups, their conjugacy classes and the relation with other types of group, such as (almost-) crystallographic groups.



This paper is organised as follows. In \resec{prelim}, we recall some definitions and facts about the Artin braid groups, their quotients by the elements of the lower central series $(\Gamma_{k}(P_{n}))_{k\in \N}$ of $P_n$, and almost-crystallographic groups. In \resec{acgroups}, we discuss the quotient $B_n/\Gamma_k(P_n)$, where $n,k\geq 3$. If $G$ is a group, for all $q\in \N$, let $L_q(G)$ denote the lower central series quotient $\Gamma_q(G)/\Gamma_{q+1}(G)$. These quotients have been widely studied, see~\cite{Hal,MKS} for example. In the case of $P_n$, it is known that $L_q(P_n)$ is a free Abelian group of finite rank, and by~\cite[Theorem~4.6]{LVW}, its rank is given by:
\begin{equation}\label{eq:ranklvw}
\operatorname{rank}(L_q(P_n))=\frac{1}{q}\sum_{j=1}^{n-1}\sum_{d\mid q}\,\mu(d)j^{q/d} 
\end{equation}
where $\mu$ is the M\"obius function. From this, it follows that the nilpotent group $P_n/\Gamma_k(P_n)$ of nilpotency class $k-1$ is also torsion free (see \relem{ptorsion}(\ref{it:ptorsiona})). Using~\reqref{ranklvw}, in \repr{nilmanifold}, we calculate the \emph{Hirsch length} of $P_n/\Gamma_k(P_n)$, which is equal to $\sum_{i=1}^{k-1}\operatorname{rank}(L_i(P_n))$. In particular, the Hirsch length of $P_n/\Gamma_3(P_n)$ and $P_n/\Gamma_4(P_n)$ is equal to $\binom{n}{2} + \binom{n}{3}$ and $\binom{n}{2} + \binom{n}{3} + 2 \binom{n+1}{4}$ respectively (see~\cite[Theorem~1.1]{CS}).
Using a criterion given in~\cite{Dekimpe}, we are then able to show that $B_n/\Gamma_k(P_n)$ is an almost-crystallographic group. 

\begin{thm}\label{th:acgroups}
Let $n,k\geq 3$. The group $B_n/\Gamma_k(P_n)$ is an almost-crystallographic group whose holonomy group is $\sn$ and whose dimension is equal to $\sum_{q=1}^{k-1}\left( \frac{1}{q}\sum_{j=1}^{n-1}\sum_{d\mid q}\mu(d)j^{q/d} \right)$. In particular, the dimension of $B_n/\Gamma_3(P_n)$ (resp.\ of $B_n/\Gamma_4(P_n)$) is equal to $\binom{n}{2} + \binom{n}{3}$  (resp.\ to $\binom{n}{2} + \binom{n}{3} + 2 \binom{n+1}{4}$).
\end{thm}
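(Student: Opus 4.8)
The plan is to realise $B_n/\Gamma_k(P_n)$ as an extension of the finite group $\sn$ by a finitely-generated torsion-free nilpotent group, and then to invoke the criterion of~\cite{Dekimpe}. First I would note that $\Gamma_k(P_n)$, being a term of the lower central series of $P_n$, is characteristic in $P_n$ and hence normal in $B_n$ (as $P_n\trianglelefteq B_n$). Quotienting the short exact sequence $1\to P_n\to B_n\to \sn\to 1$ by $\Gamma_k(P_n)$ therefore yields
\begin{equation*}
1\longrightarrow N \longrightarrow E \longrightarrow \sn\longrightarrow 1,
\end{equation*}
where I write $N=P_n/\Gamma_k(P_n)$ and $E=B_n/\Gamma_k(P_n)$. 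The group $N$ is nilpotent of class $k-1$, and is finitely generated and torsion free by \relem{ptorsion}; moreover $[E:N]=n!<\infty$. To conclude that $E$ is almost-crystallographic with translation subgroup $N$ and holonomy group $\sn$, the criterion of~\cite{Dekimpe} reduces the whole matter to the single centraliser condition $C_E(N)\subseteq N$, for then $C_E(N)=C_E(N)\cap N=Z(N)$ and the holonomy group is $E/N\cong\sn$.

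The heart of the argument, and the step I expect to be the main obstacle, is this centraliser condition, which I would attack via the conjugation action of $E$ on the abelianisation $N^{\mathrm{ab}}$. Since $\Gamma_k(P_n)\subseteq\Gamma_2(P_n)$, one has $[N,N]=\Gamma_2(P_n)/\Gamma_k(P_n)$ and hence a canonical identification $N^{\mathrm{ab}}\cong P_n/\Gamma_2(P_n)=L_1(P_n)$, the free Abelian group of rank $\binom{n}{2}$ on the classes of the standard generators $A_{i,j}$ of $P_n$. As $P_n$ acts trivially on its own abelianisation, the conjugation action of $E$ on $N^{\mathrm{ab}}$ factors through $\sn$, and by~\cite{GGO} it is the permutation representation $\sigma\cdot\overline{A_{i,j}}=\overline{A_{\sigma(i),\sigma(j)}}$. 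Now let $g\in C_E(N)$ with image $\sigma\in\sn$: since $g$ acts trivially on $N$, \emph{a fortiori} on $N^{\mathrm{ab}}$, the permutation $\sigma$ fixes every generator $\overline{A_{i,j}}$, that is, $\sigma$ fixes every unordered pair $\{i,j\}$. For $n\geq 3$ the action of $\sn$ on unordered pairs is faithful (if $\sigma(a)=b\neq a$, choosing $c\notin\{a,b\}$ gives $\sigma(\{a,c\})\neq\{a,c\}$), so $\sigma=\mathrm{id}$ and $g\in N$; being in $N$ and centralising $N$, it lies in $Z(N)$. This establishes $C_E(N)=Z(N)\subseteq N$.

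Having verified the hypotheses, \cite{Dekimpe} immediately gives that $E=B_n/\Gamma_k(P_n)$ is almost-crystallographic with holonomy group $\sn$. Its dimension equals the Hirsch length of the translation subgroup $N$, which by \repr{nilmanifold} is $\sum_{q=1}^{k-1}\operatorname{rank}(L_q(P_n))$; substituting the rank formula \reqref{ranklvw} yields the stated closed form. Finally, for $k=3$ and $k=4$ I would insert the values $\operatorname{rank}(L_1(P_n))=\binom{n}{2}$, $\operatorname{rank}(L_2(P_n))=\binom{n}{3}$ and $\operatorname{rank}(L_3(P_n))=2\binom{n+1}{4}$ from~\cite{CS}, giving the dimensions $\binom{n}{2}+\binom{n}{3}$ and $\binom{n}{2}+\binom{n}{3}+2\binom{n+1}{4}$ respectively.
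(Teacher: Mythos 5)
Your proof is correct, but it reaches the key hypothesis of Dekimpe's criterion by a genuinely different route. The paper applies \reth{dekimpe} in the form ``no non-trivial finite normal subgroups'' and argues by contradiction: a finite normal subgroup $H$ of $B_n/\Gamma_k(P_n)$ is pushed down, via \relem{ptorsion}(\ref{it:ptorsionb}), to isomorphic normal subgroups of $B_n/\Gamma_2(P_n)$ and of $\sn$; the latter forces $H$ to be $\sn$, $\an$, or (for $n=4$) $\Z_{2}\oplus\Z_{2}$, and these are eliminated using the absence of $2$-torsion in $B_n/\Gamma_2(P_n)$ together with, in the residual case $n=3$ and $H\cong\Z_{3}$, the conjugacy and the infinitude of the order-$3$ elements of $B_3/\Gamma_2(P_3)$ (\cite[Theorems~2 and~5, Proposition~21]{GGO}). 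You instead verify the self-centralising condition $C_E(N)\subseteq N$ for $N=P_n/\Gamma_k(P_n)$ directly, by observing that the conjugation action of $E$ on $N^{\mathrm{ab}}\cong P_n/\Gamma_2(P_n)$ factors through the permutation action of $\sn$ on unordered pairs (\repr{prop12}), which is faithful for $n\geq 3$. This is more economical: it uses only the torsion-freeness of $N$ and \repr{prop12}, avoids the case analysis on the normal subgroups of $\sn$ and the torsion and conjugacy theorems of~\cite{GGO}, and has the added merit of making the faithfulness of the holonomy representation (and hence the identification of the holonomy group with $\sn$) explicit rather than implicit. The one point to tidy up is the interface with \reth{dekimpe} as stated in the paper, which asks for the absence of non-trivial finite normal subgroups rather than for a self-centralising lattice: either check that the centraliser condition figures among the equivalences of \cite[Theorem~3.1.3]{Dekimpe}, or insert the one-line deduction that if $F$ is a finite normal subgroup of $E$ then $F\cap N=1$ by torsion-freeness of $N$, whence $[F,N]\subseteq F\cap N=1$, so $F\subseteq C_E(N)\subseteq N$ and $F=1$. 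The computation of the dimension as the Hirsch length of $P_n/\Gamma_k(P_n)$ via \repr{nilmanifold} and~\reqref{ranklvw} is identical in the two proofs.
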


 Torsion-free almost-crystallographic groups, or \emph{almost-Bieberbach} groups, are of particular interest because they arise as fundamental groups of infra-nilmanifolds. Infra-nilmanifolds are manifolds that are finitely covered by a nilmanifold and represent a natural generalisation of flat manifolds. They play an important r\^{o}le in dynamical systems, notably in the  study of expanding maps and Anosov diffeomorphisms~\cite{DD}. The reader may consult~\cite{Dekimpe,Gromov,Ruh} for more information about these topics.


Another interesting problem is that of the nature of the finite-order elements of $B_n/\Gamma_k(P_n)$. Knowledge of the torsion of this group may be used for example to construct almost-Bieberbach subgroups. In this direction, we prove \reth{almostb}, which generalises~\cite[Theorem~2]{GGO}.


\begin{thm}\label{th:almostb}
Let $n,k\geq 3$. Then the quotient group $B_n/\Gamma_k(P_n)$ has no elements of order $2$ nor of order $3$.
\end{thm}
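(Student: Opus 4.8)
The plan is to reduce first to the case $k=3$, and then to treat the orders $2$ and $3$ separately, the latter being the genuinely delicate one. For $k\geq 3$ the canonical projection $p\colon B_n/\Gamma_k(P_n)\to B_n/\Gamma_3(P_n)$ has kernel $\Gamma_3(P_n)/\Gamma_k(P_n)$, a subgroup of the torsion-free nilpotent group $P_n/\Gamma_k(P_n)$ (see \relem{ptorsion}) and hence itself torsion free. Thus if $g\in B_n/\Gamma_k(P_n)$ had order $2$ or $3$ then $p(g)$ would have order dividing $2$ or $3$; granting the statement for $k=3$ we get $p(g)=1$, so $g$ lies in the torsion-free kernel and is trivial, a contradiction. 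So assume $k=3$ and use the extension $1\to P_n/\Gamma_3(P_n)\to B_n/\Gamma_3(P_n)\xrightarrow{\pi}\sn\to 1$, whose kernel is torsion free. If $g$ has order $m\in\{2,3\}$ then $\theta:=\pi(g)$ has order dividing $m$, and $\theta\neq 1$ (else $g$ lies in the torsion-free kernel). Since conjugate elements have equal order, after conjugating by a lift of a suitable permutation we may assume $\theta$ is a product of disjoint transpositions ($m=2$) or of disjoint $3$-cycles ($m=3$) on the first blocks of indices. Writing $\sigma_1,\dots,\sigma_{n-1}$ for the Artin generators, $A_{i,j}$ for the standard generators of $P_n$, and fixing the lift $s=\sigma_1\sigma_3\cdots$ (resp.\ $s=\sigma_1\sigma_2\,\sigma_4\sigma_5\cdots$), we write $g=\alpha\,\overline{s}$ with $\alpha\in P_n/\Gamma_3(P_n)$ and $\overline s$ the image of $s$. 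As $\pi(g^m)=1$, the relation $g^m=1$ takes place in $P_n/\Gamma_3(P_n)$.

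For order $2$ we have $g^2=\alpha\cdot{}^{s}\alpha\cdot s^2=1$, with $s^2=A_{1,2}A_{3,4}\cdots$ since $\sigma_i^2=A_{i,i+1}$. Projecting onto the abelianisation $L_1(P_n)=P_n/\Gamma_2(P_n)\cong\Z^{\binom n2}$, on whose natural basis $\{e_{i,j}\}$ the group \sn\ acts by the permutation $\theta_\ast$ induced by $\theta$, the relation becomes $(\id+\theta_\ast)\,\overline{\alpha}=-\sum_i e_{2i-1,2i}$. Each vector $e_{2i-1,2i}$ is $\theta_\ast$-fixed (as $\theta$ preserves the pair $\{2i-1,2i\}$), and on the $\theta_\ast$-fixed subspace $\id+\theta_\ast$ equals multiplication by $2$. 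Comparing the $e_{1,2}$-coordinates gives $2c=-1$ for some $c\in\Z$, which is impossible; hence there is no element of order $2$.

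For order $3$ we have $g^3=\alpha\cdot{}^{s}\alpha\cdot{}^{s^2}\alpha\cdot s^3=1$, where $s^3$ is the product of the full twists on the successive $3$-element blocks. The abelianised relation $(\id+\theta_\ast+\theta_\ast^2)\overline\alpha=-\overline{s^3}$ is now \emph{solvable} over $\Z$: on the block $\{1,2,3\}$ the vectors $e_{1,2},e_{1,3},e_{2,3}$ form a single $\theta_\ast$-orbit and $\id+\theta_\ast+\theta_\ast^2$ sends each to their sum. So the first layer does not obstruct order $3$, and one must descend to the second layer $L_2(P_n)=\Gamma_2(P_n)/\Gamma_3(P_n)$. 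To isolate the obstruction I would apply the retraction $\rho\colon P_n/\Gamma_3(P_n)\to P_3/\Gamma_3(P_3)$ induced by forgetting every strand outside the first block $\{1,2,3\}$. Since $\theta$ fixes the forgotten strands, $\rho$ intertwines conjugation by $s$ with conjugation by $s_0:=\sigma_1\sigma_2$ (a compatibility that must be checked as a lemma), the full twists on the other blocks die under $\rho$, and $\rho(s^3)$ is exactly the full twist $\Delta\in P_3$. Applying $\rho$ to $g^3=1$ yields $\rho(\alpha)\cdot{}^{s_0}\rho(\alpha)\cdot{}^{s_0^2}\rho(\alpha)\cdot\Delta=1$ in $P_3/\Gamma_3(P_3)$.

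It remains to show this last equation has no solution. In $P_3/\Gamma_3(P_3)\cong\bigl(F_2/\Gamma_3(F_2)\bigr)\times\Z$, where $F_2$ is free of rank $2$, the second layer $L_2(P_3)$ is infinite cyclic and, as a direct commutator computation with the alternating form on $L_1(P_3)$ shows, $\theta$ acts trivially on it. Hence the central part $w\in L_2(P_3)$ of $\rho(\alpha)$ contributes $w+w+w=3w$ to the second-layer coordinate of the left-hand side, while $\Delta$ forces that coordinate to equal a fixed value $\kappa$, so the problem reduces to the single congruence $3w+\kappa\equiv 0$; the claim is that $\kappa\not\equiv 0\pmod 3$, so that no integer $w$ works. \textbf{The main obstacle} is precisely this mod-$3$ computation: one must evaluate the second-layer contribution of $\rho(\alpha)\,{}^{s_0}\rho(\alpha)\,{}^{s_0^2}\rho(\alpha)$ together with that of $\Delta$, using the explicit commutator cocycle of $P_3/\Gamma_3(P_3)$ and the action of $s_0$, and verify that the residue $\kappa\bmod 3$ is nonzero and independent of the choice of $\overline\alpha$ within its coset modulo $\ker(\id+\theta_\ast+\theta_\ast^2)$. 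Granting this, no element of order $3$ exists, completing the proof.
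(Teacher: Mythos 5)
Your reduction to $k=3$ and your order-$2$ argument are both fine (the latter essentially re-proves \cite[Theorem~2]{GGO} at the level of $P_n/\Gamma_2(P_n)$; the paper instead just cites that result and feeds it into \relem{ptorsion}(\ref{it:ptorsionb})). The gap is in the order-$3$ case. You have correctly located the obstruction in the second layer $\Gamma_2(P_3)/\Gamma_3(P_3)\cong\Z$ after retracting to the first $3$-cycle block, but the two statements that would actually finish the proof are exactly the ones you leave unverified: (i) that the constant $\kappa$ is nonzero modulo $3$, and (ii) that $\kappa\bmod 3$ does not depend on the choice of $\overline{\alpha}$ modulo the kernel of $\id+\theta_{\ast}+\theta_{\ast}^{2}$ (the cross-terms produced when collecting $\rho(\alpha)\,{}^{s_0}\rho(\alpha)\,{}^{s_0^2}\rho(\alpha)$ into normal form land in the second layer, so $\kappa$ genuinely depends a priori on the first-layer part of $\rho(\alpha)$). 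Point (i) is not a routine check that may be granted: it is the whole content of the theorem for the prime $3$, and the analogous quantity for primes $\geq 5$ \emph{does} vanish, which is why $B_n/\Gamma_3(P_n)$ has plenty of $5$- and $7$-torsion (\reth{ntorsion}). As written, you have reduced the theorem to an unproved arithmetic claim rather than proved it.

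For comparison, the paper kills both issues at once by normalising more than just the permutation: using \cite[Theorem~5]{GGO} and \cite[Proposition~29]{GGO}, it conjugates the putative order-$3$ element $\rho$ inside $B_n/\Gamma_3(P_n)$ so that its image in $B_n/\Gamma_2(P_n)$ becomes the explicit conjugacy-class representative $\eta_t=\delta_{3t,3}\cdots\delta_{3,3}\delta_{0,3}$. After this the entire first-layer ambiguity is absorbed into the conjugation, the remaining unknown $\theta$ lies in the central subgroup $\Gamma_2(P_n)/\Gamma_3(P_n)$, and the single computation $\delta_{3i,3}^{3}=[\sigma_{3i+1}^{2},\sigma_{3i+2}^{2}]^{-1}=\alpha_{3i+1,3i+2,3i+3}^{-1}$ turns the comparison of the coefficient of $\alpha_{1,2,3}$ in $\rho^{3}=1$ into the impossible equation $3l=1$. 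To complete your route you would either have to carry out the corresponding normal-form computation of $\bigl(A_{1,2}^{a}A_{1,3}^{b}A_{2,3}^{c}\,\sigma_1\sigma_2\bigr)^{3}$ in $P_3/\Gamma_3(P_3)$ modulo $3$ for all $a+b+c=-1$, or first perform the same conjugacy normalisation in $B_n/\Gamma_2(P_n)$ so that only the central part of $\alpha$ survives.
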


From this, we are able to deduce \reco{almostb}, which proves the existence of almost-Bieberbach groups in $B_n/\Gamma_k(P_n)$ for all $n,k\geq 3$. In much of the rest of the paper, we focus our attention on the case $k=3$. To study the torsion of $B_n/\Gamma_3(P_n)$, an explicit basis of $\Gamma_2(P_n)/\Gamma_3(P_n)$ is introduced in \resec{bngammakpn}, and in \resec{partition}, we partition this basis into the orbits of the action by conjugation of a certain element $\delta_{n}$ of $B_n/\Gamma_3(P_n)$. In \resec{presgamma3}, we exhibit presentations of $P_n/\Gamma_3(P_n)$ and $B_n/\Gamma_3(P_n)$. This enables us to study the finite-order elements of $B_n/\Gamma_3(P_n)$ and their conjugacy classes in \resec{torsion2}. The following result shows that the torsion of $B_n/\Gamma_3(P_n)$ coincides with that of $\sn$ if we remove the elements whose order is divisible by $2$ or $3$.

\begin{thm}\label{th:torsion}
Let $n\geq 5$, and let $\tau\in \N$ be such that $\gcd(\tau,6)=1$. Then the group $B_n/\Gamma_3(P_n)$ admit finite-order elements of torsion $\tau$ if and only if $\sn$ does. Further, if $x\in \sn$ is of order $\tau$, there exists $\alpha\in \sn$ of order $\tau$ such that $\overline{\sigma}(\alpha)=x$, in particular $\overline{\sigma}(\alpha)$ and $x$ have the same cycle type. 
\end{thm}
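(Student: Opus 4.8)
The plan is to treat the two implications separately, the forward direction being immediate and the converse carrying the real content.

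For the \emph{only if} direction, suppose that $\alpha\in B_n/\Gamma_3(P_n)$ has order $\tau$. Since the kernel $P_n/\Gamma_3(P_n)$ of $\overline{\sigma}$ is torsion free by \relem{ptorsion}, the order $m$ of $\overline{\sigma}(\alpha)$ must in fact equal $\tau$: if $m\mid\tau$ with $m<\tau$, then $\alpha^{m}$ would be a non-trivial element of $P_n/\Gamma_3(P_n)$ of finite order $\tau/m$, which is absurd. Hence $\sn$ contains the element $\overline{\sigma}(\alpha)$ of order $\tau$. For the \emph{if} direction, let $x\in\sn$ have order $\tau$ and put $C=\langle x\rangle\cong\Z/\tau\Z$; producing $\alpha$ of order $\tau$ with $\overline{\sigma}(\alpha)=x$ is the same as splitting the restricted extension $1\to P_n/\Gamma_3(P_n)\to\overline{\sigma}^{-1}(C)\to C\to1$ over $C$. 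I would factor this through the tower $B_n/\Gamma_3(P_n)\twoheadrightarrow B_n/\Gamma_2(P_n)\twoheadrightarrow\sn$, whose successive kernels are the central subgroup $Z:=\Gamma_2(P_n)/\Gamma_3(P_n)=L_2(P_n)$ and the lattice $\Lambda:=P_n/\Gamma_2(P_n)$, and lift in two stages: first lift $x$ to an element $\beta$ of order $\tau$ in the crystallographic quotient $B_n/\Gamma_2(P_n)$, which is the case $k=2$ established in \cite{GGO}; then lift $\beta$ through the central kernel $Z$ to an element $\alpha$ of order $\tau$ in $B_n/\Gamma_3(P_n)$.

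The obstructions to the two lifts lie in $H^{2}(C;\Lambda)$ and $H^{2}(C;Z)$ respectively, the $C$-action being induced by $x$ via $\overline{\sigma}$. After conjugating $x$ into a normal form whose disjoint cycles act on consecutive blocks of strands, I would take $\beta$ (resp.\ $\alpha$) to be the product of the standard cycle-lifts on these blocks, so that the obstruction class is represented by the images of the corresponding full twists, supported only on the pairs (resp.\ triples) lying inside a single cycle. The basis of $\Lambda$ indexed by pairs, the basis of $Z=L_2(P_n)$ indexed by triples, and the partition into $\delta_n$-orbits of \resec{partition} then let me identify these modules as direct sums of permutation modules $\Z[C/K]$ and compute the relevant cohomology by Shapiro's lemma. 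The decisive point at stage~A is that for a cycle of \emph{odd} length $\tau_i$ the $C$-stabiliser of an internal pair is exactly the kernel $K_i$ of $C\to\Z/\tau_i\Z$, of order $\tau/\tau_i$, while the full-twist contribution carries precisely this multiplicity $\tau/\tau_i=\lvert K_i\rvert$; the class therefore lands in the image of the norm map and vanishes. The hypothesis $2\nmid\tau$ is exactly what removes the antipodal pairs, which would otherwise have a strictly larger stabiliser and yield a non-trivial class.

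The main obstacle is stage~B. For a triple inside a cycle of length $\tau_i$, a non-trivial rotation stabilising it must permute it cyclically and hence has order $3$, which can occur only when $3\mid\tau_i$; the hypothesis $3\nmid\tau$ therefore guarantees that every internal triple again has stabiliser exactly $K_i$, and the same norm-multiplicity argument forces the $Z$-valued obstruction into the image of the norm. Carrying this out rigorously requires the explicit presentation of $B_n/\Gamma_3(P_n)$ from \resec{presgamma3} and a precise description, in the chosen basis, of the $L_2(P_n)$-level correction term produced by $\beta^{\tau}$, so as to verify that its multiplicity along each triple-orbit is indeed $\tau/\tau_i$. This bookkeeping, rather than any conceptual difficulty, is where the care is needed, and it is precisely the point at which the exclusion of $3$ enters, in parallel with \reth{almostb}. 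Finally, since $\overline{\sigma}(\alpha)=x$ by construction, $\overline{\sigma}(\alpha)$ and $x$ trivially have the same cycle type, which gives the last assertion; the standing assumption $n\ge5$ ensures that elements of order $\tau$ with $\gcd(\tau,6)=1$ exist in $\sn$ and that all the module computations take place in the stable range.
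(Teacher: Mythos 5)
Your argument is correct in substance, but it takes a genuinely different route from the paper. The paper first reduces to the case of a single cycle: it uses the injectivity of the product embedding $\overline{\zeta}$ (\relem{1to1}(\ref{it:1to1b})) to assemble $\alpha$ from elements $\widehat{\delta}_{n_i}$ of order $n_i$ constructed in \reth{ntorsion}, and that theorem is proved by explicitly solving the linear system $\sum_j r_{i,j}=-m_{i,1}$ coming from $(\theta\delta_n)^n=\prod_i\bigl(\prod_j b_{i,j}\bigr)^{\sum_j r_{i,j}+m_{i,1}}$. You instead work globally with $C=\langle x\rangle$ and phrase both lifting steps as the vanishing of classes in $H^2(C;\Lambda)$ and $H^2(C;Z)$, computed via the orbit decomposition and Shapiro's lemma. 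These are really the same computation in two languages: solvability of the paper's linear system is exactly the statement that $\delta_n^n$ lies in the image of the norm map on the free $\Z[C]$-module $Z$, since $(\theta\delta_n)^n=N(\theta)\delta_n^n$. What your version buys is a conceptual explanation of the hypothesis $\gcd(\tau,6)=1$ (it is precisely the condition that internal pairs and triples have stabiliser exactly $K_i$, and your framework correctly predicts the non-vanishing obstruction behind \reth{almostb} when $3\mid n_i$); what the paper's version buys is an elementary, self-contained argument that also produces infinitely many explicit representatives, as in \resec{gamma5}.

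Three points of your sketch need slightly more care than you give them, though none is fatal. First, the action of $\sn$ on $\mathcal{B}$ is a \emph{signed} permutation action (see~\reqref{action1}), so $Z$ is not a priori a sum of modules $\Z[C/K]$; you need the observation, made in the paper's proof of \reth{conjugacy}, that for odd cycles the orbits of $\alpha_{i,j,k}$ and $\alpha_{i,j,k}^{-1}$ are disjoint, and you only need it on the summands actually supporting the obstruction. Second, the multiplicity of $\widetilde{\beta}^{\tau}=\prod_i\bigl(\widetilde{\delta}_{n_i}^{n_i}\bigr)^{\tau/n_i}$ along an orbit is $(\tau/n_i)m_{i,1}$, a \emph{multiple} of $\lvert K_i\rvert=\tau/n_i$ rather than exactly $\tau/n_i$; divisibility is what kills the class in $H^2(C;\Z[C/K_i])\cong\Z/\lvert K_i\rvert$. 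Third, the constancy of the exponents $m_{i,j}$ along each orbit — that is, the $C$-invariance of the cocycle, which is \repr{deltan}(\ref{it:deltanb}) — is still an indispensable input that your "bookkeeping" must establish; it does not come for free from the formalism.
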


We end \resec{torsion} with an analysis of the conjugacy classes of the finite-order elements of $B_n/\Gamma_3(P_n)$, the main result in this direction being the following.


\begin{thm}\label{th:conjugacy}
Let $n\geq 5$, and let $\alpha$ and $\beta$ be two finite-order elements of $B_n/\Gamma_3(P_n)$ whose associated permutations have the same cycle type. Then $\alpha$ and $\beta$ are conjugate in $B_n/\Gamma_3(P_n)$.
\end{thm}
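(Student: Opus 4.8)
The plan is to work with the group extension $1 \to P_n/\Gamma_3(P_n) \to B_n/\Gamma_3(P_n) \to \sn \to 1$ given by the projection $\overline{\sigma}$ to the holonomy group, and to analyse the conjugacy of torsion elements by a two-stage cohomological argument: first passing to the crystallographic quotient $B_n/\Gamma_2(P_n)$, and then lifting through the central kernel $\Gamma_2(P_n)/\Gamma_3(P_n)$. Throughout, I would use that two permutations of $\sn$ have the same cycle type if and only if they are conjugate. Since $\overline{\sigma}$ is surjective, I can lift a permutation conjugating $\overline{\sigma}(\alpha)$ onto $\overline{\sigma}(\beta)$ to an element of $B_n/\Gamma_3(P_n)$, and after replacing $\alpha$ by that conjugate I may assume $\overline{\sigma}(\alpha)=\overline{\sigma}(\beta)=\sigma$. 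By \reth{almostb} the group $B_n/\Gamma_3(P_n)$ has no element of order $2$ or $3$, hence no element of even order nor of order divisible by $3$; thus $\alpha$ and $\beta$ have order $\tau$ with $\gcd(\tau,6)=1$, and since $P_n/\Gamma_3(P_n)$ is torsion free (\relem{ptorsion}) one has $\ord{\alpha}=\ord{\beta}=\ord{\sigma}=\tau$, a number which is in particular odd. All cohomology below is that of the finite cyclic group $\ang{\sigma}$ of order $\tau$.

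Next I would project to $B_n/\Gamma_2(P_n)$. The images $\overline{\alpha},\overline{\beta}$ are sections over $\ang{\sigma}$ of the pullback extension with abelian kernel $P_n/\Gamma_2(P_n)$, so their conjugacy classes under $P_n/\Gamma_2(P_n)$ are governed by $H^1(\ang{\sigma},P_n/\Gamma_2(P_n))$. Now $P_n/\Gamma_2(P_n)$ is the permutation $\sn$-module on the pure braid generators $A_{i,j}$, with $\sigma$ acting by $A_{i,j}\mapsto A_{\sigma(i),\sigma(j)}$; decomposing it into $\ang{\sigma}$-orbits and applying Shapiro's lemma reduces the computation to $H^1(H,\Z)=\operatorname{Hom}(H,\Z)=0$ over the finite stabilisers $H$, whence this $H^1$ vanishes. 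Therefore $\overline{\alpha}$ and $\overline{\beta}$ are conjugate by an element of $P_n/\Gamma_2(P_n)$; lifting that element to $P_n/\Gamma_3(P_n)$ and conjugating $\alpha$ accordingly, I may assume in addition that $\alpha$ and $\beta$ have the same image in $B_n/\Gamma_2(P_n)$. Equivalently, $\beta=c\alpha$ for some $c$ in the central kernel $A:=\Gamma_2(P_n)/\Gamma_3(P_n)$.

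It then remains to absorb $c$ by a conjugation within $A$. Since $A$ is central in $P_n/\Gamma_3(P_n)$ and $\alpha$ acts on it through $\sigma$, a direct computation gives $(c\alpha)^{\tau}=N_\sigma(c)\,\alpha^{\tau}$ with $N_\sigma=1+\sigma+\cdots+\sigma^{\tau-1}$; as $\alpha^{\tau}=\beta^{\tau}=1$, the finite-order hypothesis forces $c\in\operatorname{Ker}(N_\sigma)$. On the other hand, conjugating $\alpha$ by $v\in A$ yields $v\alpha v^{-1}=((1-\sigma)v)\,\alpha$, so the corrections that can be realised are exactly $\operatorname{Im}(1-\sigma)$. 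Hence it would suffice to prove $\widehat{H}^{-1}(\ang{\sigma},A)=\operatorname{Ker}(N_\sigma)/\operatorname{Im}(1-\sigma)=0$, and by the periodicity of the Tate cohomology of the cyclic group $\ang{\sigma}$ this group is isomorphic to $H^1(\ang{\sigma},A)$. Using the explicit basis of $A=\Gamma_2(P_n)/\Gamma_3(P_n)$ and the description of the $\sn$-action from \resec{bngammakpn} and \resec{partition}, the module $A$ is a signed permutation module on a basis indexed by triples; restricted to $\ang{\sigma}$, any sign accumulated around a $\sigma$-orbit is an odd-order root of unity and hence trivial, so $A$ becomes an honest permutation $\ang{\sigma}$-module and the Shapiro argument again gives $H^1(\ang{\sigma},A)=0$. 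Thus $c=(1-\sigma)v$ for some $v\in A$, $v\alpha v^{-1}=\beta$, and composing all the conjugations produces the required element.

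The hard part is this final vanishing: it relies on identifying $\Gamma_2(P_n)/\Gamma_3(P_n)$ explicitly as a signed permutation $\sn$-module through the basis and orbit decomposition of \resec{partition}, and on the observation that the oddness of $\tau$ — guaranteed by \reth{almostb} — kills the signs, so that $\widehat{H}^{-1}(\ang{\sigma},A)$ vanishes. By contrast, the reduction to the crystallographic quotient $B_n/\Gamma_2(P_n)$ in the second step is comparatively routine, once the permutation-module structure of $P_n/\Gamma_2(P_n)$ has been recorded.
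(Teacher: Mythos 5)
Your proof is correct, and it follows the same three-stage skeleton as the paper's: first equalise the permutations, then the images in $B_n/\Gamma_2(P_n)$, then absorb the remaining discrepancy lying in the central kernel $\Gamma_2(P_n)/\Gamma_3(P_n)$. What differs is the machinery used at the second and third stages. At the second stage the paper simply quotes \cite[Theorem~5]{GGO} to conjugate the images in $B_n/\Gamma_2(P_n)$, whereas you reprove the relevant special case by noting that $P_n/\Gamma_2(P_n)$ is a permutation $\ang{\sigma}$-module (this is \repr{prop12}) and that $H^1$ of a finite cyclic group with coefficients in a permutation module vanishes by Shapiro's lemma; this makes the argument more self-contained, at the cost of invoking cohomological language. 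At the third stage the two arguments are literally the same computation in different clothing: the paper's cyclic system $x_{i,j-1}-x_{i,j}=r_{i,j}$ subject to the compatibility condition $\sum_j r_{i,j}=0$, solved orbit by orbit, is exactly the vanishing of $\widehat{H}^{-1}$ of a finite cyclic group with coefficients in its regular representation, which is what you extract from Tate periodicity and Shapiro. The one place where your route is genuinely cleaner is the sign issue: the paper first normalises $\beta$ to the explicit product of the $\widehat{\delta}_{n_i}$ and then checks, by an index-by-index analysis of when $\beta\alpha_{i,j,k}\beta^{-1}=\alpha_{r,s,t}^{-1}$, that the orbit of a basis element and that of its inverse are disjoint; you obtain the same conclusion for an arbitrary $\beta$ directly from the oddness of $\tau$ (the sign accumulated around an orbit of length $\ell$ is a $\pm 1$ whose $(\tau/\ell)$-th power is $1$, hence is $+1$), which removes the need to reduce $\beta$ to a normal form before adapting the basis. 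Both approaches rest on the same inputs: torsion-freeness of the kernels, \reth{almostb} to force $\gcd(\tau,6)=1$, and the signed-permutation description of $\Gamma_2(P_n)/\Gamma_3(P_n)$ from \resec{bngammakpn}.
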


In \resec{small}, we discuss some aspects of the quotients $B_n/\Gamma_3(P_n)$, where $n$ is small. In \resec{gamma3}, we obtain some almost-Bieberbach subgroups of $B_3/\Gamma_3(P_3)$ of dimension $4$ that are the fundamental groups of orientable $4$-dimensional infra-nilmanifolds, and in 
\resec{gamma5}, we compute $\delta_{5}^{5}$ in terms of the chosen basis of $\Gamma_2(P_5)/\Gamma_3(P_5)$. Using the constructions of \resec{torsion2}, this allows us to exhibit explicit elements of order $5$ in $B_5/\Gamma_3(P_5)$.

Another important question in our study is the existence and embedding of finite groups in $B_n/\Gamma_3(P_n)$. For cyclic groups, the answer is given by \reth{torsion}, 
and this may be generalised to Abelian groups in \reco{existAb} using \relem{1to1}(\ref{it:1to1b}). 
We have proved recently in~\cite{GGO1} that Cayley-type theorems hold for $B_n/\Gamma_2(P_n)$ and $B_n/\Gamma_3(P_n)$, namely that if $G$ is any finite group of odd order $n$ (resp.\ of order $n$ relatively prime with $6$) then $G$ embeds in $B_n/\Gamma_2(P_n)$ (resp.\ in $B_n/\Gamma_3(P_n)$). In the case of $B_n/\Gamma_2(P_n)$, the same result has been proved independently by V.~Beck and I.~Marin within the more general setting of complex reflection groups~\cite{MaV}. In the same paper, we also show that with appropriate conditions on $n$ and $m$, two families of groups of the form  $G=\Z_n\rtimes_{\theta} \Z_m$ embed in $B_n/\Gamma_2(P_n)$ and $B_n/\Gamma_3(P_n)$.



\subsection*{Acknowledgements}

Work on this paper started in 2014, and took place during several visits to the Departamento de Matem\'atica, Universidade Federal de Bahia, the Departamento de Matem\'atica do IME~--~Universidade de S\~ao Paulo and to the Laboratoire de Math\'ematiques Nicolas Oresme UMR CNRS~6139, Universit\'e de Caen Normandie. The first two authors were partially supported by  the FAPESP Projeto Tem\'atico Topologia Alg\'ebrica, Geom\'etrica e Diferencial 2012/24454-8 (Brazil), by the R\'eseau Franco-Br\'esilien en Math\'ematiques, by the CNRS/FAPESP project n\up{o}~226555 (France) and n\up{o}~2014/50131-7 (Brazil), and the CNRS/FAPESP PRC project n\up{o}~275209 (France) and n\up{o}~2016/50354-1 (Brazil). The third author was initially supported by 
a project grant n\up{o}~151161/2013-5 from CNPq and then partially supported by the FAPESP Projeto Tem\'atico Topologia Alg\'ebrica, Geom\'etrica e Diferencial n\up{o}~2012/24454-8 (Brazil).

\section{Preliminaries}\label{sec:prelim}

In this section, we recall some definitions and results about Artin braid groups, \mbox{(almost-)} crystallographic groups, and the relations between them that will be used in this paper.

\subsection{Artin braid groups}\label{sec:artin}

We start by recalling some facts about the Artin braid group $B_{n}$ on $n$ strings (see~\cite{Ha} for more details). It is well known that $B_{n}$ possesses a presentation with generators $\sigma_{1},\ldots,\sigma_{n-1}$ that are subject to the following relations:
\begin{equation}\label{eq:artin1}
\left\{ \begin{gathered}
\text{$\sigma_{i} \sigma_{j} = \sigma_{j}  \sigma_{i}$ for all $1\leq i<j\leq n-1$ such that $\ord{i-j}\geq 2$}\\
\text{$\sigma_{i+1} \sigma_{i} \sigma_{i+1} =\sigma_{i} \sigma_{i+1} \sigma_{i}$ for all $1\leq i\leq n-2$.}
\end{gathered}\right.
\end{equation}
Let $\map{\sigma}{B_{n}}[\sn]$ be the homomorphism defined on the given generators of $B_{n}$ by $\sigma(\sigma_{i})=(i,i+1)$ for all $1\leq i\leq n-1$. Just as for braids, we read permutations from left to right so that if $\alpha, \beta \in \sn$, their product is defined by $\alpha\cdot \beta (i)=\beta(\alpha(i))$ for $i=1,2,\ldots, n$. The pure braid group $P_{n}$ on $n$ strings is defined to be the kernel of $\sigma$, from which we obtain the following short exact sequence:
\begin{equation}\label{eq:sespn}
1 \to P_n \to  B_n \stackrel{\sigma}{\to} \sn \to 1.
\end{equation}

Let $G$ be a group. If $g,h\in G$ then $[g,h]=ghg^{-1}h^{-1}$ will denote their commutator, and if $H,K$ are subgroups of $H$ then we set $[H,K]=\setangl{[h,k]}{k\in H,\, k\in K}$. The \textit{lower central series} $\brak{\Gamma_i(G)}_{i\in \N}$ of $G$ is defined inductively by $\Gamma_1(G)=G$, and $\Gamma_{i+1}(G)=[G,\Gamma_i(G)]$ for all  $i\in \N$. If $i=2$, $\Gamma_2(G)$ is the \textit{commutator subgroup} of $G$. For all $i,j\in \N$ with $j>i$, $\Gamma_j(G)$ is a normal subgroup of $\Gamma_i(G)$.
Following P.~Hall, for any group-theoretic property $\mathcal{P}$, $G$ is said to be \textit{residually} $\mathcal{P}$ if for any (non-trivial) element $x\in G$, there exist a group $H$ that possesses property $\mathcal{P}$ and a surjective homomorphism $\phi\colon G\to H$ 
such that $\phi(x)\neq 1$. It is well known that a group $G$ is \textit{residually nilpotent}  if and only if $\cap_{i\geq 1}\Gamma_i(G)=\brak{1}$. 
The lower central series of groups and their sucessive quotients $\Gamma_i(G)/\Gamma_{i+1}(G)$ are isomorphism invariants, and have been widely studied using commutator calculus, in particular for free groups of finite rank~\cite{Hal, MKS}. Falk and Randell, and independently Kohno, investigated the lower central series of the pure braid group $P_n$, and proved that $P_n$ is residually nilpotent~\cite{FR, Ko}. 

A presentation of $P_n$ is given by the set of generators $\brak{A_{i,j}}_{1\leq i<j\leq n}$, where:
\begin{equation}\label{eq:defaij}
A_{i,j}=\sigma_{j-1}\cdots \sigma_{i+1}\sigma_{i}^{2} \sigma_{i+1}^{-1}\cdots \sigma_{j-1}^{-1},
\end{equation}
subject to the following relations that are expressed in terms of commutators (see \cite[Remark~3.1,~p.~56]{MK} or~\cite[Chapter~1,~Lemma~4.2]{Ha}):
\begin{equation}\label{eq:purerelations}
\begin{cases}
[A_{r,s},A_{i,j}]=1 & \text{if $1\leq r<s<i<j\leq n$ or $1\leq r<i<j<s\leq n$}\\
[A_{r,s},A_{r,j}]=[A_{s,j}^{-1},A_{r,j}] & \text{if $1\leq r<s<j\leq n$}\\
[A_{r,s},A_{s,j}]=[A_{s,j}^{-1},A_{r,j}^{-1}] & \text{if $1\leq r<s<j\leq n$}\\
[A_{r,i},A_{s,j}]=\left[[A_{i,j}^{-1},A_{r,j}^{-1}],A_{s,j}\right] & \text{if $1\leq r<s<i<j\leq n$}.
\end{cases} 
\end{equation}
For notational reasons, if $1\leq i<j\leq n$, we set $A_{j,i}=A_{i,j}$, and if $A_{i,j}$ appears in a word of $P_{n}$ with exponent $m_{i,j}\in \Z$, then we let $m_{j,i}=m_{i,j}$. It follows from the presentation~\reqref{purerelations} that $P_n/\Gamma_2(P_n)$ is isomorphic to $\Z^{n(n-1)/2}$, and that a basis of $P_n/\Gamma_2(P_n)$ is given by $\brak{A_{i,j}}_{1\leq i<j\leq n}$, where by abuse of notation, the $\Gamma_2(P_n)$-coset of $A_{i,j}$ will also be denoted by $A_{i,j}$. For all $k\geq2$, \req{sespn} gives rise to the following short exact sequence:
\begin{equation}\label{eq:sespnquot}
1 \to  P_n/\Gamma_k(P_n) \to  B_n/\Gamma_k(P_n) \stackrel{\overline{\sigma}}{\to} \sn \to 1,
\end{equation}
where $\map{\overline{\sigma}}{B_n/\Gamma_k(P_n)}[\sn]$ is the homomorphism induced by $\sigma$. 
In much of what follows, we shall be interested in the action by conjugation of $B_n/\Gamma_2(P_n)$ on $\Gamma_2(P_n)/\Gamma_3(P_n)$. 
For all $1\leq k\leq n-1$ and $1\leq i<j\leq n$, the action of $B_n$ on $P_n$ described in \cite[equation~(7)]{GGO} may be rewritten as:
\begin{equation}\label{eq:conjugAij1}
\sigma_k A_{i,j}\sigma_k^{-1}=
\begin{cases}
A_{i,j}^{-1}A_{i,j-1}A_{i,j} & \text{if $j=k+1$ and $i<k$}\\
A_{i,j}^{-1}A_{i-1,j}A_{i,j} & \text{if  $i=k+1$}\\
A_{\sigma_k^{-1}(i),\; \sigma_k^{-1}(j)} & \text{otherwise,}
\end{cases}
\end{equation}
where by abuse of notation, we write $\sigma_k^{-1}(i)=\sigma(\sigma_k^{-1})(i)$, $\sigma$ being as in \req{sespn}. This action was used in~\cite{GGO} to prove the following proposition.

\begin{prop}[{\cite[Proposition~12]{GGO}}]\label{prop:prop12}
Let $\alpha\in B_n/\Gamma_2(P_n)$, and let $\pi$ be the permutation induced by $\alpha^{-1}$. Then for all $1\leq i<j\leq n$, $\alpha A_{i,j}\alpha^{-1}=A_{\pi(i), \pi(j)}$ in $P_n/\Gamma_2(P_n)$.
\end{prop}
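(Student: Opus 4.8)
The plan is to reduce the statement to the standard generators $\sigma_1,\ldots,\sigma_{n-1}$ of $B_n$ and then to argue by induction on word length. Since $\Gamma_2(P_n)$ is a characteristic subgroup of $P_n$ and $P_n$ is normal in $B_n$, the subgroup $\Gamma_2(P_n)$ is normal in $B_n$, so conjugation induces a well-defined action of $B_n/\Gamma_2(P_n)$ on $P_n/\Gamma_2(P_n)$; I would write $\Phi_\alpha$ for the automorphism $A_{i,j}\mapsto \alpha A_{i,j}\alpha^{-1}$, which satisfies $\Phi_{\alpha\beta}=\Phi_\alpha\circ\Phi_\beta$. Because $P_n/\Gamma_2(P_n)$ is Abelian, conjugation by an element of $P_n/\Gamma_2(P_n)$ is trivial, so $\Phi_\alpha$ depends only on the image $\theta=\sigma(\alpha)\in\sn$. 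Representing a given $\alpha$ by a word in the $\sigma_k^{\pm1}$ then reduces the proof to understanding $\Phi_{\sigma_k^{\pm1}}$ and composing.

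First I would establish the base case. For a generator $\sigma_k$, I reduce \reqref{conjugAij1} modulo $\Gamma_2(P_n)$: in the Abelian group $P_n/\Gamma_2(P_n)$ one has $A_{i,j}^{-1}A_{i,j-1}A_{i,j}=A_{i,j-1}$ and $A_{i,j}^{-1}A_{i-1,j}A_{i,j}=A_{i-1,j}$. Writing $s_k=(k,k+1)=\sigma(\sigma_k)=\sigma(\sigma_k^{-1})$, a short check of the three cases of \reqref{conjugAij1}, together with the convention $A_{a,b}=A_{b,a}$, shows that in every case $\sigma_k A_{i,j}\sigma_k^{-1}=A_{s_k(i),\,s_k(j)}$ in $P_n/\Gamma_2(P_n)$. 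For instance, in the first case $j=k+1$ and $i<k$ one has $s_k(i)=i$ and $s_k(j)=k=j-1$, so $A_{s_k(i),s_k(j)}=A_{i,j-1}$, and the second case is analogous; the remaining subcases fall under ``otherwise'', where the formula holds by definition of $\sigma_k^{-1}(\cdot)$. Since $\Phi_{\sigma_k}$ depends only on $\sigma(\sigma_k)=s_k$, the same equality holds with $\sigma_k$ replaced by $\sigma_k^{-1}$.

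Next I would run the induction. Writing $\alpha=\sigma_k^{\epsilon}\beta$ with $\epsilon=\pm1$ and $\beta$ strictly shorter, and setting $\theta=\sigma(\alpha)$, $\theta'=\sigma(\beta)$, so that $\theta=s_k\cdot\theta'$ in $\sn$, the relation $\Phi_\alpha=\Phi_{\sigma_k^{\epsilon}}\circ\Phi_\beta$ and the inductive hypothesis $\Phi_\beta(A_{i,j})=A_{\theta'^{-1}(i),\,\theta'^{-1}(j)}$ give $\Phi_\alpha(A_{i,j})=A_{s_k(\theta'^{-1}(i)),\,s_k(\theta'^{-1}(j))}$. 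The rest is permutation bookkeeping: with the left-to-right convention $\mu\cdot\nu(i)=\nu(\mu(i))$ one has $\theta=\theta'\circ s_k$ as ordinary functions, hence $\theta^{-1}=s_k\circ\theta'^{-1}$, i.e.\ $s_k(\theta'^{-1}(i))=\theta^{-1}(i)$. Therefore $\Phi_\alpha(A_{i,j})=A_{\theta^{-1}(i),\,\theta^{-1}(j)}$, and since the permutation $\pi$ induced by $\alpha^{-1}$ is $\sigma(\alpha^{-1})=\theta^{-1}$, this is exactly $\alpha A_{i,j}\alpha^{-1}=A_{\pi(i),\pi(j)}$, as required.

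The genuinely delicate part is not the braid-theoretic input, which is just the reduction of \reqref{conjugAij1} modulo $\Gamma_2(P_n)$ and a finite case-check, but keeping the permutation conventions consistent throughout. Because the paper composes permutations from left to right while the conjugation action composes in the usual functional order, and because the target indices are governed by $\pi=\sigma(\alpha^{-1})$ rather than by $\sigma(\alpha)$, one must ensure that the inverse and the order of composition match at each step; the computation above confirms that they do. Finally, the unordered-pair convention $A_{a,b}=A_{b,a}$ is precisely what lets the single formula $A_{s_k(i),s_k(j)}$ absorb all the separate cases of \reqref{conjugAij1}, including those in which $s_k$ reverses the order of the two indices.
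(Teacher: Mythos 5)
Your proof is correct. Note that the paper itself gives no proof of this statement---it is quoted from \cite[Proposition~12]{GGO}, with \reqref{conjugAij1} recorded as the ingredient used there---and your argument (reduce \reqref{conjugAij1} modulo $\Gamma_2(P_n)$ to get $\sigma_k A_{i,j}\sigma_k^{-1}=A_{s_k(i),s_k(j)}$, observe that the action factors through $\sn$ because $P_n/\Gamma_2(P_n)$ is Abelian, then induct on word length with careful attention to the left-to-right composition convention) is exactly the standard route taken in that reference.
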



For all $\alpha\in B_n/\Gamma_3(P_n)$,  and all $1\leq i<j\leq n$ and $1\leq r<s\leq n$, we claim that:
\begin{equation}\label{eq:conjugalpha}
\alpha [A_{i,j},A_{r,s}]\alpha^{-1}=[\alpha A_{i,j} \alpha^{-1}, \alpha A_{r,s}\alpha^{-1}]= [A_{\pi(i), \pi(j)},A_{\pi(r), \pi(s)}] \,\text{in $P_n/\Gamma_3(P_n)$,}
\end{equation}
where $\pi$ is the permutation induced by $\alpha^{-1}$, and by abuse of notation, $A_{i,j}$ and $A_{r,s}$ are considered as elements of $P_{n}/\Gamma_3(P_n)$. To see this, first note that the first equality of~\reqref{conjugalpha} clearly holds, and that for all $k\in \N$, we have the following short exact sequence:
\begin{equation}\label{eq:gamma23P}
1\to \Gamma_k(P_n)/\Gamma_{k+1}(P_n) \to P_n/\Gamma_{k+1}(P_n) \to P_n/\Gamma_k(P_n)\to 1.
\end{equation}
Taking $k=2$ in~\reqref{gamma23P} and using \repr{prop12}, there exist $\gamma_{1},\gamma_{2}\in \Gamma_2(P_n)/\Gamma_{3}(P_n)$ such that $\alpha A_{i,j} \alpha^{-1}=\gamma_{1} A_{\pi(i), \pi(j)}$ and $\alpha A_{r,s} \alpha^{-1}=\gamma_{2} A_{\pi(r), \pi(s)}$. The second equality of~\reqref{conjugalpha} then follows using standard commutator relations.

\subsection{Almost-crystallographic groups}\label{sec:acg}

In this section, we recall briefly the definitions of almost-crystallographic and almost-Bieberbach groups, which are natural generalisations of crystallographic and Bieber\-bach groups, as well as a characterisation of almost-crystallographic groups.  For more details about crystallographic groups, see~\cite[Section~I.1.1]{Charlap},~\cite[Section~2.1]{Dekimpe} or~\cite[Chapter~3]{Wolf}.

Given a connected and simply-connected nilpotent Lie group $N$, the group $\aff{N}$ of affine transformations of $N$ is defined by $\aff{N}=N\rtimes \aut{N}$, and acts on $N$ by:
\begin{equation*}
\text{$(n,\phi)\cdot m = n\phi(m)$ for all $m,n\in N$ and $\phi\in \aut{N}$}.
\end{equation*}
\begin{defn}[{\cite[Sec.~2.2, p.~15]{Dekimpe}}]\label{def:almostc}
Let $N$ be a connected, simply-connected nilpotent Lie group, and consider a maximal compact subgroup $C$ of $\aut{N}$. A uniform discrete subgroup $E$ of $N\rtimes C$ is called an \emph{almost-crystallographic group}, and its dimension is defined to be that of $N$. A torsion-free, almost-crystallographic group is called an \emph{almost-Bieberbach group}, and the quotient space $E\backslash N$ is called an \emph{infra-nilmanifold}. If further $E\subseteq N$, we say that the space $E\backslash N$ is a \emph{nilmanifold}. 
\end{defn}
It is well known that infra-nilmanifolds are classified by their fundamental group that is almost-crystallographic~\cite{Au}
Every almost-crystallographic subgroup $E$ of the group $\aff{N}$ fits into an extension:
\begin{equation}\label{eq:ses}
 1\to \Lambda \to E \to F \to 1,
\end{equation}
where $\Lambda=E\cap N$ is a uniform lattice in $N$, and $F$ is a finite subgroup of $C$ known as the \emph{holonomy group} of the corresponding infra-nilmanifold $E\backslash N$~\cite{Au}. 
Let $M$ be an infra-nilmanifold whose fundamental group $E$ is almost-crystallographic. Following~\cite[Page~788]{GPS},  we recall the construction of a faithful linear representation associated with the extension~\reqref{ses}. Suppose that the nilpotent lattice $\Lambda$ is of class $c+1$ \emph{i.e.}\ $\Gamma_c(\Lambda)\neq 1$ and $\Gamma_{c+1}(\Lambda)= 1$. 
For $i=1,\ldots,c$, let $Z_i=\Gamma_i(\Lambda)\bigl/\Gamma_{i+1}(\Lambda)$ denote the factor groups of the lower central series $\brak{\Gamma_{i}(\Lambda)}_{i=1}^{c+1}$ of $\Lambda$.
We will assume further that these quotients are torsion free, since this will be the case for the groups that we will study in the following sections. Thus $Z_i\cong \Z^{k_i}$ for all $1\leq i \leq c$ and for some $k_i>0$. The \emph{rank} or \emph{Hirsch number} of $\Lambda$ is equal to $\sum_{i=1}^{c}k_i$.
The action by conjugation of $E$ on $\Lambda$ induces an action of $E$ on $Z_{i}$ which factors through an action of the group $E/\Lambda$ (the holonomy group $F$), because $\Lambda$ acts trivially on $Z_{i}$.
This gives rise to a faithful representation $\theta_F\colon F\to \operatorname{\text{GL}}(n,\Z)$ via the composition:
\begin{equation}\label{eq:theta}
 \theta_F\colon F\to \operatorname{\text{GL}}(k_1,\Z)\times \cdots \times \operatorname{\text{GL}}(k_c,\Z) \to \operatorname{\text{GL}}(n,\Z),
\end{equation}
where $n$, which is the rank of $\Gamma$, is also equal to the dimension of $N$. 
Using~\cite[Remark~2.5]{GPS}, this representation will be used in \resec{gamma3} to decide whether $M$ is orientable or not. 
In order to prove \reth{acgroups}, we shall use part of the algebraic characterisation of almost-crystallographic groups given in~\cite[Theorem~3.1.3]{Dekimpe} as follows. 

\begin{thm}[{\cite[Theorem~3.1.3]{Dekimpe}}]\label{th:dekimpe}
Let $E$ be a polycyclic-by-finite group. Then $E$ is almost-crystallographic if and only if it has a  nilpotent subgroup, and possesses no non-trivial finite normal subgroups.
\end{thm}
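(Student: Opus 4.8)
The statement is an equivalence, so the plan is to prove the two implications separately, writing $E$ for the given polycyclic-by-finite group and reading ``has a nilpotent subgroup'' as ``has a nilpotent subgroup of finite index'' (the only sensible meaning in the polycyclic-by-finite setting).

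For necessity, suppose $E$ is almost-crystallographic in the sense of \redef{almostc}, so that $E$ is a uniform discrete subgroup of $N\rtimes C$ fitting into the extension \reqref{ses}, with $\Lambda=E\cap N$ a uniform lattice in the connected simply-connected nilpotent Lie group $N$ and $F=E/\Lambda$ finite. Since $\Lambda$ is finitely generated torsion-free nilpotent and of finite index, $E$ is virtually nilpotent, which gives the first condition. For the second, let $H\trianglelefteq E$ be finite. As $\Lambda$ is torsion free we have $H\cap\Lambda=\brak{1}$, and since $\Lambda$ and $H$ are both normal, for $\lambda\in\Lambda$ and $h\in H$ the commutator $[\lambda,h]=(\lambda h\lambda^{-1})h^{-1}\in H$ also lies in $\lambda(h\lambda^{-1}h^{-1})\in\Lambda$, whence $[\Lambda,H]\subseteq\Lambda\cap H=\brak{1}$ and $H$ centralises $\Lambda$. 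A direct computation in $N\rtimes C$ shows that $(n,\phi)$ centralises every $(\lambda,\id)$ if and only if $\phi(\lambda)=n^{-1}\lambda n$ for all $\lambda\in\Lambda$; as $\Lambda$ spans the Lie algebra of $N$, this forces $\phi$ to equal conjugation by $n^{-1}$, so $\phi\in\operatorname{Inn}(N)$. Since $\operatorname{Inn}(N)\cong N/Z(N)$ is itself simply-connected nilpotent it has no non-trivial compact subgroup, so $\phi\in C\cap\operatorname{Inn}(N)=\brak{\id}$; then $n\in Z(N)$, so $H\subseteq N$, and being finite inside the torsion-free $\Lambda=E\cap N$ we conclude $H=\brak{1}$.

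For sufficiency, assume $E$ is polycyclic-by-finite, virtually nilpotent, and has no non-trivial finite normal subgroup. Let $\Lambda=\operatorname{Fitt}(E)$ be the Fitting subgroup; in a virtually nilpotent polycyclic-by-finite group it is the maximal nilpotent normal subgroup and has finite index. The torsion elements of the finitely generated nilpotent group $\Lambda$ form a finite characteristic subgroup, which is therefore a finite normal subgroup of $E$ and hence trivial by hypothesis; thus $\Lambda$ is finitely generated torsion-free nilpotent. By Mal'cev's theorem (see~\cite{Dekimpe}) it embeds as a uniform lattice in a unique connected simply-connected nilpotent Lie group $N$, with $\aut{\Lambda}\cong\aut{N}$ by rigidity. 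The conjugation action of $E$ on $\Lambda$ then extends, through this isomorphism, to an action on $N$, and following Auslander's construction~\cite{Au} one uses the extension data of $1\to\Lambda\to E\to F\to 1$ (a $2$-cocycle with values in $\Lambda\subseteq N$) to build a homomorphism $E\to\aff{N}=N\rtimes\aut{N}$ restricting to the Mal'cev embedding on $\Lambda$, with uniform discrete image and $E\cap N=\Lambda$. Conjugating by a suitable automorphism moves the finite holonomy part into a maximal compact subgroup $C$ of $\aut{N}$, yielding $E\le N\rtimes C$ and exhibiting $E$ as almost-crystallographic.

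The main obstacle is the sufficiency direction, and precisely the affine realisation $E\hookrightarrow\aff{N}$ together with its conjugation into $N\rtimes C$: one must check that the constructed map is injective with uniform discrete image and that the extension cocycle is affinely realisable, i.e.\ that no obstruction in $H^{2}(F;Z(N))$ obstructs the embedding. This is exactly where both hypotheses enter essentially---torsion-freeness of the Fitting subgroup, extracted from the absence of finite normal subgroups, guarantees $E\cap N=\Lambda$, while virtual nilpotence guarantees $\Lambda$ has finite index so that $F$ is finite and $\Lambda\backslash N$ is compact. By contrast, necessity reduces to the short centraliser computation above.
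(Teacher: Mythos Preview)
The paper does not prove this theorem at all; it is simply quoted from~\cite[Theorem~3.1.3]{Dekimpe} as a black-box characterisation, and is then applied in the proof of \reth{acgroups}. So there is no ``paper's own proof'' to compare your attempt against.

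That said, your sketch is broadly along the right lines and matches the standard argument in the literature. The necessity direction is clean and essentially complete: the commutator trick to show a finite normal $H$ centralises $\Lambda$, followed by the Lie-group argument that forces $H\subseteq N$ and hence $H\subseteq E\cap N=\Lambda$, is correct. For sufficiency, you correctly identify the Fitting subgroup as the candidate lattice and use the no-finite-normal-subgroup hypothesis to strip its torsion; the Mal'cev completion and rigidity $\aut{\Lambda}\hookrightarrow\aut{N}$ are also the right ingredients. Where your argument becomes genuinely incomplete is exactly where you flag it: the passage from the abstract extension $1\to\Lambda\to E\to F\to 1$ to a concrete embedding $E\hookrightarrow N\rtimes C$ with $E\cap N=\Lambda$. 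This is not a triviality---one really does need the vanishing of the relevant obstruction (or, equivalently, Auslander's generalised Bieberbach theorem~\cite{Au} together with Dekimpe's refinement), and simply citing a $2$-cocycle construction does not discharge it. In particular, you should make precise why the finite quotient $F$ can be conjugated into a maximal compact $C\subset\aut{N}$ and why the resulting image is discrete and uniform; these are the substantive steps in~\cite{Dekimpe}.
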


\section{The almost-crystallographic group $B_n/\Gamma_k(P_n)$}\label{sec:acgroups}


Let $n,k\geq3$. 
In this section, we study the group $B_n/\Gamma_k(P_n)$. In \resec{rankGamma}, we start by recalling some results about the quotient groups $\Gamma_k(P_n)/\Gamma_{k+1}(P_n)$ that appear in~\reqref{gamma23P}. In \resec{bngammakpn}, we prove Theorems~\ref{th:acgroups} and~\ref{th:almostb} which state that the groups $B_n/\Gamma_k(P_n)$ are almost-crystallographic, and that $B_n/\Gamma_k(P_n)$ possesses no element of order $2$ or $3$ respectively. This allows us to prove in \reco{almostb}, which shows that if $H$ is a subgroup of $\sn$ whose order is not divisible by any prime other than $2$ or $3$ then the subgroup $\overline{\sigma}^{-1}(H)/\Gamma_k(P_n)$ of $B_{n}/\Gamma_k(P_n)$ is almost-Bieberbach.

\subsection{The rank of the free Abelian group $\Gamma_k(P_n)/\Gamma_{k+1}(P_n)$}\label{sec:rankGamma} 



Let $n\geq 2$ and $k\geq 1$. 
The group $\Gamma_k(P_n)/\Gamma_{k+1}(P_n)$, which we shall denote by $L_k(P_n)$, is free Abelian of finite rank, and by~\cite[Theorem~4.2]{FRinv} and~\cite[Theorem~4.5]{Ko}, its rank is related to the Poincar\'e polynomial of certain hyperplane complements.
Using Chen groups, Cohen and Suciu gave explicit formul\ae\ for $\operatorname{rank}(L_k(P_n))$ for $k\in \brak{1,2,3}$ and all $n\geq 2$~\cite[Theorem~1.1 and page~46]{CS}.  More generally, by~\cite[Theorem~4.6]{LVW}, the rank of $L_k(P_n)$ is given by \req{ranklvw}. 
In practice, we may compute these 
ranks as follows. If $k\geq 2$, let $k^{\ast}$ be the product of the distinct prime divisors of $k$. 
Then~\reqref{ranklvw} may be rewritten as: 
\begin{equation}\label{eq:reduction}
\operatorname{rank}(L_k(P_n))=\frac{1}{k}\sum_{j=1}^{n-1}\sum_{d|k^\ast}\mu(d)j^{k/d} =\frac{1}{k} \sum_{d|k^\ast}\mu(d)S_{k/d}(n),
\end{equation}
where $S_r(n)=\sum_{j=1}^{n-1}j^r$.
The number of summands in the expression $\frac{1}{k}\sum_{d|k^\ast}\mu(d)j^{k/d}$ is equal to $2^{t}$.

\begin{prop}
Let $n\geq 2$. Then $\operatorname{rank}(L_k(P_n))$
is a polynomial of degree $k+1$ in the variable $n$.
\end{prop}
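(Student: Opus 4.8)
The plan is to deduce everything from the closed form in \req{reduction} together with the classical fact that power sums are polynomials. Recall Faulhaber's formula: for each integer $r\geq 1$, the sum $S_r(n)=\sum_{j=1}^{n-1}j^r$ is a polynomial in $n$ of degree $r+1$ whose leading coefficient equals $1/(r+1)$ (this follows, for instance, from the theory of Bernoulli polynomials). Since every exponent $k/d$ appearing in \req{reduction} satisfies $d\mid k^{\ast}\mid k$, and hence $k/d\geq 1$, each summand $S_{k/d}(n)$ is such a polynomial.

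First I would substitute Faulhaber's formula into \req{reduction}, writing $\operatorname{rank}(L_k(P_n))=\frac{1}{k}\sum_{d\mid k^{\ast}}\mu(d)S_{k/d}(n)$. As a $\mathbb{Q}$-linear combination of the polynomials $S_{k/d}(n)$, the right-hand side is itself a polynomial in $n$ with rational coefficients; it then remains only to compute its degree.

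Next I would isolate the contribution of the divisor $d=1$, which gives the single summand $\frac{1}{k}\mu(1)S_k(n)=\frac{1}{k}S_k(n)$, a polynomial of degree $k+1$ with leading coefficient $\frac{1}{k(k+1)}$. Every other divisor $d\mid k^{\ast}$ satisfies $d\geq 2$, so $k/d\leq k/2$ and the corresponding summand $S_{k/d}(n)$ has degree at most $k/2+1<k+1$. Consequently no summand other than the one for $d=1$ can affect the coefficient of $n^{k+1}$, which therefore equals $\frac{1}{k(k+1)}\neq 0$, and there are no terms of degree exceeding $k+1$. This establishes that $\operatorname{rank}(L_k(P_n))$ is a polynomial in $n$ of degree exactly $k+1$.

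There is essentially no serious obstacle here: the only point requiring care is verifying that the degree-$(k+1)$ term survives, and this is immediate since $d=1$ is the unique divisor for which $S_{k/d}(n)$ attains degree $k+1$, while $\mu(1)=1$ guarantees that its coefficient is nonzero. For completeness one should also treat the degenerate case $k=1$ separately, where $k^{\ast}=1$ and \req{ranklvw} reduces to $\operatorname{rank}(L_1(P_n))=S_1(n)=\binom{n}{2}$, a polynomial of degree $2=k+1$, consistent with the claim.
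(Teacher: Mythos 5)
Your argument is correct and follows essentially the same route as the paper's: both deduce the result from \req{reduction} by observing that each $S_{k/d}(n)$ is a polynomial of degree $\frac{k}{d}+1$ and that the divisor $d=1$ alone contributes the top-degree term $n^{k+1}$. Your version merely adds the explicit leading coefficient $\frac{1}{k(k+1)}$ and the separate check for $k=1$, which the paper leaves implicit.
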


\begin{proof}
By~\cite[Chapter~XVI]{N}, the sum $S_{k/d}(n)$ is a polynomial of degree $\frac{k}{d}+1$ in the variable $n$, so $\operatorname{rank}(L_k(P_n))$ is also a polynomial in the variable $n$. The result follows by noting that $d=1$ divides $k^\ast$.
\end{proof}

\begin{rems}\label{rem:rank1}\mbox{}
\begin{enumerate}[(a)]
\item For small values of $r$, a polynomial expression for $S_r(n)$ was computed for example in~\cite[Section~1.2]{AIK} for $1\leq r\leq 6$, and~\cite[Chapter~XVI, page~296]{N} or \cite[Tables~I~and~II]{Wi} for $1\leq r\leq 10$.

\item\label{it:rank1b} Using 
the description given in \req{reduction},
we have computed $\operatorname{rank}(L_k(P_n))$ for all 
$1\leq k\leq 10$. 
If $n=2$ or $3$, we obtain the equalities $\operatorname{rank}(L_2(P_n))=\binom{n}{3}$ and $\operatorname{rank}(L_3(P_n))=2\binom{n+1}{4}$ given in~\cite[Theorem~1.1]{CS}.
For $3\leq k\leq 10$, $\operatorname{rank}(L_k(P_n))$ is the product 
of $\binom{n+1}{4}$ by a polynomial in the variable $n$ of degree $k-3$.
The authors do not know whether this is true in general.

\item The numbers $\operatorname{rank}(L_k(P_n))$
may be expressed in terms of Bernoulli numbers since the latter are closely related to sums of powers of consecutive integers (see~\cite{AIK} for more information about Bernoulli numbers, and especially Formula~(1.1) on page~1).
\end{enumerate}
\end{rems}

\subsection{The quotient groups $B_n/\Gamma_k(P_n)$}\label{sec:bngammakpn}

The proofs of the first two results of this section are straightforward, but will be useful in the analysis of the group $B_n/\Gamma_k(P_n)$. First note that $P_n/\Gamma_k(P_n)$ is a nilpotent group of nilpotency class $k-1$, and as we shall see in \relem{ptorsion}(\ref{it:ptorsiona}), it is torsion free.

\begin{prop}\label{prop:nilmanifold}
Let $n,k\geq 3$. The Hirsch length of the nilpotent group $P_n/\Gamma_k(P_n)$
is equal to $\sum_{q=1}^{k-1}\left( \frac{1}{q}\sum_{j=1}^{n-1}\sum_{d|q^\ast}\mu(d)j^{q/d} \right)$. 
In particular, the Hirsch length of $P_n/\Gamma_3(P_n)$ (resp.\ of $P_n/\Gamma_4(P_n)$) is
equal to $\binom{n}{2} + \binom{n}{3}$ (resp.\ to $\binom{n}{2} + \binom{n}{3} + 2 \binom{n+1}{4}$). 
\end{prop}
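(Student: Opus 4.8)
The Hirsch length (rank) of a polycyclic group is additive across the factors of any subnormal series with finitely generated abelian factors, so my plan is to apply this additivity to the lower central series of $P_n/\Gamma_k(P_n)$. Concretely, I would consider the filtration
\begin{equation*}
P_n/\Gamma_k(P_n) = \Gamma_1(P_n)/\Gamma_k(P_n) \supseteq \Gamma_2(P_n)/\Gamma_k(P_n) \supseteq \cdots \supseteq \Gamma_{k-1}(P_n)/\Gamma_k(P_n) \supseteq 1,
\end{equation*}
whose successive quotients are precisely the groups $L_q(P_n)=\Gamma_q(P_n)/\Gamma_{q+1}(P_n)$ for $1\leq q\leq k-1$. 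Since the Hirsch length of a finitely generated free abelian group equals its rank, and the Hirsch length is additive over short exact sequences (the sequences~\reqref{gamma23P} realise this filtration step by step), I would conclude that the Hirsch length of $P_n/\Gamma_k(P_n)$ equals $\sum_{q=1}^{k-1}\operatorname{rank}(L_q(P_n))$.

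The next step is purely to substitute the known formula for the ranks. By~\reqref{ranklvw}, or equivalently by its reduced form~\reqref{reduction}, we have $\operatorname{rank}(L_q(P_n))=\frac{1}{q}\sum_{j=1}^{n-1}\sum_{d\mid q^{\ast}}\mu(d)j^{q/d}$, and summing over $q$ from $1$ to $k-1$ gives the stated general expression. I note in passing that for $q=1$ the formula correctly returns $\operatorname{rank}(L_1(P_n))=\binom{n}{2}$, matching the known fact that $P_n/\Gamma_2(P_n)\cong \Z^{n(n-1)/2}$.

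For the two special cases, I would simply read off the ranks from~\cite[Theorem~1.1]{CS} (or from \rerem{rank1}(\ref{it:rank1b})): one has $\operatorname{rank}(L_1(P_n))=\binom{n}{2}$, $\operatorname{rank}(L_2(P_n))=\binom{n}{3}$, and $\operatorname{rank}(L_3(P_n))=2\binom{n+1}{4}$. Taking $k=3$ then yields $\binom{n}{2}+\binom{n}{3}$, and taking $k=4$ yields $\binom{n}{2}+\binom{n}{3}+2\binom{n+1}{4}$, as claimed.

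I expect no genuine obstacle here: the only point requiring care is the justification that the Hirsch length is well-defined and additive, which relies on the fact—guaranteed by the cited rank formula and recorded in \relem{ptorsion}(\ref{it:ptorsiona})—that each factor $L_q(P_n)$ is free abelian (in particular torsion free), so that the filtration above has the torsion-free abelian factors needed for the additivity of Hirsch length to apply cleanly. The remainder is a direct computation.
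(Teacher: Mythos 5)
Your proposal is correct and follows essentially the same route as the paper: both compute the Hirsch length as the sum $\sum_{q=1}^{k-1}\operatorname{rank}(L_q(P_n))$ of the ranks of the lower central series quotients and then substitute the formula of \reqref{reduction}, reading off the special cases from \rerems{rank1}(\ref{it:rank1b}). The paper's proof is just a terser version of yours, invoking in one line the standard fact about additivity of Hirsch length over the lower central series that you spell out via the filtration and the short exact sequences~\reqref{gamma23P}.
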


\begin{proof}
Let $n,k\geq 3$. Since the Hirsch length of a nilpotent group is equal to the sum of the ranks of the consecutive lower central series quotients, 
the first part of the statement follows from \req{reduction}.
If $q\in \brak{3,4}$, the formul\ae\ is then a consequence of those given in~\rerems{rank1}(\ref{it:rank1b}).
 \end{proof}

\begin{lem}\mbox{}\label{lem:ptorsion}
\begin{enumerate}[(a)]
\item\label{it:ptorsiona} Let
$n,k\geq 2$. Then the group $P_n/{\Gamma_k(P_n)}$ is torsion free.

\item\label{it:ptorsionb} Let $n\geq 3$, let $k\geq l\geq 1$, and let $G$ be a finite group. If $B_n/\Gamma_k(P_n)$ possesses a (normal) subgroup isomorphic to $G$ then $B_n/\Gamma_l(P_n)$ possesses a (normal) subgroup isomorphic to $G$. In particular, if $p$ is prime, and if $B_n/\Gamma_l(P_n)$ has no $p$-torsion then $B_n/\Gamma_k(P_n)$ has no $p$-torsion.
\end{enumerate}
\end{lem}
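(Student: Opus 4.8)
The plan is to prove the two parts separately, treating (a) as essentially known from the literature and (b) by a direct pullback argument using the functoriality of the lower central series.

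\medskip

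\textbf{Part (\ref{it:ptorsiona}).} The statement that $P_n/\Gamma_k(P_n)$ is torsion free for all $n,k\geq 2$ follows from the structure of the lower central series of $P_n$. First I would recall that $P_n$ is residually torsion-free nilpotent, a result of Falk and Randell~\cite{FR}. The key structural input is the short exact sequence~\reqref{gamma23P}, which for each $k$ expresses $P_n/\Gamma_{k+1}(P_n)$ as an extension of $P_n/\Gamma_k(P_n)$ by the free Abelian group $L_k(P_n)=\Gamma_k(P_n)/\Gamma_{k+1}(P_n)$ (whose freeness and finite rank are guaranteed by~\cite[Theorem~4.6]{LVW} via~\reqref{ranklvw}). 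The proof then proceeds by induction on $k$: the base case $k=2$ gives $P_n/\Gamma_2(P_n)\cong \Z^{n(n-1)/2}$, which is torsion free; and for the inductive step, an extension of a torsion-free nilpotent group by a torsion-free (central) Abelian group is torsion free. Here one uses that the quotients $L_k(P_n)$ are torsion free and sit as central subgroups in $P_n/\Gamma_{k+1}(P_n)$, so no torsion can be created in the extension. Thus $P_n/\Gamma_k(P_n)$ is torsion free for every $k$.

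\medskip

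\textbf{Part (\ref{it:ptorsionb}).} The crucial observation is that the lower central series is natural with respect to the group $P_n$ sitting inside $B_n$, and that $\Gamma_k(P_n)\subseteq \Gamma_l(P_n)$ whenever $k\geq l$. This inclusion, together with normality of each $\Gamma_k(P_n)$ in $B_n$ (since the $\Gamma_k(P_n)$ are characteristic in $P_n$ and $P_n$ is normal in $B_n$), yields a canonical surjective homomorphism
\begin{equation*}
\map{q}{B_n/\Gamma_k(P_n)}[B_n/\Gamma_l(P_n)]
\end{equation*}
whose kernel is $\Gamma_l(P_n)/\Gamma_k(P_n)$. Given a finite subgroup $H\cong G$ of $B_n/\Gamma_k(P_n)$, I would argue that $q$ restricts to an injection on $H$. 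For this, note that the kernel $\Gamma_l(P_n)/\Gamma_k(P_n)$ is itself a subgroup of the torsion-free group $P_n/\Gamma_k(P_n)$ by Part~(\ref{it:ptorsiona}), hence is torsion free. Now I would invoke the principle (essentially the content of the earlier \textit{Lemma~12 of \cite{GGO}} cited in the excerpt) that a homomorphism with torsion-free kernel restricts to an isomorphism on any finite subgroup: an element of $H$ in the kernel would be a finite-order element of a torsion-free group, hence trivial. Therefore $q|_H$ is injective and $q(H)\cong G$ is the desired subgroup of $B_n/\Gamma_l(P_n)$. For the normal case, if $H$ is normal in $B_n/\Gamma_k(P_n)$ then its image $q(H)$ is normal in the quotient $B_n/\Gamma_l(P_n)$ since $q$ is surjective, so normality is preserved.

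\medskip

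The final statement about $p$-torsion is then the contrapositive of the particular case $G=\Z_p$: if $B_n/\Gamma_k(P_n)$ had an element of order $p$, it would generate a subgroup isomorphic to $\Z_p$, which by the above would produce a subgroup isomorphic to $\Z_p$ in $B_n/\Gamma_l(P_n)$, contradicting the assumed absence of $p$-torsion there. I expect no serious obstacle in this argument; the only point requiring care is the verification that $q$ restricts injectively to finite subgroups, which reduces cleanly to the torsion-freeness of the kernel established in Part~(\ref{it:ptorsiona}) combined with the cited lemma on homomorphisms with torsion-free kernel.
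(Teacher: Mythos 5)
Your proof is correct and follows essentially the same strategy as the paper: part (a) by induction on $k$ using the extension $1\to L_{k}(P_n)\to P_n/\Gamma_{k+1}(P_n)\to P_n/\Gamma_k(P_n)\to 1$ with torsion-free kernel and quotient, and part (b) by pushing a finite subgroup through a quotient map whose kernel is torsion free, so that the restriction to the finite subgroup is injective and normality is preserved under the surjection. The only (harmless) difference is that you apply this once to the single surjection $B_n/\Gamma_k(P_n)\to B_n/\Gamma_l(P_n)$, whose kernel $\Gamma_l(P_n)/\Gamma_k(P_n)$ you identify as torsion free via part (a), whereas the paper descends one step at a time by reverse induction on $j$ through the central extensions with kernel $\Gamma_{j-1}(P_n)/\Gamma_j(P_n)$.
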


\begin{proof}\mbox{}
\begin{enumerate}
\item The proof is by induction on $k$. If $k=2$ then $P_n/\Gamma_2(P_n)\cong \Z^{n(n-1)/2}$, which implies the result in this case. Suppose then that 
the result holds for some $k\geq 2$. Then the quotient of the short exact sequence~\reqref{gamma23P} is torsion free by induction, and the kernel is torsion free by the results mentioned at the beginning of \resec{rankGamma}. It follows that $P_n/{\Gamma_{k+1}(P_n)}$ is also torsion free.

\item Assume that $B_n/\Gamma_k(P_n)$ possesses a (normal) subgroup isomorphic to $G$. For all $j\geq 2$, we have a central extension of the form:
\begin{equation}\label{eq:gammaquot}
1 \to \Gamma_{j-1}(P_n)/\Gamma_j(P_n) \to B_{n}/\Gamma_j(P_n) \to B_{n}/\Gamma_{j-1}(P_n) \to 1.
\end{equation}
As we mentioned above, the kernel of the short exact sequence~\reqref{gammaquot} is torsion free,
so the restriction of the homomorphism $B_{n}/\Gamma_j(P_n) \to B_{n}/\Gamma_{j-1}(P_n)$ to this subgroup is injective, and $B_{n}/\Gamma_{j-1}(P_n)$ also has a (normal) subgroup isomorphic to $G$. The first part of the statement then follows by reverse induction on $j$. In particular, if $B_n/\Gamma_k(P_n)$ has $p$-torsion, the second part of the statement follows by taking $G=\Z_{p}$.\qedhere
\end{enumerate}
\end{proof}

We now prove \reth{acgroups}.

\begin{proof}[Proof of \reth{acgroups}]
Let $n,k\geq 3$. 
In order to prove that the group $B_n/\Gamma_k(P_n)$ is almost-crystallographic, by \reth{dekimpe}, it suffices to show that it does not have finite (non-trivial) normal subgroups.
Suppose on the contrary that it possesses such a subgroup $H$.
Taking $l=2$ (resp.~$l=1$) in \relem{ptorsion}(\ref{it:ptorsionb}), it follows that $B_n/\Gamma_2(P_n)$ (resp.~$\sn$) has a normal subgroup $H'$ (resp.~$\widehat{H}$) isomorphic to $H$. Hence the groups $\widehat{H}, H'$ and $H$
are either isomorphic to $\sn$, or to the alternating subgroup $\an$, or if $n=4$, to $\Z_{2}\oplus \Z_{2}$. If $n\geq 4$, or if $n=3$ and $\widehat{H}=\sn[3]$, it follows that $H'$ possesses $2$-torsion, but this contradicts~\cite[Theorem~2]{GGO}. It remains to analyse the case $n=3$ and $\widehat{H}=\an[3]\cong \Z_{3}$. Then $H'$ has $3$-torsion, and since the elements of $B_3/\Gamma_2(P_3)$ of order $3$ are pairwise conjugate~\cite[Theorem~5]{GGO}, and $H'$ is normal in $B_3/\Gamma_2(P_3)$, the only elements of $B_3/\Gamma_2(P_3)$ of order $3$ are those belonging to $H'$, but this contradicts the fact that $B_3/\Gamma_2(P_3)$ possesses infinitely many elements of order $3$~\cite[Proposition~21]{GGO}. This proves the first part of the statement of \reth{acgroups}.
For the second part, the dimension of the almost-crystallographic group $B_n/\Gamma_k(P_n)$ is equal to the Hirsch length of $P_n/\Gamma_k(P_n)$ given in \repr{nilmanifold}, and the associated holonomy group is the quotient group $B_n/\Gamma_k(P_n)/P_n/\Gamma_k(P_n)$, which is isomorphic to $\sn$.
\end{proof}

\begin{rems}\mbox{}\label{rem:acgen}
\begin{enumerate}
\item\label{it:acgena}  As in~\cite[Corollary~13]{GGO}, the first part of \reth{acgroups} may be generalised as follows: let $n,k\geq 3$, and let $H$ be a subgroup of $\sn$. Then the group $\sigma^{-1}(H)/\Gamma_{k}(P_{n})$ is an almost almost-crystallographic group whose holonomy group is $H$ and whose dimension is equal to that of $B_{n}/\Gamma_{k}(P_{n})$.
\item 
Using \reth{acgroups}, we may compute the dimension of the almost-crystallographic group $B_n/\Gamma_k(P_n)$. In Table~\ref{table:table1}, we exhibit these dimensions for small values of $n$ and $k$. This gives us an idea about their growth in terms of $k$ and $n$.
\begin{table}[h!]
\centering
\begin{tabular}{ |c||c|c|c|c| } 
 \hline
 \diagbox{$k$}{$n$}
 & $\mathbf{3}$ & $\mathbf{4}$ & $\mathbf{5}$ & $\mathbf{6}$\\
 \hline\hline
$\mathbf{2}$ & 3 & 6 & 10 & 15 \\ 
 \hline
$\mathbf{3}$ & 4 & 10 & 20 & 35 \\ 
 \hline
$\mathbf{4}$ & 6 & 20 & 50 & 105 \\
 \hline
$\mathbf{5}$ & 9 & 41 & 131 & 336 \\
 \hline
\end{tabular}
\caption{The dimension of the almost-crystallographic group $B_n/\Gamma_k(P_n)$ for $n=3,4,5,6$ and $k=2,3,4,5$.}
\label{table:table1}
\end{table}
\end{enumerate}
\end{rems}




%
%

In order to prove \reth{almostb}, we first recall the following lemma from~\cite{GGO}.

\begin{lem}[{\cite[Lemma~28]{GGO}}]\label{lem:lem28}
Let $k,n\geq 3$ and $r\geq 0$ such that $k$ is odd and $r+k\leq n$, and define $\delta_{r,k},\alpha_{r,k}\in B_n/\Gamma_2(P_n)$ by:
\begin{equation}\label{eq:eq14}
\text{$\delta_{r,k}=\sigma_{r+k-1}\cdots \sigma_{r+\frac{k+1}{2}} \sigma_{r+\frac{k-1}{2}}^{-1}\cdots \sigma_{r+1}^{-1}$ and $\alpha_{r,k}=\sigma_{r+1}\cdots \sigma_{r+k-1}$.}
\end{equation}
Then $\delta_{r,k}$ is of order $k$ in $B_n/\Gamma_2(P_n)$, and satisfies:
\begin{equation}\label{eq:eq15}
\delta_{r,k}=\bigl(A_{r+\frac{k+1}{2}, r+k} A_{r+\frac{k+3}{2}, r+k} \cdots A_{r+k-1, r+k}\bigr) \alpha_{r,k}^{-1}.
\end{equation}
\end{lem}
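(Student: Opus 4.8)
The plan is to prove the two assertions in turn, obtaining the explicit expression~\reqref{eq15} first and then exploiting it to compute the order. Although the statement lives in $B_n/\Gamma_2(P_n)$, the formula~\reqref{eq15} in fact holds already in $B_n$, so I would begin there. Forming the product $\delta_{r,k}\alpha_{r,k}$, the negative half $\sigma_{r+\frac{k-1}{2}}^{-1}\cdots\sigma_{r+1}^{-1}$ of $\delta_{r,k}$ cancels termwise against the initial segment $\sigma_{r+1}\cdots\sigma_{r+\frac{k-1}{2}}$ of $\alpha_{r,k}$, leaving the positive word $\delta_{r,k}\alpha_{r,k}=(\sigma_{r+k-1}\cdots\sigma_{r+\frac{k+1}{2}})(\sigma_{r+\frac{k+1}{2}}\cdots\sigma_{r+k-1})$. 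Restricting to the sub-braid group on the strands $r+\frac{k+1}{2},\ldots,r+k$, this is exactly the classical pure-braid identity $(\sigma_{j-1}\cdots\sigma_i)(\sigma_i\cdots\sigma_{j-1})=A_{i,j}A_{i+1,j}\cdots A_{j-1,j}$ (readily checked by induction on the number of strands), and with $i=r+\frac{k+1}{2}$, $j=r+k$ it yields precisely~\reqref{eq15}.

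For the order, set $P=A_{r+\frac{k+1}{2},\,r+k}\cdots A_{r+k-1,\,r+k}$, so that $\delta_{r,k}=P\alpha_{r,k}^{-1}$ by the first part. The permutation $\overline{\sigma}(\delta_{r,k})$ is the $k$-cycle $(r+1,\ldots,r+k)$, hence the order of $\delta_{r,k}$ is a multiple of $k$, and it remains to prove $\delta_{r,k}^{k}=1$. Expanding in $B_n/\Gamma_2(P_n)$ and moving each copy of $\alpha_{r,k}^{-1}$ to the right gives $\delta_{r,k}^{k}=\big(\prod_{i=0}^{k-1}\alpha_{r,k}^{-i}\,P\,\alpha_{r,k}^{i}\big)\,\alpha_{r,k}^{-k}$, where every factor now lies in the \emph{abelian} group $P_n/\Gamma_2(P_n)$. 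By \repr{prop12}, conjugation by $\alpha_{r,k}^{-i}$ sends each basis element $A_{p,q}$ to $A_{\pi^{i}(p),\pi^{i}(q)}$, where $\pi=\overline{\sigma}(\alpha_{r,k})$ is the cyclic permutation of $\{r+1,\ldots,r+k\}$.

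Writing $P_n/\Gamma_2(P_n)$ additively, the computation then reduces to two combinatorial identities: (i) $\alpha_{r,k}^{k}$ equals the full twist on the strands $r+1,\ldots,r+k$, which in the abelianisation is $\sum_{r+1\le p<q\le r+k}A_{p,q}$ (the standard relation $(\sigma_1\cdots\sigma_{k-1})^{k}=\Delta^{2}$); and (ii) $\sum_{i=0}^{k-1}\pi^{i}\!\cdot\! P$ equals the same full twist. For (ii), identifying $\{r+1,\ldots,r+k\}$ with $\Z/k$, the generators occurring in $P$ are exactly the $A_{s,\,r+k}$ whose two strands are at cyclic distances $1,2,\ldots,\frac{k-1}{2}$; as $i$ runs over $0,\ldots,k-1$ the cyclic shift $\pi^{i}$ carries each such generator through all $k$ pairs at that fixed distance, and since $k$ is odd the distances $1,\ldots,\frac{k-1}{2}$ account for every unordered pair $\{p,q\}$ exactly once. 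Hence $\sum_{i=0}^{k-1}\pi^{i}\!\cdot\! P$ is the full twist, and combining (i) and (ii) gives $\delta_{r,k}^{k}=(\text{full twist})-(\text{full twist})=0$, i.e.\ $\delta_{r,k}^{k}=1$; together with the $k$-cycle observation this shows $\delta_{r,k}$ has order exactly $k$.

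The main obstacle is identity (ii): the precise splitting of $\delta_{r,k}$ into its positive part (indices $r+\frac{k+1}{2},\ldots,r+k-1$) and negative part (indices $r+1,\ldots,r+\frac{k-1}{2}$) is engineered exactly so that the $\pi$-orbit of $P$ tiles the set of all pairs once. This is where the hypothesis that $k$ is \emph{odd} is essential: for even $k$ the antipodal pairs at cyclic distance $k/2$ would be double-counted and the cancellation would fail. The remaining ingredients, namely the pure-braid product identity and $(\sigma_1\cdots\sigma_{k-1})^{k}=\Delta^{2}$, are classical and can be dispatched by a short induction or by citation.
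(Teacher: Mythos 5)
This lemma is quoted from \cite[Lemma~28]{GGO} and the present paper gives no proof of it, so there is no in-text argument to compare against; your proposal is, however, a correct and complete proof. Both of your steps --- telescoping $\delta_{r,k}\alpha_{r,k}$ into $(\sigma_{r+k-1}\cdots\sigma_{r+\frac{k+1}{2}})(\sigma_{r+\frac{k+1}{2}}\cdots\sigma_{r+k-1})=A_{r+\frac{k+1}{2},r+k}\cdots A_{r+k-1,r+k}$, and then verifying $\delta_{r,k}^{k}=1$ inside the free Abelian kernel $P_n/\Gamma_2(P_n)$ by matching $\sum_{i=0}^{k-1}\pi^{i}\cdot P$ against the abelianised full twist $\alpha_{r,k}^{k}$, with oddness of $k$ ensuring the cyclic-distance orbits tile the set of pairs exactly once --- are precisely those of the original proof in \cite{GGO}, so your route is essentially the same as the source's.
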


Let $n\geq 3$. We now describe a basis of the free Abelian group $\Gamma_2(P_n)/\Gamma_3(P_n)$ of rank $\binom{n}{3}$ by \rerem{rank1}(\ref{it:rank1b}). 
This group is generated by the elements of the form $[A_{i,j},A_{r,s}]$, where $1\leq i<j\leq n$ and $1\leq r<s\leq n$, but by \req{purerelations}, $[A_{i,j},A_{r,s}]=1$ in $\Gamma_2(P_n)/\Gamma_3(P_n)$ if $\operatorname{\text{Card}}\brak{i,j,r,s}\in \brak{2,4}$, so $\Gamma_2(P_n)/\Gamma_3(P_n)$ is generated by the elements of the form $[A_{i,j},A_{r,s}]$, where $\operatorname{\text{Card}}\brak{i,j,r,s}=3$. Let $\alpha_{i,j,k}=[A_{i,j}, A_{j,k}]$ for all $1\leq i<j<k\leq n$. Using~\reqref{purerelations} once more, the following equalities hold in $\Gamma_2(P_n)/\Gamma_3(P_n)$: 
\begin{equation}\label{eq:alphaijk}
\text{$[A_{i,k}, A_{j,k}]=\alpha_{i,j,k}^{-1}$, $[A_{i,j}, A_{i,k}]=\alpha_{i,j,k}^{-1}$ and $[A_{j,k}, A_{i,j}]=\alpha_{i,j,k}^{-1}$,}
\end{equation}
and if $\alpha_{i,j,k}=\alpha_{r,s,t}^{\pm 1}$ then $\{i,j,k\}=\{r,s,t\}$. So the set
\begin{equation}\label{eq:basis1}
\mathcal{B}=\setl{\alpha_{i,j,k}}{1\leq i<j<k\leq n},
\end{equation}
generates $\Gamma_2(P_n)/\Gamma_3(P_n)$. Since $\operatorname{\text{Card}}(\mathcal{B})=\binom{n}{3}=\operatorname{\text{rank}}(\Gamma_2(P_n)/\Gamma_3(P_n))$, it follows that $\mathcal{B}$ is a basis for $\Gamma_2(P_n)/\Gamma_3(P_n)$, in particular, $\alpha_{i,j,k}\neq 1$.
Applying~\req{conjugalpha} to the Artin generators of $B_{n}$, and using \req{alphaijk}, the action of $B_{n}/\Gamma_{3}(P_{n})$ on the elements of $\mathcal{B}$ given in~\reqref{basis1} is as follows:
\begin{equation}\label{eq:action1}
\sigma_k \alpha_{r,s,t}\sigma_k^{-1}= 
\begin{cases}
\alpha_{\sigma_k(r), \sigma_k(s), \sigma_k(t)} & \text{if $\sigma_k(r)< \sigma_k(s)< \sigma_k(t)$}\\
\alpha_{\sigma_k(s), \sigma_k(r), \sigma_k(t)}^{-1} & \text{if   $\sigma_k(r)> \sigma_k(s)$}\\
\alpha_{\sigma_k(r), \sigma_k(t), \sigma_k(s)}^{-1} & \text{if $\sigma_k(s)> \sigma_k(t)$}
\end{cases}
\end{equation}
for all $1\leq k\leq n-1$ and $1\leq r<s<t\leq n$.
Using the action of $B_n$ on $P_n$ given in \req{conjugAij1} and \req{alphaijk}, one may check that in $B_n/\Gamma_3(P_n)$: 
\begin{equation}\label{eq:action2}
\sigma_k A_{i,j}\sigma_k^{-1}= \begin{cases}
A_{i,j-1}\alpha_{i,j-1,j}^{-1} & \text{if  $j=k+1$ and $i<k$}\\
A_{i-1,j}\alpha_{i-1,i,j}^{-1} & \text{if $i=k+1$}\\
A_{\sigma_k(i), \sigma_k(j)} & \text{otherwise.}
\end{cases}
\end{equation}

\begin{rem}\label{rem:actinj}
Consider the action by conjugation of $B_{n}/\Gamma_{3}(P_{n})$ on $\Gamma_2(P_n)/\Gamma_3(P_n)$ described by~\reqref{action1}. 
The restriction of this action to $P_{n}/\Gamma_{3}(P_{n})$ on $\Gamma_2(P_n)/\Gamma_3(P_n)$  is trivial, and so the action of $B_{n}/\Gamma_{3}(P_{n})$
factors through $\sn$. Further, if $n>3$, the action of $\sn$ is injective, \emph{i.e.}\ if $\alpha$ is a non-trivial permutation, then the induced automorphism is different from the identity by~\reqref{conjugalpha},~\reqref{alphaijk} and the fact that $\mathcal{B}$ is a basis.


\end{rem}

\begin{proof}[Proof of \reth{almostb}]
Let $n,k\geq 3$. By~\cite[Theorem~2]{GGO}, $B_n/\Gamma_2(P_n)$ has no $2$-torsion, so applying \relem{ptorsion}(\ref{it:ptorsionb}), we conclude that this is also the case for $B_n/\Gamma_k(P_n)$. 
To complete the proof, using \relem{ptorsion}(\ref{it:ptorsionb}) once more, it suffices to prove that $B_n/\Gamma_3(P_n)$ has no $3$-torsion. Suppose on the contrary that $B_n/\Gamma_3(P_n)$ possesses an element $\rho$ of order $3$. 
Let $m$ denote the number of conjugacy classes of elements of $B_n/\Gamma_2(P_n)$ of order $3$. By~\cite[Theorem~5]{GGO}, $m$ is equal to the number of conjugacy classes of elements of order $3$ in $\sn[n]$, so $m=\lfloor n/3\rfloor\geq 1$, and using~\cite[Proposition~29]{GGO}, representatives of the conjugacy classes of elements of order $3$ in $B_n/\Gamma_2(P_n)$ are given by the elements of the form $\eta_{t}=\delta_{3t,3}\cdots\delta_{6,3}\delta_{3,3}\delta_{0,3}$, where $0\leq t \leq m-1$. Conjugating $\rho$ if necessary and using the short exact sequence~\reqref{gammaquot}, there exists $\theta\in \Gamma_2(P_n)/\Gamma_3(P_n)$ such that $\rho=\theta\eta_{t}$ for some $0\leq t \leq m-1$ (here $\eta_{t}$ is considered as an element of $B_n/\Gamma_3(P_n)$).
Note that 
$\eta_t$ acts by conjugation 
on $\overline{\sigma}^{-1}(\sn[3]\times 1\times \cdots \times 1)$ and $\overline{\sigma}^{-1}(1\times 1 \times 1\times \sn[n-3])$,
where $\overline{\sigma}$ is as in~\reqref{sespnquot}.
Further, by~\reqref{conjugalpha} and~\reqref{alphaijk},
we see that $\eta_t \alpha_{1,2,3} \eta_t^{-1}=\alpha_{1,2,3}$ in $B_n/\Gamma_3(P_n)$, and that if $\theta'$ is a word in the elements of $\mathcal{B}\setminus \{ \alpha_{1,2,3}\}$ then $\eta_t\theta'\eta_t^{-1}$, written in reduced form as a word in $P_n/\Gamma_3(P_n)$, does not contain $\alpha_{1,2,3}$.
Let $\theta=\alpha_{1,2,3}^l\theta_1$, where $l\in \Z$ and $\theta_1$ is a word in the elements of $\mathcal{B}\setminus \brak{\alpha_{1,2,3}}$. For all $i\in \brak{0,\ldots, t}$, we have: 
\begin{align*}
\delta_{3i,3}^{3}&=(\sigma_{3i+2}\sigma_{3i+1}^{-1})^{3}= \sigma_{3i+2}^{2} \sigma_{3i+2}^{-1}\sigma_{3i+1}^{-1}\sigma_{3i+2}\sigma_{3i+1}^{-1}\sigma_{3i+2}\sigma_{3i+1}^{-1}\\
&=\sigma_{3i+2}^{2} \sigma_{3i+1}\sigma_{3i+2}^{-1}\sigma_{3i+1}^{-2}\sigma_{3i+2}\sigma_{3i+1}^{-1} =\sigma_{3i+2}^{2} \sigma_{3i+1}^{2}\sigma_{3i+2}^{-2}\sigma_{3i+1}^{-2}= [\sigma_{3i+1}^{2}, \sigma_{3i+2}^{2}]^{-1}\\
&= [A_{3i+1,3i+2}, A_{3i+2,3i+3}]^{-1}=\alpha_{3i+1,3i+2,3i+3}^{-1},
\end{align*}
and since the $\delta_{3i,3}$ commute pairwise, it follows that $\eta_{t}^{3}=\alpha_{1,2,3}^{-1}\cdots \alpha_{3t+1,3t+2,3t+3}^{-1}$. Hence:
\begin{align*}
\rho^{3} &=(\theta\eta_t)^3 = \alpha_{1,2,3}^l\theta_1\eta_t \alpha_{1,2,3}^l\theta_1\eta_t^{-1}\eta_t^2 \alpha_{1,2,3}^l\theta_1\eta_t^{-2}\eta_t^3\\
& =\alpha_{1,2,3}^{3l}\theta_1 \ldotp \eta_t \theta_1\eta_t^{-1}\ldotp \eta_t^2 \theta_1\eta_t^{-2}\ldotp\alpha_{1,2,3}^{-1}\cdots \alpha_{3t+1,3t+2,3t+3}^{-1}
\end{align*}
in $\Gamma_2(P_n)/\Gamma_3(P_n)$.
As we saw above, the words $\eta_t \theta_1\eta_t^{-1}$ and $\eta_t^2 \theta_1\eta_t^{-2}$ (written in reduced form in $P_n/\Gamma_3(P_n)$) do not contain $\alpha_{1,2,3}$, and since $\rho^{3}=1$, it follows by comparing the coefficients of $\alpha_{1,2,3}$ that $3l=1$, which yields a contradiction. So $B_n/\Gamma_3(P_n)$ has no $3$-torsion as required.
\end{proof}

The following result is a consequence of Theorems~\ref{th:acgroups} and~\ref{th:almostb} and the definition of almost-Bieberbach groups.

\begin{cor}\label{cor:almostb}
Let $n\geq 3$ and let $k\geq 3$. Let $H$ be a subgroup of $\sn$ whose order is equal to $2^s3^t$ for some $s,t\in \N$. 
Then $\sigma^{-1}(H)/\Gamma_k(P_n)$ is an almost-Bieberbach group. In particular, the groups $B_3/\Gamma_3(P_3)$ and $B_4/\Gamma_3(P_4)$ are almost-Bieberbach groups of dimension $4$ and $10$ respectively.
\end{cor}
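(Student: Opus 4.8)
The plan is to deduce the corollary almost directly from the two theorems that precede it, treating it as a packaging result rather than a new computation. The key observation is that Definition~\ref{def:almostc} defines an almost-Bieberbach group as an almost-crystallographic group that is torsion free, so I need only verify these two properties for $\sigma^{-1}(H)/\Gamma_k(P_n)$.

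First I would establish that $\sigma^{-1}(H)/\Gamma_k(P_n)$ is almost-crystallographic. This follows immediately from the generalisation recorded in \rerems{acgen}(\ref{it:acgena}): for any subgroup $H$ of $\sn$, the group $\sigma^{-1}(H)/\Gamma_k(P_n)$ is almost-crystallographic with holonomy group $H$ and the same dimension as $B_n/\Gamma_k(P_n)$. Alternatively, if one wishes to argue from scratch, $\sigma^{-1}(H)/\Gamma_k(P_n)$ fits into a short exact sequence $1\to P_n/\Gamma_k(P_n) \to \sigma^{-1}(H)/\Gamma_k(P_n) \to H\to 1$ obtained by restricting~\reqref{sespnquot}, so it is polycyclic-by-finite with the nilpotent subgroup $P_n/\Gamma_k(P_n)$, and one checks it has no non-trivial finite normal subgroup exactly as in the proof of \reth{acgroups}, using \relem{ptorsion}(\ref{it:ptorsionb}) and the $2$- and $3$-torsion analysis; then \reth{dekimpe} applies.

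Next I would establish torsion-freeness, which is where the hypothesis $\ord{H}=2^s3^t$ enters. Since $\sigma^{-1}(H)/\Gamma_k(P_n)$ is a subgroup of $B_n/\Gamma_k(P_n)$, any torsion element would be a finite-order element of $B_n/\Gamma_k(P_n)$ lying in $\sigma^{-1}(H)/\Gamma_k(P_n)$. If $\rho$ were a non-trivial element of finite order, then by \relem{eleconj}-type reasoning (the kernel $P_n/\Gamma_k(P_n)$ is torsion free by \relem{ptorsion}(\ref{it:ptorsiona})), the restriction of $\overline{\sigma}$ to the cyclic group generated by $\rho$ is injective, so $\overline{\sigma}(\rho)$ is a non-trivial element of $H$ of the same order as $\rho$. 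Hence the order of $\rho$ divides $\ord{H}=2^s3^t$, so $\rho$ has order divisible by $2$ or by $3$, and therefore some power of $\rho$ is an element of $B_n/\Gamma_k(P_n)$ of order exactly $2$ or exactly $3$. This contradicts \reth{almostb}, which asserts that $B_n/\Gamma_k(P_n)$ has no elements of order $2$ nor of order $3$. Thus $\sigma^{-1}(H)/\Gamma_k(P_n)$ is torsion free, and being almost-crystallographic, it is almost-Bieberbach.

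Finally, for the two named examples I would specialise to $H=\sn[3]$ and $H=\sn[4]$, whose orders $6=2\cdot 3$ and $24=2^3\cdot 3$ satisfy the hypothesis, giving $\sigma^{-1}(H)=B_3$ and $B_4$ respectively; the dimensions $4$ and $10$ are then read off from the formula $\binom{n}{2}+\binom{n}{3}$ for the dimension of $B_n/\Gamma_3(P_n)$ in \reth{acgroups}. I do not expect any serious obstacle here, as the work has already been done in the two theorems; the only point requiring a little care is the torsion-freeness argument, where one must correctly pass from an arbitrary finite-order element to one of prime order $2$ or $3$ by taking an appropriate power, and invoke the injectivity of $\overline{\sigma}$ on finite subgroups, which rests on the torsion-freeness of the kernel.
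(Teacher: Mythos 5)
Your proposal is correct and follows essentially the same route as the paper, which likewise deduces the first part from \reth{acgroups}, \reth{almostb} and \redef{almostc}, and the second part by taking $H=\sn[3]$, $H=\sn[4]$ and reading off the dimensions; you merely make explicit the two details the paper leaves implicit, namely that almost-crystallographicity of the subgroup comes from the generalisation in \rerems{acgen}, and that a torsion element would have order dividing $2^{s}3^{t}$ and hence yield, after taking a suitable power, an element of order exactly $2$ or $3$, contradicting \reth{almostb}. The only cosmetic caveat is that the lemma on injectivity of a homomorphism with torsion-free kernel restricted to finite subgroups is not actually stated in this paper (it appears only in its predecessor), but you supply the underlying argument yourself via \relem{ptorsion}, so nothing is missing.
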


\begin{proof}
The first part is a consequence of Theorems~\ref{th:acgroups} and~\ref{th:almostb} and \redef{almostc}.
The second part follows by taking $H=\sn$ for $n\in\brak{3,4}$, and by applying the short exact sequence~\reqref{sespnquot} and the results of Table~\ref{table:table1}.
\end{proof}


\section{Torsion elements, conjugacy classes and a presentation of $B_n/\Gamma_3(P_n)$}
\label{sec:torsion}


Let $n\in \N$.
The main aim of this section is to prove Theorems~\ref{th:torsion} and~\ref{th:conjugacy} that describe the torsion elements of $B_n/\Gamma_3(P_n)$ and their conjugacy classes. We also exhibit a presentation of the quotient group $B_n/\Gamma_3(P_n)$ in \repr{nil}.

\subsection{A partition of the basis of $\Gamma_2(P_3)/\Gamma_3(P_3)$}\label{sec:partition}

Let $\delta_n$ be equal to the element $\delta_{0,n}$ of \req{eq14}, considered as an element of the group $B_n/\Gamma_3(P_n)$. By \relem{lem28}, $\delta_n^n\in \Gamma_2(P_n)/\Gamma_3(P_n)$.
We start by partitioning  the basis $\mathcal{B}$ of $\Gamma_2(P_n)/\Gamma_3(P_n)$ given in~\reqref{basis1} into orbits for the action by conjugation by $\delta_{n}$.

\begin{prop}\label{prop:basis}
Let $n\geq 5$. Then the basis $\mathcal{B}$ of $\Gamma_2(P_n)/\Gamma_3(P_n)$ given in \req{basis1} is invariant with respect to the action of $\Z_n$ given by conjugation by $\delta_n$. Further, under this action:
\begin{enumerate}[(a)]
\item\label{it:basisa} if $\operatorname{\text{gcd}}(n,3)=1$, $\mathcal{B}$ is the disjoint union of $\frac{(n-1)(n-2)}{6}$ orbits each of length $n$.

\item if $\operatorname{\text{gcd}}(n,3)\neq 1$, $\mathcal{B}$ is the disjoint union of $\frac{n(n-3)}{6}+1$ orbits, one of which is of length $\frac{n}{3}$, and the remaining $\frac{n(n-3)}{6}$ orbits are each of length $n$.

%
\end{enumerate}
\end{prop}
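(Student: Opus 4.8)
The plan is to show that, under the identification of the basis element $\alpha_{i,j,k}$ with the $3$-element subset $\{i,j,k\}$ of the index set $\{1,\ldots,n\}$, the conjugation action of $\delta_n$ is nothing but the translation action of $\Z_n$ on the $3$-subsets of $\Z_n$, and then to carry out a routine orbit count. First I would determine the permutation $\overline{\sigma}(\delta_n)$: since $\sigma$ sends each $\sigma_i^{\pm 1}$ to the adjacent transposition $(i,i+1)$, the word \req{eq14} defining $\delta_n=\delta_{0,n}$ maps to the product of all adjacent transpositions $(n-1,n)(n-2,n-1)\cdots(1,2)$, which with the composition convention of \resec{artin} is the $n$-cycle $(1,2,\ldots,n)$. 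In particular $\overline{\sigma}(\delta_n)$ has order $n$, so by \rerem{actinj} the induced action on $\Gamma_2(P_n)/\Gamma_3(P_n)$ factors through $\sn$ and $\delta_n^n$ acts trivially; hence conjugation by $\delta_n$ genuinely defines an action of $\Z_n$ on $\mathcal{B}$.

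Next I would compute this action on a basis element. By \req{conjugalpha} we have $\delta_n\alpha_{i,j,k}\delta_n^{-1}=[A_{\pi(i),\pi(j)},A_{\pi(j),\pi(k)}]$, where $\pi=\overline{\sigma}(\delta_n^{-1})$ is the inverse cycle, so that $\pi(x)=x-1$ for $x\geq 2$ and $\pi(1)=n$. When $i\geq 2$ the three indices all drop by one and stay in increasing order, giving $\alpha_{i-1,j-1,k-1}$. When $i=1$ the index $1$ wraps around to $n$, producing $[A_{j-1,n},A_{j-1,k-1}]$; here I would use the relations \req{alphaijk} (together with $A_{n,j-1}=A_{j-1,n}$) to rewrite this commutator in the standard form of \req{basis1}, checking that $[A_{j-1,n},A_{j-1,k-1}]=\alpha_{j-1,k-1,n}$ with no inverse appearing. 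This wrap-around computation is the one genuinely delicate point, since a priori the relations \req{alphaijk} could introduce a sign; verifying that they do not is what shows that $\delta_n$ permutes $\mathcal{B}$ as a set, proving the invariance claim. In both cases the underlying $3$-subset $\{i,j,k\}$ is simply translated by $-1$ modulo $n$, so the action of $\delta_n$ on $\mathcal{B}$ is exactly the translation action of $\Z_n$ on the $3$-subsets of $\Z_n$.

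It then remains to count the orbits of this translation action, there being $\binom{n}{3}$ subsets in total. For a $3$-subset $S$, its stabiliser $H\leq\Z_n$ satisfies $S+H=S$, so $S$ is a disjoint union of cosets of $H$ and therefore $\lvert H\rvert$ divides $3$; as $\lvert H\rvert$ also divides $n$, we get $\lvert H\rvert\in\{1,3\}$, the value $3$ occurring only when $\gcd(n,3)\neq 1$. If $\gcd(n,3)=1$, every stabiliser is trivial, so all orbits have length $n$ and their number is $\binom{n}{3}/n=\frac{(n-1)(n-2)}{6}$, which proves~(\ref{it:basisa}). If $3\mid n$, the subsets with stabiliser of order $3$ are precisely the $n/3$ cosets of the unique order-$3$ subgroup $H_0$ of $\Z_n$; since $\Z_n/H_0$ acts simply transitively on these cosets, they form a single orbit of length $n/3$. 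Every other subset has trivial stabiliser, giving $\bigl(\binom{n}{3}-\tfrac{n}{3}\bigr)/n=\frac{n(n-3)}{6}$ orbits of length $n$, and hence $\frac{n(n-3)}{6}+1$ orbits in all. As a consistency check one verifies $\frac{n(n-3)}{6}\cdot n+\frac{n}{3}=\binom{n}{3}$, completing~(b). The only real obstacle is the sign bookkeeping in the wrap-around case; once the action is identified with translation on subsets, the remaining count is elementary.
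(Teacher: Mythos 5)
Your proposal is correct and follows essentially the same route as the paper: both establish via \reqref{conjugalpha} and \reqref{alphaijk} that conjugation by $\delta_n$ sends $\alpha_{i,j,k}$ to $\alpha_{i-1,j-1,k-1}$ (resp.\ $\alpha_{j-1,k-1,n}$ when $i=1$) with no inverse appearing, so that the action is translation by $-1$ on $3$-subsets of $\Z_n$, and then count orbits. The only difference is cosmetic: where the paper solves the explicit congruences for an $l$ fixing a triple and sums them to deduce $3\mid n$, you package the same divisibility via the orbit--stabiliser theorem (the stabiliser's order divides both $3$ and $n$), identifying the exceptional orbit with the cosets of the order-$3$ subgroup; both yield the identical counts.
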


\begin{proof}
Let $n\geq 5$, and let $1\leq i<j<k\leq n$. Using~\reqref{conjugalpha},~\reqref{eq14} and~\reqref{alphaijk}, we see that:
\begin{equation}\label{eq:conjdelta}
\delta_n \alpha_{i,j,k} \delta_n^{-1}=\begin{cases}
\alpha_{i-1,j-1,k-1} & \text{if $i\geq 2$}\\
[A_{n,j-1},A_{j-1,k-1}]= [A_{j-1,n},A_{j-1,k-1}] =\alpha_{j-1,k-1,n} & \text{if $i=1$.}
\end{cases}
\end{equation}
 So the action of conjugation by $\delta_n$ permutes the elements of $\mathcal{B}$, which proves the first part of the statement. By~\reqref{conjdelta}, $\delta_n^n \alpha_{i,j,k} \delta_n^{-n}= \alpha_{i,j,k}$ for all $1\leq i<j<k\leq n$. Suppose that $1\leq l<n$ is such that $\delta_n^l \alpha_{i,j,k} \delta_n^{-l}= \alpha_{i,j,k}$. Then by~\reqref{conjdelta}, we have:
\begin{enumerate}[(i)]
\item\label{it:closei} $i-l+n=k$, $j-l=i$ and $k-l=j$, or
\item\label{it:closeii} $i-l+n=j$, $j-l+n=k$ and $k-l=i$.
\end{enumerate}
Summing the three equations in both cases, we see that $3\mid n$. In particular:
\begin{enumerate}[\textbullet]
\item if $3\nmid n$ then all of the orbits are of length $n$, and there are $\binom{n}{3}/n=\frac{(n-1)(n-2)}{6}$ orbits.
\item if $3\mid n$ then from~(\ref{it:closei}) and~(\ref{it:closeii}), we have either $l=\frac{n}{3}$ or $l=\frac{2n}{3}$. It follows that $j=i+\frac{n}{3}$ and $k=i+\frac{2n}{3}$, and for these values of $i,j$ and $k$, we obtain a single orbit of length $\frac{n}{3}$.  The remaining orbits are of length $n$, and so the number of such orbits is given by $(\binom{n}{3}-\frac{n}{3})/n$, which is equal to $\frac{n(n-3)}{6}$.\qedhere
\end{enumerate} 
\end{proof}


\begin{rem}
Let $n=3q+r$, where $q\in \N$ and $r\in \brak{0, 1,2}$. 
Let $\mathcal{S}$ be the subset of $\mathcal{B}$ defined by:
\begin{equation}\label{eq:transversal}
\!\!\!\mathcal{S}\!=\!
\begin{cases}
\!\setl{\alpha_{1,j,k}}{\text{$2\leq j \leq q+1$ and $2j-1\leq k\leq n-(j-1)$}} &\!\!\! \text{if $r\neq 0$}\\
\!\setl{\alpha_{1,j,k}}{\text{$2\leq j \leq q$ and $2j-1\leq k\leq n-(j-1)$}}\cup\{\alpha_{1, n/3+1, 2n/3+1}\} &\!\!\! \text{if $r=0$.}
\end{cases}
\end{equation}
Then $\mathcal{S}$ is a transversal for the
action of $\delta_n$ on $\mathcal{B}$ described in \repr{basis},
and $\mathcal{B}$ 
may be rewritten as:
\begin{equation*}
\mathcal{B}\!=\!\begin{cases}
\setl{\delta_n^{t} \alpha_{1,j,k} \delta_n^{-t}}{\text{$2\leq j \leq q+1$, $2j-1\leq k\leq n-(j-1)$ and $0\leq t\leq n-1$}} & \!\!\text{if $r\neq 0$}\\
& \\
\setl{\delta_n^{t} \alpha_{1,j,k} \delta_n^{-t}}{\text{$2\leq j \leq q$, $2j-1\leq k\leq n-(j-1)$ and $0\leq t\leq n-1$}}\bigcup\\ \hspace*{5.75cm}\setl{\delta_n^{t} \alpha_{1, n/3+1, 2n/3+1} \delta_n^{-t}}{0\leq t\leq n/3-1}
& \!\!\text{if $r=0$.}
\end{cases}
\end{equation*}
\end{rem}


With the notation of \repr{basis}, let $\mathcal{O}(n)$ be equal to the number of orbits of the action on $\mathcal{B}$ by conjugation by $\delta_{n}$, and let $\mathcal{T}=\brak{b_{i,1}}_{1\leq i\leq \mathcal{O}(n)}$ be a transversal for this action (for example, we may take $\mathcal{T}$ to be the transversal $\mathcal{S}$ defined in~\reqref{transversal}). We choose $\mathcal{T}$ so that $b_{\mathcal{O}(n),1}$ is a representative of the orbit of length $n/3$ if $3\mid n$. For $1\leq i\leq \mathcal{O}(n)$, let $q(i)$ be the length of the orbit of $b_{i,1}$, so that $q(i)=n/3$ if $3\mid n$ and $i=\mathcal{O}(n)$, and $q(i)=n$ otherwise, and let:
\begin{equation}\label{eq:defbij}
\text{$b_{i,j}=\delta_{n}^{j-1} b_{i,1} \delta_{n}^{-(j-1)}$ for all $1\leq j\leq q(i)$.}
\end{equation}
 Then
$\mathcal{B}=\setl{b_{i,j}}{\text{$1\leq i\leq \mathcal{O}(n)$ and $1\leq j\leq q(i)$}}$.

\subsection{A presentation of $B_n/\Gamma_3(P_n)$}\label{sec:presgamma3}

In this section, we exhibit a presentation of $B_n/\Gamma_3(P_n)$ by applying the techniques of~\cite[Proposition~1, p.~139]{Johnson} to obtain a presentation of a group extension to the short exact sequence~\reqref{gamma23P}.


\begin{prop}\label{prop:nil}
Let $n\geq 3$. 
\begin{enumerate}[(a)]
\item\label{it:relnsalphaa} The group  $P_n/\Gamma_3(P_n)$ has a presentation given by the generating set $\mathcal{X}_{n}=\brak{A_{i,j}}_{1\leq i<j\leq n}\cup \brak{\alpha_{r,s,t}}_{1\leq r<s<t\leq n}$, subject to the following relations:
\begin{enumerate}[(i)]
\item\label{it:relnsalpha1} the $\alpha_{r,s,t}$ commute pairwise and with the $A_{i,j}$.
\item\label{it:relnsalpha2} $
[A_{i,j}, A_{l,m}]= \begin{cases}
\alpha_{i,j,m} & \text{if $\operatorname{\text{Card}}\brak{i,j,l,m}=3$ and $j=l$}\\
\alpha_{i,k,m}^{-1} & \text{if $\operatorname{\text{Card}}\brak{i,j,l,m}=3$, where $k=j$ if $i=l$, and $k=l$ if $j=m$}\\
1 & \text{if $\operatorname{\text{Card}}\brak{i,j,l,m}\in \brak{2,4}$.}
\end{cases}$
\end{enumerate}
\item\label{it:relnsalphab} The group  $B_n/\Gamma_3(P_n)$ has a presentation given by the generating set $\brak{\sigma_{k}}_{1\leq k\leq n-1}\cup \mathcal{X}_{n}$, 
subject to the following relations:
\begin{enumerate}[(i)]
\item relations~(\ref{it:relnsalphaa})(\ref{it:relnsalpha1}) and~(\ref{it:relnsalphaa})(\ref{it:relnsalpha2}) emanating from those of $P_n/\Gamma_3(P_n)$.
\item the Artin braid relations~\reqref{artin1}, viewed in $B_n/\Gamma_3(P_n)$.
\item the conjugacy relations described in~\reqref{action1} and~\reqref{action2}.
\end{enumerate}
\end{enumerate}
\end{prop}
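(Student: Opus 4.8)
The plan is to establish both presentations by the standard method for presenting a group extension, \cite[Proposition~1, p.~139]{Johnson}, applied to the two short exact sequences at our disposal. Recall the shape of this method: given an extension $1\to K\to G\to Q\to 1$ together with presentations $K=\langle X\mid R\rangle$ and $Q=\langle Y\mid S\rangle$ and a choice of lift $\tilde y\in G$ of each $y\in Y$, the group $G$ is presented by the generating set $X\cup\{\tilde y\}$ subject to (1) the relators $R$ of $K$; (2) for each relator $s\in S$, the relation equating the lift of $s$ with the word in $X$ that it represents in $K$; and (3) for each pair $(\tilde y,x)$, the relation expressing the conjugate $\tilde y\,x\,\tilde y^{-1}$ as a word in $X$.

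For part~(\ref{it:relnsalphaa}) I would take the central extension~\reqref{gamma23P} with $k=2$, namely $1\to \Gamma_2(P_n)/\Gamma_3(P_n)\to P_n/\Gamma_3(P_n)\to P_n/\Gamma_2(P_n)\to 1$. Here $K=\Gamma_2(P_n)/\Gamma_3(P_n)$ is free Abelian with basis $\mathcal{B}=\setl{\alpha_{r,s,t}}{1\leq r<s<t\leq n}$ from~\reqref{basis1}, so $R$ asserts that the $\alpha_{r,s,t}$ commute pairwise; and $Q=P_n/\Gamma_2(P_n)\cong\Z^{n(n-1)/2}$ is free Abelian on $\brak{A_{i,j}}$, so $S$ consists of the commutators $[A_{i,j},A_{l,m}]$. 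Lifting each $A_{i,j}$ to $A_{i,j}\in P_n/\Gamma_3(P_n)$, the extension is central (since $\Gamma_3(P_n)=[P_n,\Gamma_2(P_n)]$), so the conjugation relations of type~(3) merely say that the $\alpha_{r,s,t}$ commute with the $A_{i,j}$; together with $R$ this yields relation~(\ref{it:relnsalpha1}). The relations of type~(2) are obtained by evaluating each lifted commutator $[A_{i,j},A_{l,m}]$ inside $P_n/\Gamma_3(P_n)$: by~\reqref{purerelations} and~\reqref{alphaijk} these are trivial when $\operatorname{\text{Card}}\{i,j,l,m\}\in\{2,4\}$ and equal the appropriate $\alpha$-power otherwise, which is precisely relation~(\ref{it:relnsalpha2}).

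For part~(\ref{it:relnsalphab}) I would apply the same method to~\reqref{sespnquot} with $k=3$, that is $1\to P_n/\Gamma_3(P_n)\to B_n/\Gamma_3(P_n)\stackrel{\overline{\sigma}}{\to}\sn\to 1$. Now $K=P_n/\Gamma_3(P_n)$ carries the presentation just obtained in part~(\ref{it:relnsalphaa}), supplying relations (b)(i); for $Q=\sn$ I would use the Coxeter presentation on $s_1,\dots,s_{n-1}$, whose relators are the braid relators together with the involution relators $s_k^2$. Lifting $s_k$ to $\sigma_k$, the type~(2) relations split into two families: the braid relators lift to the Artin relations~\reqref{artin1} read in $B_n/\Gamma_3(P_n)$ (their $K$-part being trivial since these already hold in $B_n$), giving (b)(ii), while each involution relator lifts to $\sigma_k^2=A_{k,k+1}$, consistent with~\reqref{defaij}. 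Finally the type~(3) relations record the conjugation of each $\sigma_k$ on the generators of $K$, which is exactly the content of~\reqref{action1} (on the $\alpha_{r,s,t}$) and~\reqref{action2} (on the $A_{i,j}$), giving (b)(iii). Since these action formulae were already derived, the proof reduces to assembling them and invoking \cite[Proposition~1, p.~139]{Johnson}.

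The main obstacle, in both parts, lies in the type~(2) relations. For part~(\ref{it:relnsalphaa}) one must correctly reduce the lifted commutators $[A_{i,j},A_{l,m}]$ in $P_n/\Gamma_3(P_n)$; this is genuine commutator calculus driven by~\reqref{purerelations}, and its validity rests on the prior identification of $\mathcal{B}$ as an honest basis of the free Abelian group $\Gamma_2(P_n)/\Gamma_3(P_n)$, which guarantees both that the $\alpha_{r,s,t}$ are well defined and that no hidden relations among them survive. For part~(\ref{it:relnsalphab}) the delicate point is that the extension is neither split nor central, so the lifts $\sigma_k$ act nontrivially on $K$ and the involution relators acquire the nontrivial value $A_{k,k+1}$; the key verification is that~\reqref{action1} and~\reqref{action2} indeed describe a well-defined $\sn$-action and hence suffice as the complete list of type~(3) relations, a consistency already secured when those formulae were deduced from~\reqref{conjugAij1}.
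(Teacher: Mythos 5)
Your proof is correct and takes exactly the same route as the paper, whose entire argument is a one-line invocation of \cite[Proposition~1, p.~139]{Johnson} applied first to the central extension~\reqref{gamma23P} with $k=2$ and then to~\reqref{sespnquot} with $k=3$ using part~(\ref{it:relnsalphaa}). Your more careful unwinding of the method even surfaces the lifted involution relations $\sigma_{k}^{2}=A_{k,k+1}$, which the paper's stated relation list leaves implicit.
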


\begin{proof}
To prove parts~(\ref{it:relnsalphaa}) and~(\ref{it:relnsalphab}), it suffices to apply~\cite[Proposition~1, p.~139]{Johnson} first to~\reqref{gamma23P} taking $k=2$, and then to~\reqref{sespnquot} taking $k=3$ using also part~(\ref{it:relnsalphaa}).
\end{proof}

\subsection{Torsion elements and conjugacy classes in $B_n/\Gamma_3(P_n)$}\label{sec:torsion2}



In this section, we prove Theorems~\ref{th:torsion} and~\ref{th:conjugacy}. We start by proving the following lemma whose result generalises~\cite[Theorem~3(a)]{GGO}.

\begin{lem}\label{lem:1to1}
Let $m,k \in \N$. 
\begin{enumerate}
\item\label{it:1to1a} Let $n\in \N$ be such that $2\leq n\leq m$, and let $\map{\iota}{B_{n}}[B_m]$ denote the 
injective homomorphism defined by $\iota(\sigma_i)=  \sigma_i$ for all $1\leq i\leq n-1$.
Then the induced homomorphism $\map{\overline{\iota}}{B_n/\Gamma_k(P_n)}[B_m/\Gamma_k(P_m)]$ is injective. 
\item\label{it:1to1b} Let $t\in \N$, let $n_1,n_2,\ldots,n_t$ be integers greater than or equal to $2$ for which $\sum_{i=1}^{t} \, n_i\leq m$, and let $\map{\zeta}{B_{n_1} \times \cdots \times B_{n_t}}[B_m]$ denote the natural inclusion. Then the induced homomorphism $\map{\overline{\zeta}}{B_{n_1}/\Gamma_k(P_{n_1}) \times \cdots \times B_{n_t}/\Gamma_k(P_{n_t})}[B_m/\Gamma_k(P_m)]$ is injective.
\end{enumerate}
\end{lem}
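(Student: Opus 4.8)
The plan is to prove part~(\ref{it:1to1a}) first, since part~(\ref{it:1to1b}) follows from essentially the same argument applied to a product of braid groups. The key observation is that the inclusion $\iota\colon B_n\to B_m$ restricts to an inclusion $P_n\hookrightarrow P_m$, and crucially this restriction respects the pure braid generators: we have $\iota(A_{i,j})=A_{i,j}$ for all $1\leq i<j\leq n$, as is immediate from the definition~\reqref{defaij} of the $A_{i,j}$ in terms of the $\sigma_k$. To show that $\overline{\iota}$ is well defined and injective, the heart of the matter is to understand how $\iota$ interacts with the lower central series filtration $\Gamma_k(P_n)$, that is, to establish that
\begin{equation*}
\iota^{-1}(\Gamma_k(P_m)) = \Gamma_k(P_n).
\end{equation*}
The inclusion $\iota(\Gamma_k(P_n))\subseteq \Gamma_k(P_m)$ is automatic (any homomorphism carries $\Gamma_k$ into $\Gamma_k$), which already guarantees that $\overline{\iota}$ is well defined; the content is the reverse inclusion at the level of preimages, which is what yields injectivity.

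First I would set up the argument by recalling that $P_m$ admits a decomposition coming from the Fadell--Neuwirth fibrations: there is a splitting $P_m\cong F_{m-1}\rtimes F_{m-2}\rtimes\cdots\rtimes F_{n}\rtimes P_n$ (an iterated semidirect product of free groups), where the bottom factor $P_n$ is precisely $\iota(P_n)$, and this decomposition is compatible with the lower central series in the appropriate graded sense. The cleanest route is to use the fact, due to Falk--Randell, that the consecutive quotients $L_q(P_m)=\Gamma_q(P_m)/\Gamma_{q+1}(P_m)$ split as a direct sum indexed by the free factors, and that the inclusion $\iota$ maps the associated graded of $P_n$ isomorphically onto the corresponding graded summand of $P_m$. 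From this one extracts that an element of $P_n$ lies in $\Gamma_k(P_m)$ only if its image in each graded piece $L_q(P_n)$ for $q<k$ vanishes, i.e.\ only if it already lies in $\Gamma_k(P_n)$. This gives $\iota^{-1}(\Gamma_k(P_m))=\Gamma_k(P_n)$ and hence that $\overline{\iota}$ restricted to $P_n/\Gamma_k(P_n)$ is injective.

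To upgrade from injectivity on the pure braid quotient to injectivity on the full quotient $B_n/\Gamma_k(P_n)$, I would run a diagram chase on the commutative ladder of short exact sequences~\reqref{sespnquot} for $n$ and for $m$, linked by $\overline{\iota}$ and by the inclusion $\sn\hookrightarrow\sn[m]$ on the holonomy quotients. The rightmost vertical map $\sn\hookrightarrow\sn[m]$ is injective, and by the previous paragraph the leftmost vertical map $P_n/\Gamma_k(P_n)\to P_m/\Gamma_k(P_m)$ is injective; the five lemma (in its injective form) then forces $\overline{\iota}$ to be injective. Finally, for part~(\ref{it:1to1b}) I would note that the natural inclusion $\zeta$ lands in the ``block'' subgroup of $B_m$ generated by disjoint strand-ranges, that the pure braid groups of the factors sit as direct factors of a retract of $P_m$ inside the relevant range, and that the same graded argument shows $\zeta^{-1}(\Gamma_k(P_m))$ equals the product of the $\Gamma_k(P_{n_i})$; the analogous five-lemma chase then completes the proof. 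The main obstacle I anticipate is the identity $\iota^{-1}(\Gamma_k(P_m))=\Gamma_k(P_n)$: proving the nontrivial containment requires genuine input from the structure of the lower central series of $P_m$ (the Falk--Randell splitting or the rank computations of \reqref{ranklvw}), rather than formal nonsense, and care is needed to ensure the splitting is compatible with $\iota$ on the nose.
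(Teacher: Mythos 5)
Your proposal is correct, and its key ingredient is the same as the paper's: the Falk--Randell theorem on lower central series of almost-direct products, applied to the (iterated) Fadell--Neuwirth splittings, to show that the map induced by $\iota$ on each graded piece $L_q(P_n)\to L_q(P_m)$ is a split injection. Where you differ from the paper is in how this is assembled into injectivity of $\overline{\iota}$ on $B_n/\Gamma_k(P_n)$. You first deduce $\iota^{-1}(\Gamma_k(P_m))=\Gamma_k(P_n)$ by a filtration argument through the graded pieces, i.e.\ injectivity of $P_n/\Gamma_k(P_n)\to P_m/\Gamma_k(P_m)$, and then apply the four-lemma to the ladder of sequences \reqref{sespnquot} using the injection $\sn\lhra\sn[m]$ on holonomy. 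The paper instead inducts on $k$ along the central extensions \reqref{gammaquot}, with left-hand map $\Gamma_k(P_n)/\Gamma_{k+1}(P_n)\to\Gamma_k(P_m)/\Gamma_{k+1}(P_m)$ injective by the Falk--Randell claim and right-hand map injective by the inductive hypothesis, anchored at the base case $k=2$ from~\cite[Theorem~3(a)]{GGO}. Your route is slightly more self-contained (no appeal to the $k=2$ case of~\cite{GGO} and no induction at the $B$-level), at the cost of making the filtration argument for $\iota^{-1}(\Gamma_k(P_m))=\Gamma_k(P_n)$ explicit; the two are essentially equivalent in substance. For part~(\ref{it:1to1b}) your sketch is thinner than the paper's: the paper constructs explicit strand-forgetting retractions $\psi_i$ onto each block, so that $\overline{\psi_i}\circ\overline{\phi_i}=\id$ exhibits the product map as split injective, whereas your ``same graded argument'' would additionally require checking that the graded images of the distinct blocks are independent in $L_q(P_m)$. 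Your phrase about the factors sitting inside a retract of $P_m$ points in the right direction, but if you pursue the graded route you should verify that independence; the retraction argument avoids it entirely and is the cleaner way to finish part~(\ref{it:1to1b}).
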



\begin{proof}
Let $t$, $m$ and $n$ be positive integers such that $2\leq n\leq m$.
\begin{enumerate}
\item 
 If $k=1$, the result is straightforward. So assume that $k\geq 2$. For $2\leq n\leq m$, let $\map{\iota_{n,m}}{B_{n}}[B_m]$ (resp.\ $\map{\overline{\iota}_{n,m,k}}{B_n/\Gamma_k(P_n)}[B_m/\Gamma_k(P_m)]$) be the homomorphism $\iota$ (resp.\ $\overline{\iota}$) given in the statement. By \req{defaij}, $\iota_{n,m}$ restricts to an injective homomorphism $\map{\iota_{n,m}\left\lvert_{P_{n}}\right.}{P_{n}}[P_{m}]$ given by $\iota_{n,m}\left\lvert_{P_{n}}\right.(A_{i,j})=A_{i,j}$ for all $1\leq i<j\leq n$. For $n\leq q\leq m-1$, let $\map{\iota_{q}}{P_q}[P_{q+1}]$ denote the homomorphism $\iota_{q,q+1}\left\lvert_{P_{q}}\right.$. We claim that the homomorphism $\map{\overline{\iota}_{n,m}\left\lvert_{\Gamma_k(P_n)/\Gamma_{k+1}(P_n)}\right.}{\Gamma_k(P_n)/\Gamma_{k+1}(P_n)}[\Gamma_k(P_m)/\Gamma_{k+1}(P_m)]$ that is induced by $\iota_{n,m}\left\lvert_{P_{n}}\right.$ is injective for all $k\geq 2$. To see this, first note that since $\iota_{n,m}\left\lvert_{P_{n}}\right.$ is equal to the composition $\iota_{m-1}\circ \cdots\circ \iota_{n}$, the homomorphism $\overline{\iota}_{n,m}\left\lvert_{\Gamma_k(P_n)/\Gamma_{k+1}(P_n)}\right.$ is equal to the composition $\overline{\iota}_{m-1}\left\lvert_{\Gamma_k(P_{m-1})/\Gamma_{k+1}(P_{m-1})}\right.\circ \cdots\circ \overline{\iota}_{n}\left\lvert_{\Gamma_k(P_n)/\Gamma_{k+1}(P_n)}\right.$, where the homomorphism
\begin{equation*}
 \map{\overline{\iota}_{q}\left\lvert_{\Gamma_k(P_q)/\Gamma_{k+1}(P_q)}\right.}{\Gamma_k(P_q)/\Gamma_{k+1}(P_q)}[\Gamma_k(P_{q+1})/\Gamma_{k+1}(P_{q+1})]
\end{equation*}
is induced by $\iota_q$ for all $n\leq q \leq m-1$, it suffices to prove that $\overline{\iota}_{q}\left\lvert_{\Gamma_k(P_q)/\Gamma_{k+1}(P_q)}\right.$ is injective. To do so, consider the Fadell-Neuwirth short exact sequence $1\to \ker{p} \to P_{q+1} \stackrel{p}{\to} P_q \to 1$, where $p$ is the homomorphism given geometrically by forgetting the last string. Using the presentation given in \req{purerelations}, we obtain two well-known facts, first that $\iota_q$ is a section for $p$, and secondly that the resulting semi-direct product is almost direct, \emph{i.e.}\ the action induced by $P_q$ on the Abelianisation $\ker{p}/\Gamma_2(\ker{p})$ of $\ker{p}$ is trivial. It follows from~\cite[Theorem~3.1]{FRinv} that the induced sequence of homomorphisms $1\to \Gamma_k(\ker{p})/\Gamma_{k+1}(\ker{p}) \to \Gamma_k(P_{q+1})/\Gamma_{k+1}(P_{q+1}) \stackrel{\overline{p}}{\to} \Gamma_k(P_q)/\Gamma_{k+1}(P_q) \to 1$ is split short exact, and that the homomorphism $\overline{\iota}_{q}\left\lvert_{\Gamma_k(P_q)/\Gamma_{k+1}(P_q)}\right.$ is a section for $\overline{p}$. In particular, $\overline{\iota}_{q}\left\lvert_{\Gamma_k(P_q)/\Gamma_{k+1}(P_q)}\right.$ is injective, and the claim follows.
Now consider the following commutative diagram of short exact sequences:
\begin{equation*}
\begin{xy}*!C\xybox{%
\xymatrix{%
1 \ar[r] & \Gamma_k(P_n)/\Gamma_{k+1}(P_n) \ar[r] \ar[d]_{\overline{\iota}_{n,m}\left\lvert_{\Gamma_k(P_n)/\Gamma_{k+1}(P_n)}\right.} & B_n/\Gamma_{k+1}(P_n) \ar[r] \ar[d]^{\overline{\iota}_{n,m,k+1}} &  B_n/\Gamma_{k}(P_n) \ar[r] \ar[d]^{\overline{\iota}_{n,m,k}} & 1\\
1 \ar[r] & \Gamma_k(P_m)/\Gamma_{k+1}(P_m) \ar[r] & B_m/\Gamma_{k+1}(P_m) \ar[r] &  B_m/\Gamma_{k}(P_m) \ar[r] & 1,}}
\end{xy}
\end{equation*}
where the rows are the short exact sequences given by the central extension~\reqref{gammaquot}. The statement of part~(\ref{it:1to1a}) is then a consequence of applying the $5$-Lemma to this diagram, the above claim, induction on $k$, and the fact that $\map{\overline{\iota}}{B_n/\Gamma_2(P_n)}[B_m/\Gamma_2(P_m)]$ is an injective homomorphism by~\cite[Theorem~3(a)]{GGO}.

\item Let $n_{1},\ldots,n_{t}$ be integers greater than or equal to $2$ such that $\sum_{i=1}^{t}\, n_{i}\leq m$, let $\map{\sigma}{B_{\sum_{i=1}^{t}\, n_{i}}}[{\sn[\sum_{i=1}^{t}\, n_{i}]}]$ denote the usual homomorphism that to a braid associates its permutation, and let $B_{n_{1},\ldots,n_{t}}$ denote the corresponding mixed braid group, namely the preimage under $\sigma$ of the subgroup $\sn[n_{1}] \times \cdots \times \sn[n_{t}]$ of $\sn[\sum_{i=1}^{t}\, n_{i}]$.  For $1\leq i\leq t$, let $\map{\phi_{i}}{B_{n_{i}}}[B_{n_{1},\ldots,n_{t}}]$ denote the embedding of $B_{n_{i}}$ into the $i\up{th}$ factor of $B_{n_{1},\ldots,n_{t}}$. Note that $\zeta$ is the equal to the following composition:
\begin{equation*}
B_{n_1} \times \cdots \times B_{n_t} \!\xrightarrow{\phi_{1}\! \times\cdots\times \phi_{t}} B_{n_1,n_2,\ldots,n_t} \!\lhra \!B_{\sum_{i=1}^{t}n_i} \!\lhra\! B_{m}.
\end{equation*}
Note that $\phi_{i}$ induces a homomorphism $\map{\overline{\phi_{i}}}{B_{n_i}/\Gamma_k(P_{n_i})}[B_{n_1,n_2,\ldots,n_t}/\Gamma_k\bigl(P_{\sum_{i=1}^{t}n_i}\bigr)]$ because $\phi_{i}(\Gamma_k(P_{n_{i}}))\subset \Gamma_k\bigl(P_{\sum_{i=1}^{t}n_i}\bigr)$. Now let $\map{\psi_{i}}{B_{n_{1},\ldots,n_{t}}}[B_{n_i}/\Gamma_k(P_{n_i})]$ be the composition of the projection onto the $i\up{th}$ factor of $B_{n_{1},\ldots,n_{t}}$, followed by the canonical projection $B_{n_i}\to B_{n_i}/\Gamma_k(P_{n_i})$. Under this composition, the normal subgroup $P_{\sum_{i=1}^{t}n_i}$ of $B_{n_{1},\ldots,n_{t}}$ is sent to $P_{n_i}/\Gamma_k(P_{n_i})$, hence the normal subgroup $\Gamma_k\bigl(P_{\sum_{i=1}^{t}n_i}\bigr)$ of $B_{n_{1},\ldots,n_{t}}$ is sent to the trivial element of $B_{n_i}/\Gamma_k(P_{n_i})$, from which it follows that $\psi_{i}$ induces a homomorphism $\map{\overline{\psi_{i}}}{B_{n_{1},\ldots,n_{t}}/\Gamma_k\bigl(P_{\sum_{i=1}^{t}n_i}\bigr)}[B_{n_i}/\Gamma_k(P_{n_i})]$. From the constructions of $\phi_{i}$ and $\psi_{i}$, we see that $\overline{\psi_{i}}\circ \overline{\phi_{i}}=\id_{B_{n_i}/\Gamma_k(P_{n_i})}$ for all $1\leq i\leq t$, and so the composition
\begin{equation*}
\frac{B_{n_1}}{\Gamma_k(P_{n_1})}  \times \cdots \times \frac{B_{n_t}}{\Gamma_k(P_{n_t})} \!\xrightarrow{\overline{\phi_{1}} \times\cdots\times \overline{\phi_{t}}}
\frac{B_{n_1,n_2,\ldots,n_t}}{\Gamma_k\bigl(P_{\sum_{i=1}^{t}n_i}\bigr)} \xrightarrow{\overline{\psi_{1}}  \times\cdots\times \overline{\psi_{t}}} \frac{B_{n_1}}{\Gamma_k(P_{n_1})}  \times \cdots \times \frac{B_{n_t}}{\Gamma_k(P_{n_t})}
\end{equation*}
is the identity. Thus $\overline{\phi_{1}}\times\cdots\times\overline{\phi_{t}}$ is injective, and the composition
\begin{equation*}
\frac{B_{n_1}}{\Gamma_k(P_{n_1})} \times \cdots \times \frac{B_{n_t}}{\Gamma_k(P_{n_t})} \!\xrightarrow{\overline{\phi_{1}} \times\cdots\times \overline{\phi_{t}}} \frac{B_{n_1,n_2,\ldots,n_t}}{\Gamma_k\bigl(P_{\sum_{i=1}^{t}n_i}\bigr)} \to \frac{B_{\sum_{i=1}^{t}n_i}}{\Gamma_k\bigl(P_{\sum_{i=1}^{t}n_i}\bigr)} \to \frac{B_{m}}{\Gamma_k(P_{m})},
\end{equation*}
which is the homomorphism $\overline{\zeta}$ of the statement, may be seen to be injective using part~(\ref{it:1to1a}) and the injectivity of the homomorphism $B_{n_{1},\ldots,n_{t}}\lhra B_{\sum_{i=1}^{t}\, n_{i}}$. \qedhere
\end{enumerate}
\end{proof}

The following results allow us to decide whether certain elements of a group are of finite order or not. They will be used to obtain finite-order elements in $B_{n}/\Gamma_{3}(P_{n})$, as well as in the proof of \reth{torsion}.


\begin{lem}\label{lem:nodtorsion}
Let $d\geq 2$, let $G$ be a group that has no $d$-torsion, and let $\alpha\in G$. Suppose that $\map{\sigma}{G}[\sn]$ is a surjective homomorphism whose kernel $K$ is torsion free, and such that $d$ divides the order of $\sigma(\alpha)$. Then for all $\theta\in K$, $\theta \alpha$ is of infinite order in $G$.
\end{lem}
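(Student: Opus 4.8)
<br>

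The plan is to analyze the order of $\theta\alpha$ by projecting to $\sn$ and then using the torsion-freeness of $K$ to control what happens inside the kernel. Let me set up the key observation first: since $\map{\sigma}{G}[\sn]$ is a homomorphism, we have $\sigma(\theta\alpha)=\sigma(\theta)\sigma(\alpha)=\sigma(\alpha)$ because $\theta\in K=\ker{\sigma}$. Hence $\sigma(\theta\alpha)=\sigma(\alpha)$, and in particular $d$ divides the order of $\sigma(\theta\alpha)$ by hypothesis.

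First I would argue by contradiction: suppose that $\theta\alpha$ has finite order $N$ in $G$. Since $\sigma(\theta\alpha)=\sigma(\alpha)$ has order divisible by $d$, and since the order of $\sigma(\theta\alpha)$ must divide the order $N$ of $\theta\alpha$ (as $\sigma$ is a homomorphism), it follows that $d\mid N$. Write $N=d\cdot s$ for some $s\in\N$, and consider the element $\beta=(\theta\alpha)^{s}$. Then $\beta$ has order exactly $d$ in $G$. But this directly contradicts the hypothesis that $G$ has no $d$-torsion.

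The above already yields the contradiction, so in fact the role of the torsion-freeness of $K$ needs to be examined: the hypothesis on $K$ is what guarantees (via results such as \relem{eleconj}-type arguments) that the order of $\theta\alpha$, when finite, maps isomorphically onto the order of its image $\sigma(\theta\alpha)$ in $\sn$. More precisely, if $\theta\alpha$ generates a finite cyclic subgroup $\ang{\theta\alpha}$ of $G$, then the restriction of $\sigma$ to $\ang{\theta\alpha}$ has kernel $\ang{\theta\alpha}\cap K$, which is a finite subgroup of the torsion-free group $K$, hence trivial. Therefore $\sigma$ restricted to $\ang{\theta\alpha}$ is injective, and the order of $\theta\alpha$ equals the order of $\sigma(\theta\alpha)=\sigma(\alpha)$, which is divisible by $d$. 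Combining this with the no-$d$-torsion hypothesis on $G$ produces the contradiction as above, forcing $\theta\alpha$ to have infinite order.

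The main obstacle, if any, is purely bookkeeping: one must be careful to distinguish between the claim that $\theta\alpha$ has finite order and the claim that $d$ divides that order. The torsion-freeness of $K$ is precisely the ingredient that forces the order of a finite-order element to be detected faithfully by $\sigma$, so that we cannot have a finite-order element of order coprime to $d$ mapping to an element of order divisible by $d$. Once this is in place, the no-$d$-torsion hypothesis on $G$ eliminates the possibility of finite order altogether. I expect the whole argument to be short, with the only subtlety being the clean invocation of torsion-freeness of $K$ to equate the orders.
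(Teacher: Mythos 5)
Your proof is correct and follows essentially the same route as the paper: argue by contradiction, observe that $d$ divides the finite order $N$ of $\theta\alpha$ because $\sigma(\theta\alpha)=\sigma(\alpha)$, and then exhibit a power of $\theta\alpha$ of order exactly $d$, contradicting the absence of $d$-torsion in $G$. Your remark that the torsion-freeness of $K$ is not actually needed is accurate --- the paper uses it only to pin down the order of $\theta\alpha$ as exactly the order of $\sigma(\alpha)$ before deriving the contradiction, a step that your argument correctly bypasses since $(\theta\alpha)^{N/d}$ already has order exactly $d$ by elementary cyclic-group considerations.
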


\begin{proof}
Suppose on the contrary that there exists $\theta\in K$ for which $\theta\alpha$ is of finite order, $m$ say, in $G$. Let $q$ denote the order of $\sigma(\alpha)$. Since $\theta\in K$, $\id= \sigma((\theta\alpha)^{m})= (\sigma(\alpha))^{m}$, and thus $q\mid m$. In a similar manner, we see that $\sigma((\theta\alpha)^{q})=\id$, so $(\theta\alpha)^{q}\in K$. Now $1=(\theta\alpha)^{m}=((\theta\alpha)^{q})^{m/q}$, and the fact that $K$ is torsion free implies that $(\theta\alpha)^{q}=1$, hence $q=m$. By hypothesis, $d\mid q$, and so $((\theta\alpha)^{q/d})^{d}=1$. This implies that the order of $(\theta\alpha)^{q/d}$ divides $d$. On the other hand, the fact that $\sigma(\theta\alpha)$ is of order $q$ implies that $\sigma((\theta\alpha)^{q/d})$ is of order $d$, and so the order of $(\theta\alpha)^{q/d}$ cannot be strictly less than $d$. Hence $(\theta\alpha)^{q/d}$ is of order $d$, but this contradicts the fact that $G$ has no $d$-torsion.
\end{proof}


\begin{prop} \label{prop:deltan}  Let
$n\geq 3$.
\begin{enumerate}[(a)]
\item\label{it:deltana} If $n$ is even then for all $k\in \N$ and $\theta\in P_{n}/\Gamma_3(P_n)$, $(\theta\delta_{n})^{k} \notin \Gamma_2(P_n)/\Gamma_3(P_n)$.
\item\label{it:deltanb} Suppose that $n$ is odd. Then the element $\delta_n^n$ belongs to $\Gamma_2(P_n)/\Gamma_3(P_n)$, and with the notation of~\reqref{defbij}, we have: 
\begin{equation}\label{eq:deltann}
\delta_n^n=\prod_{i=1}^{\mathcal{O}(n)} \prod_{j=1}^{q(i)} b_{i,j}^{m_{i,j}},
\end{equation}
where $m_{i,1}=m_{i,2}=\cdots=m_{i,q(i)}$ for all $1\leq i\leq \mathcal{O}(n)$. Further, if $3 \mid n$ then for all $k\in \N$ and $\theta\in P_{n}/\Gamma_3(P_n)$, $(\theta\delta_{n})^{k}$ is non trivial in $B_{n}/\Gamma_3(P_n)$.
\end{enumerate}
\end{prop}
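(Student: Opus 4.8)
The plan is to handle the two parts separately, reducing each of the non-triviality statements to an application of \relem{nodtorsion}, and deriving the decomposition~\reqref{deltann} from the fact that $\delta_n^n$ is invariant under conjugation by $\delta_n$.

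For part~(\ref{it:deltana}), I would work with the canonical projection $\map{p}{B_n/\Gamma_3(P_n)}[B_n/\Gamma_2(P_n)]$, whose kernel is exactly $\Gamma_2(P_n)/\Gamma_3(P_n)$ by~\reqref{gamma23P}. Thus $(\theta\delta_n)^k\notin \Gamma_2(P_n)/\Gamma_3(P_n)$ is equivalent to $(p(\theta)\, p(\delta_n))^k\neq 1$ in $B_n/\Gamma_2(P_n)$. Since $\theta$ lies in the kernel of $\overline\sigma$, the permutation of $p(\theta)p(\delta_n)$ equals $\overline\sigma(\delta_n)$, which is an $n$-cycle and hence of even order $n$. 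I would then apply \relem{nodtorsion} with $G=B_n/\Gamma_2(P_n)$, $d=2$ and $\alpha=p(\delta_n)$; its hypotheses hold because $B_n/\Gamma_2(P_n)$ has no $2$-torsion by~\cite[Theorem~2]{GGO}, and the kernel $P_n/\Gamma_2(P_n)\cong \Z^{n(n-1)/2}$ of $\overline\sigma$ is torsion free. The lemma then gives that $p(\theta)p(\delta_n)$ has infinite order, so all its powers are non-trivial, which is the desired conclusion.

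For part~(\ref{it:deltanb}), the membership $\delta_n^n\in \Gamma_2(P_n)/\Gamma_3(P_n)$ is immediate from \relem{lem28}: there $\delta_n$ has order $n$ in $B_n/\Gamma_2(P_n)$, so $p(\delta_n)^n=1$ and therefore $\delta_n^n\in \ker p$. To obtain~\reqref{deltann}, I would write $\delta_n^n=\prod_{i,j} b_{i,j}^{m_{i,j}}$, which is possible and unique because $\mathcal{B}=\setl{b_{i,j}}{1\leq i\leq \mathcal{O}(n),\ 1\leq j\leq q(i)}$ is a basis of the free Abelian group $\Gamma_2(P_n)/\Gamma_3(P_n)$. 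The crucial point is that $\delta_n^n$, being a power of $\delta_n$, is fixed by conjugation by $\delta_n$; moreover, by~\reqref{defbij} and \repr{basis}, conjugation by $\delta_n$ permutes $\mathcal{B}$, sending $b_{i,j}$ to $b_{i,j+1}$ cyclically within each orbit (indices modulo $q(i)$). Applying this automorphism to the expression for $\delta_n^n$ and using that $\Gamma_2(P_n)/\Gamma_3(P_n)$ is Abelian yields $\delta_n^n=\prod_{i,j} b_{i,j+1}^{m_{i,j}}$; comparing exponents with the original expression, by uniqueness of coordinates in the basis $\mathcal{B}$, forces $m_{i,j}=m_{i,j-1}$ for all $i,j$, that is, $m_{i,1}=\cdots=m_{i,q(i)}$ for each $i$.

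Finally, for the last assertion of part~(\ref{it:deltanb}), assume $3\mid n$. I would invoke \relem{nodtorsion} once more, now with $G=B_n/\Gamma_3(P_n)$, $d=3$ and $\alpha=\delta_n$: the group $B_n/\Gamma_3(P_n)$ has no $3$-torsion by \reth{almostb}, the kernel $P_n/\Gamma_3(P_n)$ of $\overline\sigma$ is torsion free by \relem{ptorsion}(\ref{it:ptorsiona}), and $3$ divides the order $n$ of the $n$-cycle $\overline\sigma(\delta_n)$. The lemma then shows that $\theta\delta_n$ has infinite order for every $\theta\in P_n/\Gamma_3(P_n)$, so $(\theta\delta_n)^k\neq 1$ for all $k$. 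I expect the only non-formal step to be the orbit-constancy of the exponents in part~(\ref{it:deltanb}); the remaining statements follow mechanically from \relem{nodtorsion} together with the torsion-freeness and torsion results already established.
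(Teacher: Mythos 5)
Your proposal is correct and follows essentially the same route as the paper's proof: both parts (a) and the final claim of (b) are reduced to \relem{nodtorsion} via the projection onto $B_n/\Gamma_2(P_n)$ (resp.\ applied directly in $B_n/\Gamma_3(P_n)$ using \reth{almostb} and \relem{ptorsion}), and the constancy of the exponents along each orbit is obtained, exactly as in the paper, by conjugating \reqref{deltann} by $\delta_n$ and comparing coordinates in the basis $\mathcal{B}$ using \reqref{defbij}. The only differences are cosmetic: you argue part (a) directly rather than by contradiction, and you spell out the orbit-permutation step in slightly more detail.
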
 
 
\begin{proof}\mbox{} 
\begin{enumerate}[(a)]
\item Let $n$ be even, and assume on the contrary that $(\theta\delta_{n})^{k} \in \Gamma_2(P_n)/\Gamma_3(P_n)$ for some $k\in \N$ and $\theta\in P_{n}/\Gamma_3(P_n)$. Let $\map{f}{B_{n}/\Gamma_3(P_n)}[B_{n}/\Gamma_2(P_n)]$ be the homomorphism given in~\reqref{gammaquot} with $j=3$. Then $(\overline{\theta} \overline{\delta}_{n})^{k}=1$ in $B_{n}/\Gamma_2(P_n)$, where $\overline{\theta}=f(\theta)$ and $\overline{\delta}_{n}=f(\delta_{n})$. We now apply \relem{nodtorsion} to the short exact sequence~\reqref{sespnquot}, taking $k=2$, $G=B_{n}/\Gamma_2(P_n)$, $\alpha=\overline{\delta}_{n}$, and the homomorphism $\sigma$ of that lemma to be $\overline{\sigma}$. Note that $\overline{\sigma}(\overline{\delta}_{n})$ is of order $n$, so $2\mid n$, and $B_{n}/\Gamma_{2}(P_{n})$ has no $2$-torsion by~\cite[Theorem~2]{GGO}. Since $P_{n}/\Gamma_{2}(P_{n})$ is torsion free, it follows from \relem{nodtorsion} that $\overline{\theta} \overline{\delta}_{n}$ is of infinite order in $B_{n}/\Gamma_{2}(P_{n})$, and we obtain a contradiction. We conclude that $(\theta\delta_{n})^{k} \notin \Gamma_2(P_n)/\Gamma_3(P_n)$.


\item Since $\overline{\delta}_n^n=1$ in $B_n/\Gamma_2(P_n)$ by \relem{lem28}, it follows from the short exact sequence~\reqref{gammaquot} with $j=3$ that $\delta_n^n\in \Gamma_2(P_n)/\Gamma_3(P_n)$. 
Hence in terms of the basis $\mathcal{B}$ of $\Gamma_2(P_n)/\Gamma_3(P_n)$ given by~\reqref{basis1}, and with the notation of~\reqref{defbij}, for all $1\leq i\leq \mathcal{O}(n)$ and $1\leq j\leq q(i)$, there exist $m_{i,j}\in \Z$ that are unique and for which~\reqref{deltann} holds. The equality $m_{i,1}=m_{i,2}=\cdots=m_{i,q(i)}$ follows by conjugating~\reqref{deltann} by $\delta_{n}$ and using~\reqref{defbij}. The last part of the statement follows by applying \relem{nodtorsion} to the short exact sequence~\reqref{sespnquot}, where we take $G=B_{n}/\Gamma_3(P_n)$, $d=3$ and $\alpha=\delta_{n}$, and using \reth{almostb}.\qedhere
\end{enumerate}
%
%
\end{proof}


\begin{thm}\label{th:ntorsion} 
Let $n\in \N$, let $m\leq n$, and let $s$ be the largest divisor of $m$ for which $\operatorname{\text{gcd}}(s,6)=1$.
If $s>1$, the group $B_n/\Gamma_3(P_n)$ possesses
infinitely-many elements of order $s$.
\end{thm}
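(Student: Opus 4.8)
The plan is to reduce the problem to the braid group $B_{s}$ and then to exploit the cyclic symmetry of the basis $\mathcal{B}$ established in \repr{basis}. Since $s\mid m$ and $m\leq n$, we have $2\leq s\leq n$, so by \relem{1to1}(\ref{it:1to1a}) the induced homomorphism $\map{\overline{\iota}}{B_{s}/\Gamma_3(P_s)}[B_n/\Gamma_3(P_n)]$ is injective; as an injection preserves orders and sends distinct elements to distinct elements, it suffices to produce infinitely many elements of order $s$ in $B_{s}/\Gamma_3(P_s)$. Note that $\gcd(s,6)=1$ forces $s$ to be odd, coprime to $3$, and $s\geq 5$. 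Write $V=\Gamma_2(P_s)/\Gamma_3(P_s)$, which by~\reqref{gammaquot} (with $j=3$) is a central, free Abelian subgroup of $B_{s}/\Gamma_3(P_s)$, and consider $\delta_{s}=\delta_{0,s}$ of~\req{eq14}, regarded in $B_{s}/\Gamma_3(P_s)$.

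The key step is to describe $V$ as a module over $\delta_{s}$. Because $\gcd(s,3)=1$, \repr{basis}(\ref{it:basisa}) tells us that conjugation by $\delta_{s}$ partitions $\mathcal{B}$ into $\mathcal{O}(s)=\frac{(s-1)(s-2)}{6}$ orbits, each of length exactly $s$, and by~\reqref{conjdelta} this action merely permutes the basis elements cyclically within each orbit, with no signs. With the notation of~\reqref{defbij}, define the norm endomorphism $\map{\nu}{V}[V]$ by $\nu(v)=\prod_{j=0}^{s-1}\delta_{s}^{\,j} v\, \delta_{s}^{-j}$; it is a well-defined homomorphism since $V$ is central and Abelian. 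For each orbit $i$, the orbit product $c_{i}=\prod_{j=1}^{s}b_{i,j}$ is $\delta_{s}$-invariant and equals $\nu(b_{i,1})$. Since $s$ is odd, \repr{deltan}(\ref{it:deltanb}) gives $\delta_{s}^{\,s}\in V$ together with the equality $m_{i,1}=\cdots=m_{i,s}=:m_{i}$ on each (length-$s$) orbit, whence $\delta_{s}^{\,s}=\prod_{i=1}^{\mathcal{O}(s)}c_{i}^{\,m_{i}}=\nu(\phi)$, where $\phi=\prod_{i=1}^{\mathcal{O}(s)}b_{i,1}^{\,m_{i}}$. In other words, $\delta_{s}^{\,s}$ lies in the image of $\nu$.

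It then remains to assemble an order-$s$ element and to vary it. A direct induction shows that $(\theta\delta_{s})^{s}=\nu(\theta)\,\delta_{s}^{\,s}$ for any $\theta\in V$; taking $\theta=\phi^{-1}$ yields $(\phi^{-1}\delta_{s})^{s}=\nu(\phi)^{-1}\delta_{s}^{\,s}=1$. Since $\overline{\sigma}(\phi^{-1}\delta_{s})=\overline{\sigma}(\delta_{s})$ is an $s$-cycle of order $s$, the order of $\phi^{-1}\delta_{s}$ is a multiple of $s$ that divides $s$, hence equals $s$. To obtain infinitely many, observe that for every $\kappa\in\ker\nu$ we still have $(\phi^{-1}\kappa\delta_{s})^{s}=\nu(\phi^{-1}\kappa)\,\delta_{s}^{\,s}=\nu(\phi)^{-1}\delta_{s}^{\,s}=1$, so $\phi^{-1}\kappa\delta_{s}$ also has order $s$. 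The elements $\kappa_{l}=b_{1,1}^{\,l}b_{1,2}^{-l}$ $(l\in\Z)$ satisfy $\nu(\kappa_{l})=c_{1}^{\,l}c_{1}^{-l}=1$ and are pairwise distinct in the free Abelian group $V$; hence the elements $\phi^{-1}\kappa_{l}\delta_{s}$ are pairwise distinct (right multiplication by $\delta_{s}^{-1}$ being injective) and all of order $s$. Applying $\overline{\iota}$ finally gives infinitely many elements of order $s$ in $B_n/\Gamma_3(P_n)$.

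The principal obstacle is the second paragraph: everything hinges on recognising that the orbit lengths are all equal to $s$ (which is exactly where $\gcd(s,3)=1$ is used) and that $\delta_{s}^{\,s}$ is therefore a product of the norms $c_{i}=\nu(b_{i,1})$, which is what guarantees that the central equation $\nu(\theta)=\delta_{s}^{-s}$ is solvable. Equivalently, this says that $V$ is a free $\Z[\Z_s]$-module, so that the restriction of the central extension~\reqref{gammaquot} to $\ang{\overline{\sigma}(\delta_{s})}\cong\Z_s$ splits; the splitting produces the order-$s$ lift, and the whole $\ker\nu$-coset of solutions yields its infinite family of companions.
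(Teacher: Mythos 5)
Your proof is correct and follows essentially the same route as the paper: reduce to $B_s/\Gamma_3(P_s)$ via \relem{1to1}(\ref{it:1to1a}), use \repr{basis}(\ref{it:basisa}) and \repr{deltan}(\ref{it:deltanb}) to write $(\theta\delta_s)^s$ as an orbit-wise product, and observe that the resulting linear condition on the exponents of $\theta$ has infinitely many solutions --- your ``norm map'' $\nu$ is just a repackaging of the paper's equation \reqref{gendeltann} and compatibility condition \reqref{compat}. (One small slip: $V=\Gamma_2(P_s)/\Gamma_3(P_s)$ is normal but not central in $B_s/\Gamma_3(P_s)$, since $\delta_s$ permutes the basis $\mathcal{B}$ nontrivially; your argument never actually uses centrality, only that $V$ is Abelian and normal, so nothing breaks.)
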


\begin{proof}
Let $1\leq m\leq n$, and let $s$ be the largest divisor of $m$ for which $\operatorname{\text{gcd}}(s,6)=1$. Since we assume that $s>1$, we must have $n\geq 5$. 
Further, the fact that $s\leq n$ implies using \relem{1to1}(\ref{it:1to1a}) that 
the homomorphism $\map{\overline{\iota}}{B_s/\Gamma_3(P_s)}[B_n/\Gamma_3(P_n)]$ is injective. So it suffices to prove that 
if $n\in \N$ is relatively prime with $6$ then $B_n/\Gamma_3(P_n)$ possesses elements of order $n$.
In this case, $\delta_{n}^{n}$ is as given in~\reqref{deltann}, where $\mathcal{O}(n)=(n-1)(n-2)/6$ by \repr{basis}(\ref{it:basisa}), and $q(i)=n$ for all $1\leq i\leq \mathcal{O}(n)$. 
Let:
\begin{equation}\label{eq:defdeltanhat}
\widehat{\delta}_{n}=\theta \delta_{n},
\end{equation}
where we take $\theta$ to be an element of $\Gamma_2(P_n)/\Gamma_3(P_n)$, so $\theta=\prod_{i=1}^{\mathcal{O}(n)} \prod_{j=1}^{n} b_{i,j}^{r_{i,j}}$, where $r_{i,j}\in \Z$ for all $1\leq i\leq \mathcal{O}(n)$ and $1\leq j\leq n$. Then by~\reqref{defbij} and \repr{deltan}(\ref{it:deltanb}), we have:
\begin{align}
\widehat{\delta}_{n}^{\,n} & = (\theta \delta_{n})^{n}=\theta\ldotp \delta_{n}\theta \delta_{n}^{-1} \ldotp \delta_{n}^{2}\theta \delta_{n}^{-2} \cdots 
\delta_{n}^{n-1}\theta \delta_{n}^{-(n-1)} \ldotp \delta_{n}^{n}\notag\\
&= \prod_{i=1}^{\mathcal{O}(n)} \Bigl(\prod_{j=1}^{n} b_{i,j}\Bigr)^{\sum_{j=1}^{n} r_{i,j}} \ldotp \prod_{i=1}^{\mathcal{O}(n)} \prod_{j=1}^{n} b_{i,j}^{m_{i,1}} =\prod_{i=1}^{\mathcal{O}(n)} \Bigl(\prod_{j=1}^{n} b_{i,j}\Bigr)^{\sum_{j=1}^{n} r_{i,j}+m_{i,1}}.\label{eq:gendeltann}
\end{align}
Taking $k=3$ in~\reqref{sespnquot}, $\overline{\sigma}(\widehat{\delta}_{n})$ is of order $n$ in $\sn$, and it follows from this and~\reqref{gendeltann} that $\widehat{\delta}_{n}$ is of order $n$ if and only if:
\begin{equation}\label{eq:compat}
\text{$\sum_{j=1}^{n} r_{i,j}=-m_{i,1}$ for all $i=1,\ldots, \mathcal{O}(n)$.}
\end{equation}
This system of equations has infinitely-many solutions in the $r_{i,j}$, which yields infinitely-many elements in $B_n/\Gamma_3(P_n)$ of order $n$. 
\end{proof}

\begin{rem}
If $n\in \N$ and $\operatorname{\text{gcd}}(n,6)=1$, 
the proof of \reth{ntorsion} shows how to obtain explicit finite-order elements of $B_n/\Gamma_3(P_n)$.
However, as part of the process, we need to determine $\delta_n^n$ in terms of the elements of the basis $\mathcal{B}$. This seems to be an arduous computation in general, as the calculation given in \resec{gamma5} in the case $n=5$ indicates. 
%
\end{rem}

We are now able to prove \reth{torsion}.


\begin{proof}[Proof of \reth{torsion}]
Let $n\geq 5$, and let $\tau\in \N$ be such that $\gcd(\tau,6)=1$. If $\beta$ is an element of $B_n/\Gamma_3(P_n)$ of order $\tau$ then using~\reqref{sespnquot} and the fact that $P_n/\Gamma_3(P_n)$ is torsion free by \relem{ptorsion}(\ref{it:ptorsiona}), it follows that $\overline{\sigma}(\beta)$ is an element of $\sn$ of order $\tau$. Conversely, suppose that $x\in \sn$ is an element of order $\tau$, and let $\eta_{1}\cdots \eta_{t}$ be the cycle decomposition of $x$. Then $\tau=\operatorname{\text{lcm}}(n_{1},\ldots, n_{t})$, where for $i=1,\ldots, t$, $\eta_{i}$ is of length $n_{i}$. In particular, $\gcd(n_{i},6)=1$ for all $i=1,\ldots, t$, and $\sum_{i=1}^{t}\, n_{i} \leq n$ by~\cite{Ho}. Let $y\in \sn$ be such that
\begin{equation}\label{eq:permyxy}
\!\!\!yxy^{-1}= (1,\ldots,n_1)(n_1+1,\ldots,n_1+n_2) \cdots (n_1+\cdots+n_{t-1}+1,\ldots, n_1+\cdots+n_{t}).
\end{equation}
Since $\overline{\sigma}$ is surjective, there exists $\rho\in B_n/\Gamma_3(P_n)$ such that $\overline{\sigma}(\rho)=y$. For $i=1,\ldots, t$, let $\widehat{\delta}_{n_i}\in B_{n_i}/\Gamma_3(P_{n_i})$ be as defined in~\reqref{defdeltanhat}, where the coefficients of $\theta$ satisfy~\reqref{compat}. From the proof of \reth{ntorsion}, $\widehat{\delta}_{n_i}$ is of order $n_i$. Let $\beta=\overline{\zeta}(\widehat{\delta}_{n_1},\ldots, \widehat{\delta}_{n_t})$ in $B_n/\Gamma_3(P_{n})$. By taking $k=3$ in \relem{1to1}(\ref{it:1to1b}), we see that $\beta$ is of order $\tau$, and since the permutation associated to $\widehat{\delta}_{n_i}$ is the $n_{i}$-cycle $(1,\ldots,n_{i})$, it follows that $\overline{\sigma}(\beta)=yxy^{-1}$, and hence $\overline{\sigma}(\alpha)=x$, where $\alpha=\rho^{-1}\beta\rho$. In particular, $B_n/\Gamma_3(P_n)$ has elements of order $\tau$ if and only if $\sn$ does.
\end{proof}

One consequence of \reth{torsion} is the classification of the isomorphism classes of the finite cyclic subgroups of $B_n/\Gamma_3(P_n)$. Using \relem{1to1}(\ref{it:1to1b}), we may also caracterise the isomorphism classes of the finite Abelian subgroups of $B_n/\Gamma_3(P_n)$ in a manner similar to that of $B_n/\Gamma_2(P_n)$ given in~\cite[Theorem~6]{GGO}.

\begin{cor}\label{cor:existAb}
Let $n\geq 3$. Then there is a one-to-one correspondence between the isomorphism classes of the finite Abelian subgroups of $B_n/\Gamma_3(P_n)$ and those of the finite Abelian subgroups of $\sn$ whose order is relatively prime with $6$. 
\end{cor}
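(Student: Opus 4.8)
The plan is to use the homomorphism $\overline{\sigma}\colon B_n/\Gamma_3(P_n)\to \sn$ of~\reqref{sespnquot} to transfer finite Abelian subgroups between $B_n/\Gamma_3(P_n)$ and $\sn$, and to show that the two resulting lists of isomorphism classes coincide. The key structural observation is that the kernel $P_n/\Gamma_3(P_n)$ of $\overline{\sigma}$ is torsion free by \relem{ptorsion}(\ref{it:ptorsiona}); hence for any finite subgroup $K$ of $B_n/\Gamma_3(P_n)$ the intersection $K\cap (P_n/\Gamma_3(P_n))$ is trivial, so $\overline{\sigma}\vert_{K}$ is injective and $K\cong \overline{\sigma}(K)$. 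Consequently the correspondence I would exhibit is essentially the identity on isomorphism classes, and it suffices to prove two inclusions: \emph{(i)} every finite Abelian subgroup of $B_n/\Gamma_3(P_n)$ is isomorphic to a finite Abelian subgroup of $\sn$ of order coprime to $6$, and \emph{(ii)} conversely. Given these, well-definedness and injectivity of the correspondence on isomorphism classes follow at once from $K\cong\overline{\sigma}(K)$.

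For \emph{(i)}, let $K$ be a finite Abelian subgroup of $B_n/\Gamma_3(P_n)$. By the observation above, $\overline{\sigma}\vert_{K}$ embeds $K$ as an Abelian subgroup of $\sn$. Moreover \reth{almostb} asserts that $B_n/\Gamma_3(P_n)$ has no element of order $2$ or $3$, so by Cauchy's theorem $|K|$ is divisible by neither $2$ nor $3$, whence $\gcd(|K|,6)=1$. Thus $\overline{\sigma}(K)$ is a finite Abelian subgroup of $\sn$ of order coprime to $6$, as required.

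For \emph{(ii)}, let $A$ be a finite Abelian subgroup of $\sn$ with $\gcd(|A|,6)=1$; I must realise $A$, up to isomorphism, inside $B_n/\Gamma_3(P_n)$. I would write the primary decomposition $A\cong \prod_{i=1}^{s}\Z_{p_i^{a_i}}$, where each prime $p_i\geq 5$. Since $A$ embeds in $\sn$, it has a faithful permutation representation of degree $n$, and as the minimal such degree of a finite Abelian group equals the sum of the orders of its primary cyclic factors, we obtain $\sum_{i=1}^{s}p_i^{a_i}\leq n$. For each $i$, the group $\sn[p_i^{a_i}]$ contains a $p_i^{a_i}$-cycle, and since $\gcd(p_i^{a_i},6)=1$ and $p_i^{a_i}\geq 5$, \reth{torsion} (with $\tau=p_i^{a_i}$) yields an element $g_i$ of order $p_i^{a_i}$ in $B_{p_i^{a_i}}/\Gamma_3(P_{p_i^{a_i}})$, generating a subgroup $\ang{g_i}\cong \Z_{p_i^{a_i}}$. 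Setting $n_i=p_i^{a_i}$, the hypotheses $n_i\geq 2$ and $\sum_{i=1}^{s}n_i\leq n$ of \relem{1to1}(\ref{it:1to1b}) hold, so (with $k=3$) the induced homomorphism $\overline{\zeta}$ from $\prod_{i=1}^{s} B_{n_i}/\Gamma_3(P_{n_i})$ to $B_n/\Gamma_3(P_n)$ is injective. The image under $\overline{\zeta}$ of $\prod_{i=1}^{s}\ang{g_i}$ is then a finite Abelian subgroup of $B_n/\Gamma_3(P_n)$ isomorphic to $\prod_{i=1}^{s}\Z_{p_i^{a_i}}\cong A$, completing \emph{(ii)}. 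The cases $n\in\brak{3,4}$ are vacuous for non-trivial $A$, since any non-trivial Abelian group of order coprime to $6$ has minimal faithful degree at least $5$, forcing $n\geq 5$ whenever there is anything to realise.

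I expect the realisation step \emph{(ii)} to be the main obstacle, specifically the passage from an abstract isomorphism type to an honest subgroup of $B_n/\Gamma_3(P_n)$ for the prescribed $n$. The crux is twofold: first, justifying that $A\leq\sn$ forces $\sum_i p_i^{a_i}\leq n$, that is, that the minimal faithful permutation degree of an Abelian group is additive over its primary components, which is what guarantees enough ``room'' in the relevant mixed braid group; and second, assembling the individual cyclic pieces furnished by \reth{torsion} into a single subgroup isomorphic to the full product $A$, for which the injectivity of $\overline{\zeta}$ in \relem{1to1}(\ref{it:1to1b}) is precisely the tool required. Once these two points are secured, the remainder of the argument is formal.
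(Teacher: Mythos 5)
Your proposal is correct and follows essentially the same route as the paper, which argues as in~\cite[Theorem~6]{GGO}: project finite Abelian subgroups isomorphically into $\sn$ via $\overline{\sigma}$ using the torsion-freeness of $P_n/\Gamma_3(P_n)$ and the absence of $2$- and $3$-torsion from \reth{almostb}, and conversely realise any Abelian subgroup of order coprime to $6$ by combining the Hoffman bound on the primary decomposition, the cyclic pieces furnished by the $\widehat{\delta}_{n_i}$ of \reth{ntorsion}, and the injectivity of $\overline{\zeta}$ from \relem{1to1}(\ref{it:1to1b}). Your write-up simply makes explicit the details that the paper leaves to the reader.
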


\begin{proof}
The proof is similar to that of~\cite[Theorem~6]{GGO}, using \relem{1to1}(\ref{it:1to1b}) and the finite-order elements of the form~\reqref{defdeltanhat} constructed in the proof of \reth{ntorsion}.
\end{proof}

To end this section, we prove \reth{conjugacy}, which says that if $n\geq 5$, two finite-order elements of  $B_n/\Gamma_3(P_n)$ are conjugate if and only if their associated permutations are conjugate.

\begin{proof}[Proof of \reth{conjugacy}]
Let $n\geq 5$, and let $\alpha$ and $\beta$ be two finite-order elements of $B_n/\Gamma_3(P_n)$ whose associated permutations have the same cycle type. Conjugating $\beta$ in $B_n/\Gamma_3(P_n)$ if necessary, we may suppose that $\beta$ is as in the proof of \reth{torsion}, in particular, it is of order $\tau=\operatorname{\text{lcm}}(n_1,\ldots,n_t)$, where $\operatorname{\text{gcd}}(n_i,6)=1$ for all $i=1,\ldots,t$, and $\overline{\sigma}(\beta)$ is equal to the permutation given in~\reqref{permyxy}.
To complete the proof of the theorem, it suffices to show that if $\alpha\in B_n/\Gamma_3(P_n)$ is any finite-order element such that $\overline{\sigma}(\alpha)$ has the same cycle type as $\overline{\sigma}(\beta)$, then $\alpha$ and $\beta$ are conjugate in $B_n/\Gamma_3(P_n)$. Conjugating $\alpha$ if necessary, we may suppose further that $\overline{\sigma}(\alpha)=\overline{\sigma}(\beta)$. Since $\ker{\overline{\sigma}}=P_{n}/\Gamma_3(P_n)$ is torsion free by~\relem{ptorsion}(\ref{it:ptorsiona}), it follows that $\alpha$ is also of order $\tau$.

Let $\mathcal{B}'$ be the union of the elements of the basis $\mathcal{B}$ defined in~\reqref{basis1} and their inverses. From~\reqref{conjugalpha} and~\reqref{alphaijk}, $\beta$ acts on $\mathcal{B}'$ by conjugation, and if $\beta \alpha_{i,j,k} \beta^{-1}= \alpha_{r,s,t}^{-1}$ for some $1\leq i<j<k\leq n$ and $1\leq r<j<s\leq t$, then either $i=n_1+\cdots +n_m+1<j\leq n_1+\cdots +n_{m+1}<k$ for some $0\leq m< t$, or $i< j=n_1+\cdots +n_m+1<k\leq n_1+\cdots +n_{m+1}$ for some $1\leq m< t$. From this and the fact that $n_l$ is odd for all $l=1,\ldots, t$, we see that under this action, the orbits of $\alpha_{i,j,k}$ and $\alpha_{i,j,k}^{-1}$ are disjoint for all $1\leq i<j<k\leq n$, and that the orbit of $\alpha_{i,j,k}^{-1}$ is obtained by taking inverses of the elements of the orbit of $\alpha_{i,j,k}$. We then choose a transversal $\bigl\{ \bigl. b_{i,1},b_{i,1}^{-1}\bigr\rvert 1\leq i\leq q\bigr\}$ for this action on $\mathcal{B}'$, where for $1\leq i\leq q$, $b_{i,1}\in \mathcal{B}$, and
we let $s_i$ denote the length of the orbit of $b_{i,1}$. Observe that $s_i$ divides $\tau$. Then $\mathcal{B}''=\setl{b_{i,j}}{\text{$1\leq i\leq q$ and $1\leq j\leq s_i$}}$ is a basis of $\Gamma_2(P_n)/\Gamma_3(P_n)$, where:
\begin{equation}
\text{$b_{i,j}=\beta^{j-1}b_{i,1} \beta^{-(j-1)}$ for all $1\leq i\leq q$ and $1\leq j\leq s_i$.}
\end{equation}

As in the proof of \repr{deltan}(\ref{it:deltana}), let $\map{f}{B_n/\Gamma_3(P_n)}[B_n/\Gamma_2(P_n)]$ be the projection given in~\reqref{gammaquot} with $j=3$, and let $\map{\overline{\sigma}'}{B_n/\Gamma_2(P_n)}[\sn]$ be the homomorphism given in~\reqref{sespnquot} with $k=2$. Since $\ker{f}=\Gamma_2(P_n)/\Gamma_3(P_n)$ is torsion free, $f(\alpha)$ and $f(\beta)$ are elements of $B_n/\Gamma_2(P_n)$ of order $\tau$. Further, $\overline{\sigma}=\overline{\sigma}'\circ f$, so $\overline{\sigma}'(f(\alpha))=\overline{\sigma}'(f(\beta))$, and applying~\cite[Theorem~5]{GGO}, there exists $\xi \in B_n/\Gamma_2(P_n)$ such that $\xi f(\alpha)\xi^{-1}= f(\beta)$. Since $f$ is surjective, there exists $\xi'\in B_n/\Gamma_3(P_n)$ such that $f(\xi' \alpha\xi'^{-1})= f(\beta)$. So conjugating $\alpha$ if necessary, there exists $\theta \in \Gamma_2(P_n)/\Gamma_3(P_n)$ such that $\theta\beta= \alpha$.

It suffices to show that there exists $\Omega \in \Gamma_2(P_n)/\Gamma_3(P_n)$ such that $\Omega \alpha\Omega^{-1}=\beta$, or equivalently that:
\begin{equation}\label{eq:Omegaconj}
\theta=(\beta \Omega \beta^{-1})\Omega^{-1},
\end{equation}
using the fact that $\theta$ commutes with $\Omega$. Let $\theta=\prod_{\substack{1\leq i\leq q\\ 1\leq j\leq s_{i}}}b_{i,j}^{r_{i,j}}$, and 
$\Omega=\prod_{\substack{1\leq i\leq q\\ 1\leq j\leq s_{i}}}b_{i,j}^{x_{i,j}}$, where $r_{i,j},x_{i,j}\in \Z$ for all $1\leq i\leq q$ and $1\leq j\leq s_{i}$. 
Since $\theta\beta=\alpha$, the elements $\beta$ and $\theta\beta$ are of order $\tau$, and so:
\begin{equation}\label{eq:conjtheta}
1=(\theta\beta)^{\tau}=\theta (\beta\theta\beta^{-1}) \cdots (\beta^{\tau-1}\theta\beta^{-(\tau-1)}) \beta^{\tau}= \theta (\beta\theta\beta^{-1}) \cdots (\beta^{\tau-1}\theta\beta^{-(\tau-1)}).
\end{equation}
From the construction of the basis $\mathcal{B}''$, it follows from~\reqref{conjtheta} that:
\begin{equation*}
1=\theta (\beta\theta\beta^{-1}) \cdots (\beta^{\tau-1}\theta\beta^{-(\tau-1)})= \prod_{1\leq i\leq q} \biggl(\prod_{1\leq j\leq s_{i}} b_{i,j} \biggr)^{\tau(\sum_{1\leq j\leq s_{i}}r_{i,j})/s_i},
\end{equation*}
from which we conclude that $\sum_{1\leq j\leq s_{i}}r_{i,j}=0$ for all $1\leq i\leq q$. In a similar manner,~\reqref{Omegaconj} may be written as:
\begin{equation*}
\prod_{\substack{1\leq i\leq q\\ 1\leq j\leq s_{i}}}b_{i,j}^{r_{i,j}}= \prod_{\substack{1\leq i\leq q\\ 1\leq j\leq s_{i}}}b_{i,j}^{x_{i,j-1}-x_{i,j}},
\end{equation*}
where the index $j-1$ of $x_{i,j-1}$ is taken modulo $s_i$. So for all $i=1,\ldots,q$, we obtain a system of equations $x_{i,j-1}-x_{i,j}=r_{i,j}$ for all $1\leq j\leq s_i$ that is subject to the compatibility condition $\sum_{1\leq j\leq s_{i}}r_{i,j}=0$, and it may be seen easily that each such system admits a solution. Using~\reqref{Omegaconj}, we conclude that $\alpha$ and $\beta$ are conjugate as required.
\end{proof}

\section{Some examples with a small numbers of strings}\label{sec:small}
  
%


In this section, we study a couple of examples where the number of strings is small. In \resec{gamma3}, we determine, up to conjugacy, the almost-Bierberbach subgroups of $B_3/\Gamma_3(P_3)$ that contain $P_3/\Gamma_3(P_3)$, and we identify them using the classification of~\cite{Dekimpe}. In \resec{gamma5},  we calculate explicitly $\delta_5^5$ in $\Gamma_2(P_5)/\Gamma_3(P_5)$ in terms of the basis $\mathcal{B}$. This example illustrates the computational difficulties that we encounter with respect to the constructions of \repr{deltan} and \reth{ntorsion}. We finish the paper with a remark concerning the study of the quotients $B_n/\Gamma_k(P_n)$ for $k>3$.

\subsection{Some almost-Bieberbach subgroups of $B_3/\Gamma_3(P_3)$}\label{sec:gamma3}
  
In this section, we describe the almost-Bieberbach groups $\sigma^{-1}(H)/\Gamma_3(P_3)$, 
where $H$ is a subgroup of $\sn[3]$. Recall that representatives of the conjugacy classes of subgroups of $\sn[3]$ are given by $\brak{\id}$, $\ang{(1,2)}$, $\ang{(1,2,3)}$ and $\sn[3]$. 
As we shall see now in \reth{4dimab}, these groups are of dimension $4$ and their holonomy group is $H$, and that any subgroup of $B_3/\Gamma_3(P_3)$ containing $P_3/\Gamma_3(P_3)$ is in fact of the form $\sigma^{-1}(H)/\Gamma_3(P_3)$, where  $H$ is a subgroup of $\sn[3]$.

\begin{thm}\label{th:4dimab}\mbox{}
\begin{enumerate}
\item\label{it:4dimab0} 
Let $K$ be a subgroup 
of $B_3/\Gamma_3(P_3)$ that contains $P_3/\Gamma_3(P_3)$.
Then $K$ is conjugate to $\sigma^{-1}(H)/\Gamma_3(P_3)$, where $H$ is one of the subgroups $\brak{\id}$, $\ang{(1,2)}$, $\ang{(1,2,3)}$ or $\sn[3]$ of $\sn[3]$.

\item Consider the four subgroups of $\sn[3]$ given in part~(\ref{it:4dimab0}).
\begin{enumerate}

\item\label{item:4dimab1a} If $H=\brak{\id}$, the group $\sigma^{-1}(H)/\Gamma_3(P_3)=P_3/\Gamma_3(P_3)$ has a presentation whose generators are $a=A_{1,3}$, $b=A_{2,3}$, $c=A_{1,2}$ and $d=[A_{1,2}, A_{2,3}]$, and that are subject to the following relations:
   \begin{multicols}{3}
       \begin{enumerate}[(1)]
   \item $[b,a]=d$
   \item $[c,a]=d^{-1}$
   \item $[c,b]=d$
   \item $[d,a]=1$
   \item $[d,b]=1$
   \item $[d,c]=1$.
\end{enumerate}
   \end{multicols}
\noindent
For each of the remaining groups, a presentation is obtained by adding extra generators and relations to those of $P_3/\Gamma_3(P_3)$ given in~(\ref{item:4dimab1a}). In each case, we will just indicate these extra generators and relations.

\item\label{item:4dimab1b} If $H=\ang{(1,2)}$, the group $\sigma^{-1}(H)/\Gamma_3(P_3)$ has a presentation with one extra generator $\alpha=\sigma_1$
\noindent
and five extra relations:
   \begin{multicols}{3}
       \begin{enumerate}[(1)]
    \item $\alpha^2=c$
    \item $\alpha d \alpha^{-1}=d^{-1}$
\item $\alpha a \alpha^{-1}=b$
\item $\alpha b \alpha^{-1}=ad^{-1}$
\item $\alpha c \alpha^{-1}=c$.
\end{enumerate}
   \end{multicols}
   \item If $H=\ang{(1,2,3)}$, the group $\sigma^{-1}(H)/\Gamma_3(P_3)$ has one extra generator $\alpha=\sigma_2\sigma_1^{-1}$
\noindent
and five extra relations:
      \begin{multicols}{3}
       \begin{enumerate}[(1)]
    \item $\alpha^3=d^{-1}$
    \item $\alpha d \alpha^{-1}=d$
\item $\alpha a \alpha^{-1}=bd$
\item $\alpha b \alpha^{-1}=cd^{-1}$
\item $\alpha c \alpha^{-1}=a$.
\end{enumerate}
   \end{multicols}

   \item\label{item:4dimab1d} If $H=\sn[3]$, the group $\sigma^{-1}(H)/\Gamma_3(P_3)=B_{3}/\Gamma_3(P_3)$ 
  has two extra generators $\alpha=\sigma_2\sigma_1$ and $\beta=\sigma_1$
\noindent
and eleven extra relations:
   \begin{multicols}{3}
       \begin{enumerate}[(1)]
    \item $\alpha^3=abc$
    \item $\beta^2=c$
    \item $\alpha d \alpha^{-1}=d$
    \item $\beta d \beta^{-1}=d^{-1}$
    \item $\alpha a \alpha^{-1}=b$
\item $\alpha b \alpha^{-1}=c$
\item $\alpha c \alpha^{-1}=a$
\item $\beta a \beta^{-1}=b$
\item $\beta b \beta^{-1}=ad^{-1}$
\item $\beta c \beta^{-1}=c$
\item $\beta \alpha \beta^{-1}=b^{-1}\alpha^2$.
\end{enumerate}
   \end{multicols}
  \end{enumerate}
  \end{enumerate}
  \end{thm}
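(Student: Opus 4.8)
The plan is to deduce part~(\ref{it:4dimab0}) from the subgroup correspondence for the extension~\reqref{sespnquot} with $n=k=3$, and to obtain the presentations of part~(2) by applying the extension-presentation method of~\cite[Proposition~1, p.~139]{Johnson}, exactly as in the proof of \repr{nil}, to the short exact sequences
\[
1 \to P_3/\Gamma_3(P_3) \to \sigma^{-1}(H)/\Gamma_3(P_3) \stackrel{\overline{\sigma}}{\to} H \to 1
\]
for the four subgroups $H$ of $\sn[3]$. For part~(\ref{it:4dimab0}), since $K$ contains $P_3/\Gamma_3(P_3)=\ker \overline{\sigma}$, we have $K=\overline{\sigma}^{-1}(\overline{\sigma}(K))$, so $K$ is determined by the subgroup $\overline{\sigma}(K)$ of $\sn[3]$. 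Every subgroup of $\sn[3]$ is conjugate to exactly one of $\brak{\id}$, $\ang{(1,2)}$, $\ang{(1,2,3)}$ or $\sn[3]$; picking $g\in \sn[3]$ with $g\,\overline{\sigma}(K)\,g^{-1}=H$ and any lift $\widetilde{g}\in B_3/\Gamma_3(P_3)$ of $g$, conjugation by $\widetilde{g}$ carries $K=\overline{\sigma}^{-1}(\overline{\sigma}(K))$ to $\overline{\sigma}^{-1}(H)=\sigma^{-1}(H)/\Gamma_3(P_3)$, which proves the claim.

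For part~(2)(\ref{item:4dimab1a}) I would specialise \repr{nil}(\ref{it:relnsalphaa}) to $n=3$: the generating set reduces to $A_{1,2},A_{1,3},A_{2,3}$ together with $\alpha_{1,2,3}=[A_{1,2},A_{2,3}]$, and the three instances of the commutator relation with $\operatorname{Card}\brak{i,j,l,m}=3$ become precisely relations~(1)--(3), while the commutativity relations become~(4)--(6), after the substitution $a=A_{1,3}$, $b=A_{2,3}$, $c=A_{1,2}$, $d=\alpha_{1,2,3}$. For each of the three remaining subgroups, the extension method adjoins, for every generator of $H$, one lift, together with (i) one relation per defining relator of $H$, obtained by expressing the lifted relator as a word in $a,b,c,d$, and (ii) one conjugation relation per generator of $P_3/\Gamma_3(P_3)$. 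I would take the lifts $\alpha=\sigma_1$ for $\ang{(1,2)}$, $\alpha=\sigma_2\sigma_1^{-1}=\delta_{0,3}$ for $\ang{(1,2,3)}$, and $\alpha=\sigma_2\sigma_1$ together with $\beta=\sigma_1$ for $\sn[3]$; a routine count then shows that these produce exactly five, five and eleven extra relations respectively.

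The conjugation relations~(ii) are read off directly from~\reqref{action1} and~\reqref{action2}. For instance, taking $k=1$ gives $\sigma_1 A_{1,3}\sigma_1^{-1}=A_{2,3}$, $\sigma_1 A_{2,3}\sigma_1^{-1}=A_{1,3}\alpha_{1,2,3}^{-1}$, $\sigma_1 A_{1,2}\sigma_1^{-1}=A_{1,2}$ and $\sigma_1\alpha_{1,2,3}\sigma_1^{-1}=\alpha_{1,2,3}^{-1}$, which are relations~(3)--(5) and~(2) of~(\ref{item:4dimab1b}); the analogous evaluations for $\sigma_2\sigma_1^{-1}$ and for the pair $\sigma_2\sigma_1,\sigma_1$ yield the conjugation relations in the other two cases. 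The relator relations~(i) are where the real work lies: $\alpha^2=\sigma_1^2=A_{1,2}=c$ is immediate, and $\alpha^3=\delta_{0,3}^{3}=\alpha_{1,2,3}^{-1}=d^{-1}$ is exactly the computation already carried out in the proof of \reth{almostb}.

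The main obstacle will be the two genuinely delicate identities in the $\sn[3]$ case: the relation $\alpha^3=(\sigma_2\sigma_1)^3=abc$, which requires reducing the image of the full twist $\Delta^2$ modulo $\Gamma_3(P_3)$, and the mixed relation $\beta\alpha\beta^{-1}=b^{-1}\alpha^2$ coming from the relator $yxy^{-1}x$ in the presentation $\sn[3]=\ang{x,y\mid x^3,\,y^2,\,yxy^{-1}x}$. Both demand careful bookkeeping of the $\Gamma_2(P_3)/\Gamma_3(P_3)$-correction terms, that is, the powers of $d$, using~\reqref{action1},~\reqref{action2} and the commutator relations of~\repr{nil}(\ref{it:relnsalphaa}). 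As a consistency check one verifies that $abc$ is central: applying relations~(5)--(7) and~(8)--(10) together with $ba=d\,ab$, $ca=d^{-1}ac$ and $cb=d\,bc$ shows that both $\alpha$- and $\beta$-conjugation fix $abc$, as they must since $\Delta^2$ is central in $B_3$. Once these identities are established, the stated eleven relations for $\sn[3]$ (and five each for the two cyclic cases) are precisely those delivered by the extension-presentation method.
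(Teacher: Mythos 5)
Your proposal is correct and follows essentially the same route as the paper: part~(a) via the correspondence $K=\overline{\sigma}^{-1}(\overline{\sigma}(K))$ together with lifting a conjugating element of $\sn[3]$, and part~(b) by applying the extension-presentation method of~\cite[Proposition~1, p.~139]{Johnson} to $1\to P_3/\Gamma_3(P_3)\to \sigma^{-1}(H)/\Gamma_3(P_3)\to H\to 1$ with the conjugation relations read off from~\reqref{action1} and~\reqref{action2}. The only cosmetic difference is that the paper treats the conjugacy of the order-$2$ subgroups separately (using that the other subgroups of $\sn[3]$ are normal), whereas you handle all cases uniformly; your identification of $\alpha^3=abc$ with the full twist and of $\beta\alpha\beta^{-1}=b^{-1}\alpha^2$ with the lifted relator are exactly the computations the paper leaves implicit.
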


\begin{proof}\mbox{}
\begin{enumerate}[(a)]
\item 
First suppose that $K_{1}$ and $K_{2}$ are subgroups of $B_3/\Gamma_3(P_3)$ that contain $P_3/\Gamma_3(P_3)$, and for which $\overline{\sigma}(K_{1})=\overline{\sigma}(K_{2})$. We claim that $K_{1}=K_{2}$. To see this, let $x\in K_{1}$. Since $\overline{\sigma}(K_{1})=\overline{\sigma}(K_{2})$, there exists $y\in K_{2}$ such that $\overline{\sigma}(x)=\overline{\sigma}(y)$, and so there exists $z\in P_{3}/\Gamma_3(P_3)$ such that $y^{-1}x=z$. But $P_{3}/\Gamma_3(P_3)\subset K_{1}\cap K_{2}$ by hypothesis, so $x=yz\in K_{2}$, which proves that $K_{1}\subset K_{2}$. A similar argument shows that $K_{2}\subset K_{1}$, which proves the claim. So if $K$ is a subgroup $B_3/\Gamma_3(P_3)$ that contains $P_3/\Gamma_3(P_3)$, $K=\overline{\sigma}^{-1}(\overline{\sigma}(K))$, in particular $K=\overline{\sigma}^{-1}(H)$, where $H=\overline{\sigma}(K)$ is a subgroup of $\sn[3]$. Since all such subgroups are normal in $\sn[3]$, with the exception of those of order $2$, to complete the proof of part~(\ref{it:4dimab0}), it suffices to show that if $K_{1}$ and $K_{2}$ are subgroups of $B_3/\Gamma_3(P_3)$ that contain $P_3/\Gamma_3(P_3)$, and for which $\overline{\sigma}(K_{1})=\ang{(1,2)}$ and $\overline{\sigma}(K_{2})=\tau$, where $\tau\in \brak{(1,3),(2,3)}$, then $K_{1}$ and $K_{2}$ are conjugate in $B_3/\Gamma_3(P_3)$. To see this, let $\tau'\in \sn[3]$ be such that $\tau' (1,2)\tau'^{-1}=\tau$, and let $\widetilde{\tau}' \in B_3/\Gamma_3(P_3)$ be such that $\overline{\sigma}(\widetilde{\tau}')=\tau'$. Then $\widetilde{\tau}' K_{1} \widetilde{\tau}'^{-1}$ contains $\ker{\overline{\sigma}}=P_3/\Gamma_3(P_3)$, and $\overline{\sigma}(\widetilde{\tau}' K_{1} \widetilde{\tau}'^{-1})=\overline{\sigma}(K_{2})=\ang{\tau}$. The result then follows from the first part of the proof.

\item  The case (i) follows from  \repr{nil}(\ref{it:relnsalphaa}).
 For  cases~(\ref{item:4dimab1b})--(\ref{item:4dimab1d}) we apply  the techniques of~\cite[Proposition~1, p.~139]{Johnson} 
to the extension   
   \begin{equation*}
    1\to P_3/\Gamma_3(P_3)\to \sigma^{-1}(H)/\Gamma_3(P_3)\to H\to 1.
   \end{equation*}
The extra relations involving the action by conjugacy of $H$ on the kernel 
follow from equations~\reqref{action1} and~\reqref{action2}.\qedhere
\end{enumerate}
\end{proof}

By \rerems{acgen}(\ref{it:acgena}), 
the groups of the form $\sigma^{-1}(H)/\Gamma_3(P_3)$ described in \reth{4dimab} are almost-crystallographic. 
Further, since $\ker{\overline{\sigma}}$ is torsion free by \relem{ptorsion}(\ref{it:ptorsiona}) and the torsion of $\sn[3]$ divides $6$, it follows from \reth{almostb} that these groups are also almost-Bieberbach. Using \reth{4dimab}, we now identify these groups 
with those given in the classification of $4$-dimensional almost-Bieberbach groups with $2$-step nilpotent 
subgroup given in~\cite[Section~7.2]{Dekimpe}. Note that by~\cite[Remark~2.5]{GPS}, if $M$ is an infra-nilmanifold whose fundamental group is $E$, then it is \emph{orientable} if and only if the image of the representation $\map{\theta_F}{F}[\operatorname{\text{GL}}(n,\Z)]$ given by \req{theta} is contained in $\operatorname{\text{SL}}(n,\Z)$.   

\begin{cor}\label{cor:4dim}
Let $H$ be a subgroup of $\sn[3]$. Then $\sigma^{-1}(H)/\Gamma_3(P_3)$ is a $4$-dimensional almost-Bieberbach group with $2$-step nilpotent 
subgroup, is the fundamental group of an orientable $4$-infra-nilmanifold $X_H$, and is isomorphic to:
\begin{enumerate}
\item\label{it:dekimpea} group number $1$, $Q=P1$, given in~\cite[p.~169]{Dekimpe} with $k_1=k_{3}=1$ and $k_2=-1$ if $H=\brak{1}$.
\item group number $9$, $Q=Cc$, given in~\cite[pp.~173-174]{Dekimpe} with $k_1=1$, $k_2=-1$, $k_3=k_{4}=0$ and non-trivial action if $H=\ang{(1,2)}$. 
\item group number $146$, $Q=R3$, given in~\cite[p.~207]{Dekimpe} with $k_1=k_2=1$ and $k_3=k_4=-1$ if $H=\ang{(1,2,3)}$.
\item\label{it:dekimped} group number $161$, $Q=R3c$, given in~\cite[p.~209]{Dekimpe} with $k_1=1$ and $k_2=k_3=k_4=k_5=0$ if $H=\sn[3]$.
\end{enumerate}
\end{cor}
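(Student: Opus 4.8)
The properties of being almost-Bieberbach, of dimension $4$, and of possessing a $2$-step nilpotent subgroup are, for the most part, already in hand, so I would begin by assembling them. Each $\sigma^{-1}(H)/\Gamma_3(P_3)$ is almost-crystallographic by \rerems{acgen}(\ref{it:acgena}); its dimension equals the Hirsch length of $P_3/\Gamma_3(P_3)$, which is $\binom{3}{2}+\binom{3}{3}=4$ by \reth{acgroups}; it is torsion free, and hence almost-Bieberbach, because $\ker{\overline{\sigma}}=P_3/\Gamma_3(P_3)$ is torsion free by \relem{ptorsion}(\ref{it:ptorsiona}), the order of $\sn[3]$ divides $6$, and $B_3/\Gamma_3(P_3)$ has no $2$- or $3$-torsion by \reth{almostb}; and its distinguished $2$-step nilpotent subgroup is $P_3/\Gamma_3(P_3)$ itself, which has nilpotency class $k-1=2$. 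Thus the substance of the proof lies in the orientability assertion and in the explicit identification with Dekimpe's list.

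For orientability, the plan is to apply the criterion of \cite[Remark~2.5]{GPS}: the infra-nilmanifold $X_H$ is orientable precisely when the image of the holonomy representation $\map{\theta_F}{F}[\operatorname{\text{GL}}(4,\Z)]$ of \req{theta}, where $F=H$, is contained in $\operatorname{\text{SL}}(4,\Z)$. Here the nilpotent lattice $\Lambda=P_3/\Gamma_3(P_3)$ has lower central quotients $Z_1=P_3/\Gamma_2(P_3)\cong\Z^3$, with basis the classes of $a=A_{1,3}$, $b=A_{2,3}$ and $c=A_{1,2}$, and $Z_2=\Gamma_2(P_3)/\Gamma_3(P_3)\cong\Z$, generated by $d$, so that $\det\theta_F(h)$ factors as the product of the determinants of the actions of $h$ on $Z_1$ and on $Z_2$. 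First I would read off these actions from the conjugation relations of \reth{4dimab}. The reflection generator $\sigma_1$ interchanges $a$ and $b$ and fixes $c$ on $Z_1$ (determinant $-1$) while sending $d\mapsto d^{-1}$ on $Z_2$ (determinant $-1$), for a total of $+1$; the $3$-cycle generator permutes $a,b,c$ cyclically on $Z_1$ (determinant $+1$) and fixes $d$ (determinant $+1$), again giving $+1$. Since the trivial group contributes the identity, it follows in all four cases that $\theta_F(H)\subseteq\operatorname{\text{SL}}(4,\Z)$, whence $X_H$ is orientable; when $H=\brak{1}$ the space is in fact a nilmanifold.

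The remaining and, I expect, hardest task is to match each group with the quoted entry in the classification of $4$-dimensional almost-Bieberbach groups with $2$-step nilpotent subgroup given in \cite[Section~7.2]{Dekimpe}. The strategy is, for each $H$, to compare the presentation produced in \reth{4dimab} against Dekimpe's normal form for the central extension $1\to\Gamma_2(P_3)/\Gamma_3(P_3)\to\sigma^{-1}(H)/\Gamma_3(P_3)\to H\to 1$: the holonomy group together with the isomorphism type of this extension isolates a single candidate in the list, after which the structure parameters $k_i$ are pinned down by an explicit change of generators carrying the relations of \reth{4dimab} into Dekimpe's. The difficulty here is not conceptual but one of bookkeeping, namely reconciling two different conventions for the generators and relations and checking that the resulting $k_i$ are exactly those recorded in parts~(\ref{it:dekimpea})--(\ref{it:dekimped}).
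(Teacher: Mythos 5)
Your proposal is correct and follows essentially the same route as the paper: the almost-Bieberbach, dimension and nilpotency claims are assembled from \reco{almostb}, \repr{nilmanifold} and \rerems{rank1}(\ref{it:rank1b}); orientability is checked via the criterion of~\cite[Remark~2.5]{GPS} by computing the holonomy action on the ordered generators $a,b,c,d$ from the conjugation relations of \reth{4dimab} (your determinant bookkeeping agrees with the explicit matrices $M_1,M_2$ the paper writes down); and the match with Dekimpe's list is made by comparing the presentations of \reth{4dimab} with the normal forms of~\cite[Section~7.2]{Dekimpe}, which the paper likewise leaves at the level of a routine verification of the parameters $k_i$.
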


\begin{rem}
If $G$ is a group and $a,b\in G$, then the notation used in~\cite{Dekimpe} for the commutator $[a,b]$, namely $[a,b]=a^{-1}b^{-1}ab$, is different to that used in this paper. However $[a,b] \equiv [a^{-1}, b^{-1}]$ modulo $\Gamma_3(G)$, so the difference in notation does not cause any problems in the identification of our subgroups with those of~\cite{Dekimpe}. 
\end{rem}

\begin{proof}[Proof of \reco{4dim}]
By \rerems{rank1}(\ref{it:rank1b}) and \repr{nilmanifold}, 
the groups $P_3/\Gamma_2(P_3)$ and $\Gamma_2(P_3)/\Gamma_3(P_3)$ are torsion free and their respective ranks are $3$ and $1$, and the Hirsch length of $P_3/\Gamma_3(P_3)$ is equal to $4$. 
So by \reco{almostb}, $\sigma^{-1}(H)/\Gamma_3(P_3)$ is a $4$-dimensional almost-Bieberbach group with $2$-step nilpotent subgroup. In order to identify the group $\sigma^{-1}(H)/\Gamma_3(P_3)$ with the corresponding group of~~\cite{Dekimpe} for each subgroup $H$ of $\sn[3]$, it suffices to use \reth{4dimab} and to apply the classification of~\cite[Section~7.2]{Dekimpe}, where the $k_{i}$ are as given in the statement of parts~(\ref{it:dekimpea})--(\ref{it:dekimped}).

  
It remains to show that for each subgroup $H$ of $\sn[3]$, the manifold $X_H$
are orientable. To see this, by the paragraph preceding the statement of \reco{4dim},
it suffices to show that the image of each representation $\map{\theta_H}{H}[\operatorname{\text{GL}}(4,\Z)]$ lies in $\operatorname{\text{SL}}(4,\Z)$.
We exhibit the matrices $\theta_H(\alpha)$ and $\theta_H(\beta)$, where
$\alpha$ and $\beta$ are the generators  given in \reth{4dimab} that act on the (ordered) elements $a,b,c$ and $d$. If $H=\brak{\id}$, the representation $\theta_H$ is clearly trivial, and the result follows.  
For the remaining cases, consider the elements $M_1=\left(\begin{smallmatrix}
0 & 1 & 0 & 0 \\
1 & 0 & 0 & 0 \\
0 & 0 & 1 & 0 \\
0 & 0 & 0 & -1 
\end{smallmatrix}\right)$ and $M_2=\left(\begin{smallmatrix}
0 & 0 & 1 & 0 \\
1 & 0 & 0 & 0 \\
0 & 1 & 0 & 0 \\
0 & 0 & 0 & 1 
\end{smallmatrix}\right)$ of $\operatorname{\text{SL}}(4,\Z)$. If $H=\ang{(1,2)}$, $\theta_H(\alpha)=M_1$, if $H=\ang{(1,2,3)}$, $\theta_H(\alpha)=M_2$, and if $H=\sn[3]$, $\theta_H(\alpha)=M_2$ and $\theta_H(\beta)=M_1$.
\end{proof}

\begin{rem}\label{rem:S4}
Since $\sn[4]$ has $11$ non-conjugate subgroups (including the trivial group and the whole group), 
then in a similar manner, we may show that there are eleven non-isomorphic almost-Bieberbach subgroups of $B_4/\Gamma_3(P_4)$ of the form ${\sigma}^{-1}(H)/\Gamma_3(P_4)$ with holonomy group $H$, each of dimension $10$ using \repr{nilmanifold},
where $H$ runs through the subgroups of $\sn[4]$.
\end{rem}

\subsection{Some  explicit   finite-order elements in $B_5/\Gamma_3(P_5)$}\label{sec:gamma5}



By~ \cite[Corollary~4]{GGO},
$B_5/\Gamma_2(P_5)$ possesses elements of order $3$ and $5$. From  \reth{almostb}, $B_5/\Gamma_3(P_5)$ does not have elements of order $3$, but \reth{ntorsion} implies that there exist elements of order $5$. 
Further, if $\gcd(n,6)=1$, elements of order $n$ in $B_n/\Gamma_3(P_n)$ may be determined explicitly using the construction given in the proof of \reth{ntorsion} provided we are able to compute $\delta_{n}^{n}$ in terms of the elements of the basis $\mathcal{B}$ of the group $\Gamma_2(P_n)/\Gamma_3(P_n)$ described in~\reqref{basis1}. We now carry out this calculation in the case $n=5$.


Using \req{conjugAij1} and with the notation of \relem{lem28}, we start by describing the action by conjugation of $\alpha_5^{-1}=\alpha_{0,5}^{-1}$, where $\alpha_{0,5}=\sigma_1\sigma_2 \sigma_3\sigma_4$, on the elements of the generating set $\brak{A_{i,j}}_{1\leq i<j\leq 5}$ of $P_5$:
\begin{equation}\label{eq:actionalpha5}
\alpha_5^{-1}=\sigma_4^{-1}\sigma_3^{-1}\sigma_2^{-1}\sigma_1^{-1} : 
\begin{cases}
A_{1,2} \mapsto  [A_{1,2}A_{1,3}A_{1,4}, A_{1,5}]A_{1,5}\\
A_{1,3} \mapsto  [A_{1,2}A_{2,3}A_{2,4}, A_{2,5}]A_{2,5}\\
A_{1,4} \mapsto  [A_{1,3}A_{2,3}A_{3,4}, A_{3,5}]A_{3,5}\\
A_{1,5} \mapsto  [A_{1,4}A_{2,4}A_{3,4}, A_{4,5}]A_{4,5}\\
A_{i,j} \mapsto A_{i-1,j-1} \;\text{if $2\leq i<j\leq 5$.}
\end{cases}
\end{equation}
From 
\req{basis1},
the following 10 elements form a basis of $\Gamma_2(P_5)/\Gamma_3(P_5)$:
\begin{equation}\label{eq:10elmt}
\begin{array}{l}
a_1=\left[A_{1,2}, A_{2,3}\right], \; a_2=\left[A_{1,2}, A_{2,5}\right], \; a_3=\left[A_{1,4}, A_{4,5}\right], \; a_4=\left[A_{3,4}, A_{4,5}\right]\\
a_5=\left[A_{2,3}, A_{3,4}\right], \;
b_1=\left[A_{1,2}, A_{2,4}\right], \; b_2=\left[A_{1,3}, A_{3,5}\right], \; b_3=\left[A_{2,4}, A_{4,5}\right]\\
b_4=\left[A_{1,3}, A_{3,4}\right], \; b_5=\left[A_{2,3}, A_{3,5}\right],
\end{array}
\end{equation}
and that using~\reqref{conjugalpha} and~\reqref{alphaijk}, 
under the action by conjugation by $\alpha_5^{-1}$ (considered as an element of $B_{5}/\Gamma_3(P_5)$), this basis splits into two orbits of length $5$ of the form:
\begin{equation*}
\text{$a_1 \mapsto  a_2  \mapsto  a_3  \mapsto  a_4 \mapsto  a_5$ and $b_1 \mapsto  b_2  \mapsto  b_3  \mapsto  b_4  \mapsto b_5$.}
\end{equation*}
In order to obtain an element $\alpha$ of finite order in $B_{5}/\Gamma_{3}(B_{5})$, by the construction of the proof of \reth{ntorsion}, it suffices to compute $\delta_5^{5}$ in terms of the basis of $\Gamma_2(P_5)/\Gamma_3(P_5)$ given in~\reqref{10elmt}. Let:
\begin{equation*}
\text{$c_1=[A_{1,2}A_{2,3}A_{2,4}, A_{2,5}]$, $c_2=[A_{1,5}A_{1,2}A_{1,3}, A_{1,4}]$ and $c_3=[A_{1,2}A_{1,3}A_{1,4}, A_{1,5}]$.}
\end{equation*}
In $\Gamma_2(P_5)/\Gamma_3(P_5)$,  we have $c_1c_2c_3=b_1^{-1}b_2^{-1}b_3^{-1}b_4^{-1}b_5^{-1}$.
To see this, recall that 
if $a,b$ and $c$ are elements of 
a group $G$, we have a Witt-Hall identity $[ab, c]=[a,[b,c]][b,c][a,c]$~\cite[Theorem~5.1]{MKS}.
So using \req{alphaijk}, in $\Gamma_2(P_5)/\Gamma_3(P_5)$ we obtain:
\begin{align}
c_1c_2c_3 & = [A_{1,2}A_{2,3}A_{2,4}, A_{2,5}][A_{1,5}A_{1,2}A_{1,3}, A_{1,4}][A_{1,2}A_{1,3}A_{1,4}, A_{1,5}]  \notag\\
& =  [A_{1,2}, A_{2,5}][A_{2,3}, A_{2,5}][A_{2,4}, A_{2,5}][A_{1,5}, A_{1,4}][A_{1,2}, A_{1,4}][A_{1,3}, A_{1,4}] \cdot  \notag\\
& \cdot [A_{1,2}, A_{1,5}][A_{1,3}, A_{1,5}][A_{1,4}, A_{1,5}] \notag\\
& =  [A_{1,2}, A_{2,5}][A_{2,3}, A_{3,5}]^{-1}[A_{2,4}, A_{4,5}]^{-1}[A_{1,4}, A_{1,5}]^{-1}[A_{1,2}, A_{2,4}]^{-1}[A_{1,3}, A_{3,4}]^{-1} \cdot  \notag\\
& \cdot [A_{1,2}, A_{2,5}]^{-1}[A_{1,3}, A_{3,5}]^{-1}[A_{1,4}, A_{1,5}] \notag\\
& = [A_{2,3}, A_{3,5}]^{-1}[A_{2,4}, A_{4,5}]^{-1}[A_{1,2}, A_{2,4}]^{-1}[A_{1,3}, A_{3,4}]^{-1}[A_{1,3}, A_{3,5}]^{-1} \notag\\
& = b_1^{-1}b_2^{-1}b_3^{-1}b_4^{-1}b_5^{-1}. \notag
\end{align}
By \req{eq15}, we have $\delta_5=A_{3,5}A_{4,5}\alpha_5^{-1}$, and using \req{actionalpha5} and \repr{nil}(\ref{it:relnsalphaa})(\ref{it:relnsalpha2}), as well as the fact that $\alpha_{5}^{-1} c_{1}\alpha_{5}= c_{2}$, in $P_5/\Gamma_3(P_5)$, we have: 
\begin{align}\label{eq:delta55}
\delta_5^5 & =  (A_{3,5}A_{4,5}\alpha_5^{-1})^5\notag\\
& =  A_{3,5}A_{4,5}(\alpha_5^{-1}A_{3,5}A_{4,5}\alpha_5) (\alpha_5^{-2}A_{3,5}A_{4,5}\alpha_5^2) (\alpha_5^{-3}A_{3,5}A_{4,5}\alpha_5^3) (\alpha_5^{-4}A_{3,5}A_{4,5}\alpha_5^4)\alpha_5^{-5}\notag\\
&= A_{3,5}A_{4,5}A_{2,4}A_{3,4}A_{1,3}A_{2,3} c_{1} A_{2,5}A_{1,2} (\alpha_{5}^{-1} c_{1} A_{2,5}\alpha_{5}) c_{3} A_{1,5}\alpha_5^{-5}\notag\\
& =  c_1c_2c_3A_{3,5}A_{4,5}A_{2,4}A_{3,4}A_{1,3}A_{2,3}A_{2,5}\underline{A_{1,2}}A_{1,4}A_{1,5}\alpha_5^{-5}\notag\\
& =  c_1c_2c_3a_2^{-1}a_1^{-1}a_1b_1^{-1}A_{1,2}A_{3,5}A_{4,5}A_{2,4}A_{3,4}\underline{A_{1,3}}A_{2,3}A_{2,5}A_{1,4}A_{1,5}\alpha_5^{-5}\notag\\
& =  c_1c_2c_3a_2^{-1}b_1^{-1}b_4^{-1}b_2^{-1}A_{1,2}A_{1,3}A_{3,5}A_{4,5}A_{2,4}A_{3,4}A_{2,3}A_{2,5}\underline{A_{1,4}}A_{1,5}\alpha_5^{-5}\notag\\
& =  c_1c_2c_3a_2^{-1}b_1^{-1}b_4^{-1}b_2^{-1}b_4b_1a_3^{-1}A_{1,2}A_{1,3}A_{1,4}A_{3,5}A_{4,5}A_{2,4}A_{3,4}A_{2,3}A_{2,5}\underline{A_{1,5}}\alpha_5^{-5}\notag\\
& =  c_1c_2c_3a_2^{-1}b_2^{-1}a_3^{-1}a_2a_3b_2A_{1,2}A_{1,3}A_{1,4}A_{1,5}A_{3,5}A_{4,5}A_{2,4}A_{3,4}\underline{A_{2,3}}A_{2,5}\alpha_5^{-5}\notag\\
& =  c_1c_2c_3a_5^{-1}a_5b_5^{-1}A_{1,2}A_{1,3}A_{1,4}A_{1,5}A_{2,3}A_{3,5}A_{4,5}\underline{A_{2,4}}A_{3,4}A_{2,5}\alpha_5^{-5}\notag\\
& =  c_1c_2c_3b_5^{-1}b_3^{-1}A_{1,2}A_{1,3}A_{1,4}A_{1,5}A_{2,3}A_{2,4}A_{3,5}A_{4,5}A_{3,4}\underline{A_{2,5}}\alpha_5^{-5}\notag\\
& =  c_1c_2c_3b_5^{-1}b_3^{-1}b_3b_5A_{1,2}A_{1,3}A_{1,4}A_{1,5}A_{2,3}A_{2,4}A_{2,5}A_{3,5}A_{4,5}\underline{A_{3,4}}\alpha_5^{-5}\notag\\
& =  c_1c_2c_3a_4^{-1}a_4A_{1,2}A_{1,3}A_{1,4}A_{1,5}A_{2,3}A_{2,4}A_{2,5}A_{3,4}A_{3,5}A_{4,5}\alpha_5^{-5}\notag\\
& =  c_1c_2c_3\alpha_5^5\alpha_5^{-5} =  b_1^{-1}b_2^{-1}b_3^{-1}b_4^{-1}b_5^{-1}.
\end{align}
The idea of the above computation is first to eliminate all of the terms involving $\alpha_{5}$ using \req{actionalpha5}, and then to move each of the underlined terms to the left one-by-one in order to create the word $A_{1,2}A_{1,3}A_{1,4}A_{1,5}A_{2,3}A_{2,4}A_{2,5}A_{3,4}A_{3,5}A_{4,5}$, which we know to be the full twist braid $\alpha_{5}^{5}$ in $P_{5}$, and so in $P_5/\Gamma_3(P_5)$. In doing so, we introduce basis elements of $\Gamma_2(P_5)/\Gamma_3(P_5)$ given by~\reqref{10elmt}, perhaps written in one of the forms of \req{alphaijk}. Note that the result is coherent with that of \repr{deltan}, and with the notation of that proposition, we have $m$. 
Setting $\theta=\prod_{i=1}^{2} \prod_{j=1}^{5}\, b_{i,j}^{r_{i,j}}$, where $b_{1,j}=a_{j}$, $b_{2,j}=b_{j}$ and $r_{i,j}\in \Z$ for all $i=1,2$ and $j=1,\ldots,5$, using the notation of \repr{deltan}(\ref{it:deltanb}), and applying the construction of \reth{ntorsion}, by~\reqref{delta55} and~\reqref{compat}, we have $m_{1,j}=0$, $m_{2,j}=-1$, and $\theta\delta_{5}$ is of order $5$ if and only if $\sum_{j=1}^{5} r_{1,j}=0$ and $\sum_{j=1}^{5} r_{2,j}=1$. So to obtain an explicit element $\theta$, it suffices to pick any integers satisfying these two relations. For example, if $r_{2,1}=1$ and $r_{2,2}=\cdots=r_{2,5}=r_{1,1}=\cdots=r_{1,5}=0$ then the element $b_1\delta_5=[A_{1,2},A_{2,4}](\sigma_4\sigma_3\sigma_2^{-1}\sigma_1^{-1})$ is of order $5$ in $B_5/\Gamma_3(P_5)$.

\begin{rem}\label{rem:b3gamma4} The 
study of the quotients $B_n/\Gamma_k(P_n)$ for $k>3$ does not appear to be an easy problem. Our approach requires a description of a basis of $\Gamma_k(P_n)/\Gamma_k(P_n)$. For example, if $n=3$ and $k=4$, a long and arduous calculation show that a basis of the group $\Gamma_3(P_3)/\Gamma_4(P_3)$, which is free Abelian of rank $2$ by~\rerem{rank1}(\ref{it:rank1b}), is given by $\brak{[[A_{1,2},A_{2,3}], A_{1,3}], [[A_{2,3},A_{1,3}], A_{1,2}]}$.
Since $\sn[3]$ has $4$ subgroups (up to isomorphism), arguing as in \rerem{S4}, we may exhibit $4$ non-isomorphic almost-Bieberbach subgroups of $B_3/\Gamma_4(P_3)$ of dimension $6$ of the form $\sigma^{-1}(H)/\Gamma_4(P_3)$ with holonomy group $H$, where $H$ is a subgroup of $\sn[3]$ and $\sigma$ is as in~\reqref{sespn}.
\end{rem}


\begin{thebibliography}{GGO2}

{\small

\bibitem[AIK]{AIK}T.~Arakawa, T.~Ibukiyama and M.~Kaneko, Bernoulli numbers and zeta functions, with an appendix by D.~Zagier, \emph{Springer Monographs in Mathematics}, Tokyo (2014).


\bibitem[Au]{Au} L.~Auslander, Bieberbach's theorems on space groups and discrete uniform subgroups of Lie groups. \emph{Ann.\ Math.} \textbf{71} (1960),  579--590.





\bibitem[BM]{MaV} V.~Beck and I.~Marin,  Torsion subgroups of quasi-abelianized braid groups, preprint, \url{arXiv:1709.01853}. 

\bibitem[Ch]{Charlap} L.~Charlap, Bieberbach groups and flat manifolds, Springer-Verlag, New York (1986).

\bibitem[CS]{CS} D.~C.~Cohen and  A.~I.~Suciu,  The Chen groups of the pure braid group, The Cech centennial (Boston, MA, 1993), Amer.\ Math.\ Soc., Providence, RI, \emph{Contemp.\ Math.} \textbf{181} (1995), 45--64.



\bibitem[De]{Dekimpe} K.~Dekimpe, Almost-Bieberbach groups: affine and polynomial structures, Springer Lecture Notes in Mathematics~\textbf{1639}, Berlin (1996). 

\bibitem[DD]{DD} K.~Dekimpe and J.~Der\'e, Existence of Anosov diffeomorphisms on infra-nilmanifolds modeled on free nilpotent lie groups, \emph{Top.\ Methods Nonlinear Analysis} \textbf{46} (2014), 165--189.


\bibitem[FR1]{FRinv} M.~Falk and R.~Randell, The lower central series of a fiber-type arrangement. \emph{Invent.\ Math.} \textbf{82} (1985), 77--88.

\bibitem[FR2]{FR} M.~Falk and R.~Randell, Pure braid groups and products of free groups, \emph{Contemp.\ Math.} \textbf{78} (1988), 217--228.

\bibitem[GPS]{GPS} A.~Gasior, N.~Petrosyan and A.~Szczepański, Spin structures on almost-flat manifolds, \emph{Algebr.\ Geom.\ Topol.} \textbf{16} (2016), 783--796. 



\bibitem[GGO1]{GGO} D.~L.~Gon\c{c}alves, J.~Guaschi and O.~Ocampo, A quotient of the Artin braid groups related to crystallographic groups, \emph{J.~Algebra} \textbf{474} (2017), 393--423. 

\bibitem[GGO2]{GGO1}   D.~L.~Gon\c{c}alves, J.~Guaschi and O.~Ocampo,  Embeddings of finite groups in $B_n/\Gamma_k(P_n)$ for $k=2,3$, preprint,  2018. 

\bibitem[Gr]{Gromov} M.~Gromov, Almost flat manifolds, \emph{J.\ Differential Geom.} \textbf{13} (1978), 231--241.

\bibitem[Hal]{Hal} M.~Hall, The theory of groups, Macmillan, New York, 1959.

\bibitem[Han]{Ha} V.~L.~Hansen, Braids and coverings: selected topics, London Math.\ Soc.\ Student Text~\textbf{18}, Cambridge University Press, 1989.

\bibitem[Ho]{Ho} M.~Hoffman, An invariant of finite Abelian groups, \emph{Amer.\ Math.\ Monthly} \textbf{94} (1987), 664--666.


\bibitem[Jo]{Johnson} D.~L.~Johnson, Presentation of groups, LMS Lecture Notes~\textbf{22} (1976), Cambridge University Press.

\bibitem[Ko]{Ko} T.~Kohno, S\'erie de Poincar\'e-Koszul associ\'ee aux groupes de tresses pures, \emph{Invent.\ Math.} \textbf{82} (1985), 57--75.


\bibitem[LVW]{LVW} J.~Y.~Li, V.~Vershinin and J.~Wu, Brunnian braids and Lie algebras, \emph{J.\ Algebra} \textbf{439} (2015), 270--293.





\bibitem[MKS]{MKS} W.~Magnus, A.~Karrass and D.~Solitar, Combinatorial group theory~--~presentations of groups in terms of generators and relations, Dover Publications Inc., New York (1976).


\bibitem[Ma]{Ma} I.~Marin, Crystallographic groups and flat manifolds from complex reflection groups, \emph{Geom.\ Dedicata} \textbf{182} (2016), 233--247.


\bibitem[MK]{MK} K.~Murasugi and B.~I.~Kurpita,  A study of braids, Mathematics and its Applications~\textbf{484}, Kluwer Academic Publishers, Dordrecht, 1999.

\bibitem[N]{N} N.~Nielsen, Trait\'e \'el\'ementaire des nombres de Bernoulli, Gauthier-Villars, Paris (1923).






\bibitem[Ru]{Ruh} E.~Ruh, Almost flat manifolds, \emph{J.~Diff.\ Geom.} \textbf{17} (1982), 1--14.







\bibitem[Wi]{Wi} E.~Witmer, The sums of powers of integers, \emph{Amer.\ Math.\ Monthly} \textbf{42} (1935), 540--548.

\bibitem[Wo]{Wolf} J.~A.~Wolf, Spaces of constant curvature, sixth edition, AMS Chelsea Publishing, vol.~\textbf{372}, 2011.


}

\end{thebibliography}
\end{document}